%% This is file `elsarticle-template-1a-num.tex',
%%
%% Copyright 2009 Elsevier Ltd
%%
%% This file is part of the 'Elsarticle Bundle'.
%% ---------------------------------------------
%%
%% It may be distributed under the conditions of the LaTeX Project Public
%% License, either version 1.2 of this license or (at your option) any
%% later version.  The latest version of this license is in
%%    http://www.latex-project.org/lppl.txt
%% and version 1.2 or later is part of all distributions of LaTeX
%% version 1999/12/01 or later.
%%
%% The list of all files belonging to the 'Elsarticle Bundle' is
%% given in the file `manifest.txt'.
%%
%% Template article for Elsevier's document class `elsarticle'
%% with numbered style bibliographic references
%%
%% $Id: elsarticle-template-1a-num.tex 151 2009-10-08 05:18:25Z rishi $
%% $URL: http://lenova.river-valley.com/svn/elsbst/trunk/elsarticle-template-1a-num.tex $
%%
%%\documentclass[preprint,12pt]{elsarticle}

%% Use the option review to obtain double line spacing
%% \documentclass[preprint,review,12pt]{elsarticle}

%% Use the options 1p,twocolumn; 3p; 3p,twocolumn; 5p; or 5p,twocolumn
%% for a journal layout:
\documentclass[final,1p,times]{elsarticle}
%% \documentclass[final,1p,times,twocolumn]{elsarticle}
%%\documentclass[final,3p,times]{elsarticle}
%% \documentclass[final,3p,times,twocolumn]{elsarticle}
%% \documentclass[final,5p,times]{elsarticle}
%% \documentclass[final,5p,times,twocolumn]{elsarticle}

%% if you use PostScript figures in your article
%% use the graphics package for simple commands
%% \usepackage{graphics}
%% or use the graphicx package for more complicated commands
%% \usepackage{graphicx}
%% or use the epsfig package if you prefer to use the old commands
%% \usepackage{epsfig}

%% The amssymb package provides various useful mathematical symbols
%%\usepackage{amssymb}
%% The amsthm package provides extended theorem environments
%% \usepackage{amsthm}
%%%%%%%%%%%%%%%%%%%%%%%%%%%%%%%%%%%%%%%%%%%%%%%%%%%%%%%%%%%%%%%%%%%%%%%%%%%%%%%%%%%%%%%%%%%%%%%%%%%%%%%%%%%%%%%%%%%%%%%%%%%%%%%%%%
%\usepackage{amscd,bbm,amsmath,amsthm,graphicx,amssymb,mathrsfs,latexsym,dsfont,amsfonts,paralist,epsfig,graphics,exscale}
\usepackage{amsthm,amsmath,amssymb,amsfonts,graphicx,graphics,latexsym,exscale,cmmib57,dsfont,bbm,amscd,ulem}
\usepackage{bbm}
\usepackage{mathrsfs}
%%%%%%%%%%%%%%%%%%%%%%%%%%%%%%
\usepackage{color,soul}
\usepackage[colorlinks=true]{hyperref}
%%%%%%%%%%%%%%%%%%%%%%%%%%%%%%%%%%%%%%%%%%%%%%%%%%%%%%%%%%%%%%%%%%%%%%%%%%%%%%%%%%%%%%%%%%%%%%%%%%%%%%%%%%%%%%%%%%%%%%%%%%%%%%%%%%%
%% The lineno packages adds line numbers. Start line numbering with
%% \begin{linenumbers}, end it with \end{linenumbers}. Or switch it on
%% for the whole article with \linenumbers after \end{frontmatter}.
%% \usepackage{lineno}

%% natbib.sty is loaded by default. However, natbib options can be
%% provided with \biboptions{...} command. Following options are
%% valid:

%%   round  -  round parentheses are used (default)
%%   square -  square brackets are used   [option]
%%   curly  -  curly braces are used      {option}
%%   angle  -  angle brackets are used    <option>
%%   semicolon  -  multiple citations separated by semi-colon
%%   colon  - same as semicolon, an earlier confusion
%%   comma  -  separated by comma
%%   numbers-  selects numerical citations
%%   super  -  numerical citations as superscripts
%%   sort   -  sorts multiple citations according to order in ref. list
%%   sort&compress   -  like sort, but also compresses numerical citations
%%   compress - compresses without sorting
%%
%% \biboptions{comma,round}

% \biboptions{}
%%%%%%%%%%%%%%%%%%%%%%%%%%%%%%%%%%%%%%%%%%%%%%%%%%%%%%%%%%%%%%
\frenchspacing
%%%%%%%%%%%%%%%%%%%%%%%%%%%
\newcommand{\bP}{\boldsymbol{P}}
\newcommand{\p}{\boldsymbol{p}}
\newcommand{\A}{\boldsymbol{A}}
\newcommand{\B}{\boldsymbol{B}}
\newcommand{\blambda}{\boldsymbol{\lambda}}
\newcommand{\bK}{\boldsymbol{K}}

\newcommand{\bC}{\boldsymbol{C}}
\newcommand{\bS}{\boldsymbol{S}}
\newcommand{\bX}{\boldsymbol{X}}
\newcommand{\bY}{\boldsymbol{Y}}
\newcommand{\bZ}{\boldsymbol{Z}}

\newcommand{\bgamma}{\boldsymbol{\gamma}}
\newcommand{\bd}{\boldsymbol{d}}
\newcommand{\ba}{\boldsymbol{a}}

\newcommand{\w}{\boldsymbol{w}}
\newcommand{\bta}{\boldsymbol{\eta}}
\newcommand{\bT}{\boldsymbol{T}}
\newcommand{\bcdot}{\boldsymbol{\cdot}}

\newcommand{\bnu}{\boldsymbol{\nu}}
\newcommand{\brho}{\boldsymbol{\rho}}
\newcommand{\bxi}{\boldsymbol{\xi}}

\newcommand{\bea}{\begin{eqnarray}}
\newcommand{\eea}{\end{eqnarray}}
\newcommand{\bean}{\begin{eqnarray*}}
\newcommand{\eean}{\end{eqnarray*}}
%%%%%%%%%%%%%%%%%%%%%%%%%%%
\newtheorem*{conj}{Spectral finiteness}
\newtheorem*{conjecture}{Conjecture}

\newtheorem*{Dirichlet}{Dirichlet's Theorem}

\newtheorem{theorem}{Theorem}[section]
\newtheorem{lemma}[theorem]{Lemma}
\newtheorem{cor}[theorem]{Corollary}

\theoremstyle{definition}
\newtheorem{defn}[theorem]{Definition}

\newtheorem{remark}[theorem]{Remark}

\numberwithin{equation}{section}

%%%%%%%%%%%%%%%%%%%%%%%%%%%%%%%%%%%%%%%%%%%%%%%%%%%%%%%%%%%%%%
\journal{The Journal of the Franklin Institute}%%%
%%\journal{Linear Algebra and its Applications}%

\begin{document}

\begin{frontmatter}

\title{Robust periodic stability implies uniform exponential stability of Markovian jump linear systems and random linear ordinary differential equations}

%    Information for first author
\author{Xiongping Dai}
\ead{xpdai@nju.edu.cn}
%    Information for second author
%\author[Kozyakin]{Victor Kozyakin}
%\ead{kozyakin@iitp.ru}
\address{Department of Mathematics, Nanjing University, Nanjing 210093, People's Republic of China}
%\address[Kozyakin]{Institute for Information Transmission Problems, Russian Academy of Sciences, Bolshoj Karetny lane, 19, GSP-4, Moscow, 127994 Russia}

%%%%%%%%%%%%%%%%%%%%%%%%%%%%%%%%%%%%%%%%%%%%%%%%%
\begin{abstract}
In this paper, we mainly show the following two statements.
\begin{enumerate}
\item[(1)] A discrete-time Markovian jump linear system is uniformly exponentially stable if and only if it is robustly periodically stable, by using a Gel'fand-Berger-Wang formula proved here.
\item[(2)] A random linear ODE driven by a semiflow with closing by periodic orbits property is uniformly exponentially stable if and only if it is robustly periodically stable, by using Shantao Liao's perturbation technique and the semi-uniform ergodic theorems.
\end{enumerate}
Our proofs involve ergodic theory in both of the above two cases. In addition, counterexamples are constructed to the robustness condition and to spectral finiteness of linear cocycle.
\end{abstract}

\begin{keyword}
Markovian jump linear system\sep random linear ODE\sep exponential stability\sep robust periodic stability\sep Gel'fand-Berger-Wang formula\sep spectral finiteness.

\medskip
\MSC[2010] Primary 37H05\sep 34D23; Secondary 93D09\sep 93C30\sep 15A18\sep 37N35.
\end{keyword}

\end{frontmatter}

%%%%%%%%%%%%%%%%%%%%%%%%%%%%%%%%%%%%%%%%%%
%%%%%%%%%%%%%%%%%%%%%%%%%%%%%%%%%%%%%%%%%%
\section{Introduction}\label{sec1}%

In this paper, we study the absolute/uniform exponential stability of a discrete-time Markovian jump linear system and a random linear ordinary differential equation driven by a semiflow with closing by periodic orbits property, using ergodic theory and Shantao Liao's perturbation technique developed in the differentiable dynamical systems.

Let $d\ge1$ be an arbitrary integer, $\|\cdot\|$ an arbitrarily given vector norm on $\mathbb{R}^d$, $\mathbb{N}=\{1,2,\dotsc\}$, and $\mathbb{Z}=\{0,\pm1,\pm2,\dotsc\}$. Throughout this paper, by $\mathbb{R}^{d\times d}$ (resp.~$\mathrm{GL}(d,\mathbb{R})$) we denote the usual topological spaces of all real $d$-by-$d$ matrices (resp. nonsingular) with the usual matrix norm $\pmb{\|}\cdot\pmb{\|}$ compatible with $\|\cdot\|$ on $\mathbb{R}^d$. By $\mathrm{C}(X,\mathbb{R}^{d\times d})$ (resp. $\mathrm{C}(X,\mathrm{GL}(d,\mathbb{R})$), we mean the space of all continuous functions from a compact metric space $X$ into $\mathbb{R}^{d\times d}$ (resp.~$\mathrm{GL}(d,\mathbb{R})$) endowed with the uniform convergence topology. Then $f_n\to f$ in $\mathrm{C}(X,\mathbb{R}^{d\times d})$ (resp.~$\mathrm{C}(X,\mathrm{GL}(d,\mathbb{R}))$) if and only if $\sup_{x\in X}\pmb{\|}f_n(x)-f(x)\pmb{\|}\to 0$.

\subsection{Markovian jump linear system}\label{sec1.1}%%%
Given an arbitrary integer $K\ge2$, let $\A=\{A_1,\dotsc,A_K\}\subset\mathbb{R}^{d\times d}$ be an arbitrarily given ordered set thought of as a discrete topological space. Let $(\Omega,\mathscr{F},\mathbb{P})$ be a probability space and $\bxi=\big{(}\xi_n\colon\Omega\rightarrow\A\big{)}_{n\ge1}$ be a time-homogeneous Markovian chain with state space $\A$ and obeying the Markov transition probability matrix
\begin{equation*}
\bP=(p_{ij})_{K\times K}\quad (\textrm{i.e. } \mathbb{P}\{\xi_{n+1}=j\,|\,\xi_n=i\}=p_{ij},\ 1\le i,j\le K,\quad\forall n\ge1).
\end{equation*}
Let there be fixed an initial probability distribution
$$\p=(p_1,\dotsc,p_K)\quad (\textrm{i.e. } \mathbb{P}\{\xi_1=A_k\}=p_k\textrm{ for }1\le k\le K)$$
such that $\p\bP=\p$. We note that such $\p$ is always existent from the Perron-Frobenius theorem.

Let $\mathbb{A}=(\mathbb{A}_{ij})_{K\times K}$ be the matrix of zeros and ones defined by: $\mathbb{A}_{ij}=1$ if and only if $p_{ij}>0$, and
$\mathbb{A}_{ij}=0$ if and only if $p_{ij}=0$. By $\varSigma_\mathbb{A}^+$, we denote the standard one-sided symbolic space of finite-type consisting of all the one-sided infinite symbolic sequences
$\sigma\colon\mathbb{N}\rightarrow\{1,\dotsc,K\}$
satisfying the $\mathbb{A}$-constraint: $\mathbb{A}_{\sigma(n)\sigma(n+1)}=1$ for all $n\ge1$. We can obtain a natural probability distribution, called the ``$(\p,\bP)$-Markovian measure" and simply write as $\mu_{\p,\bP}$, on $\varSigma_\mathbb{A}^+$, which is such that for any $n\ge1$,
\begin{equation*}
\mu_{\p,\bP}([i_1,\dotsc,i_n]_{\mathbb{A}})=\begin{cases}
p_{i_1}& \textrm{if }n=1;\\
p_{i_1}p_{i_1i_2}\dotsm p_{i_{n-1}i_n}& \textrm{if }n\ge2,
\end{cases}
\end{equation*}
for all cylinder sets $[i_1,\dotsc,i_n]_{\mathbb{A}}:=\{\sigma\in\varSigma_\mathbb{A}^+\colon \sigma(1)=i_1,\dotsc,\sigma(n)=i_n\}\subset\varSigma_\mathbb{A}^+$.
Based on $\A$, this then naturally induces a discrete-time linear switching dynamical system:
\begin{equation}\label{eq1.1}
x_n=A_{\sigma(n)} x_{n-1},\quad x_0\in\mathbb{R}^d, n\ge1 \textrm{ and }\sigma\in\varSigma_\mathbb{A}^+.
\end{equation}
We can see that the Markovian chain $\bxi$ is $\mathbb{P}$-almost surely stable if and only if (\ref{eq1.1}) is $\mu_{\p,\bP}$-almost surely stable; see, e.g., \cite{DHX}.
Then we may identify the Markovian chain $\bxi$ on $(\Omega,\mathscr{F},\mathbb{P})$ valued in $\A$ with (\ref{eq1.1}) and call the later a Markovian jump linear system (MJLS for short).

From now on we fix such an $(0,1)$-matrix $\mathbb{A}=(\mathbb{A}_{ij})_{K\times K}$. The above MJLS $(\ref{eq1.1})$ is said to be \textit{absolutely exponentially stable}, provided that for every initial state $x_0\in\mathbb{R}^d$ and any switching law $\sigma\in\varSigma_\mathbb{A}^+$, the corresponding state orbit $\{x_n(x_0,\sigma)\}_{n=1}^\infty$ of $(\ref{eq1.1})$ is such that
\begin{equation*}
\limsup_{n\to\infty}\frac{1}{n}\log\|x_n(x_0,\sigma)\|<0.
\end{equation*}
Equivalently, one can find two constants $C>0$ and $0<\gamma<1$ so that
\begin{equation*}
\pmb{\|}A_{\sigma(n)}\dotsm A_{\sigma(1)}\pmb{\|}\le C\gamma^n\quad\forall n\ge1\textrm{ and }\sigma\in\varSigma_\mathbb{A}^+;
\end{equation*}
namely, the MJLS $(\ref{eq1.1})$ is \textit{uniformly exponentially stable}. This stability does not depend on the vector-norm $\|\cdot\|$ on $\mathbb{R}^d$ and its induced matrix-norm $\pmb{\|}\cdot\pmb{\|}$ on $\mathbb{R}^{d\times d}$ used here.

The stability issues, more precisely \textit{how to characterize the stability}, of such linear dynamical systems are very important from both the theoretical and practical viewpoints and have drawn a lot of attentions in the recent years; see, e.g., \cite{LM, Mic, Mar, SWMWK} and books \cite{Lib, SG}.

A base point $\sigma\in\varSigma_\mathbb{A}^+$ is called a \textit{periodic switching law of period $\pi$}, for some $\pi\ge1$, if it is of the form
$\sigma=(\uwave{i_1,\dotsc,i_{\pi}},\uwave{i_1,\dotsc,i_{\pi}},\dotsc)$,
where the constituent word $(i_1,\dotsc,i_{\pi})$, called the \textit{generator} of $\sigma$, is such that $\mathbb{A}_{i_ki_{k+1}}=1$ for $1\le k<\pi$ and $\mathbb{A}_{i_\pi i_1}=1$. It is completely determined by $\mathbb{A}$ and independent of $\A$.
For any $n\ge1$, by
$W_\textrm{per}^n(\mathbb{A})$ we mean the set of all $n$-length words $w=(w_1,\dotsc,w_n)$ in $\{1,\dotsc,K\}^n$ such that $\mathbb{A}_{w_kw_{k+1}}=1$ for $1\le k<n$ and $\mathbb{A}_{w_nw_1}=1$. Clearly, for any periodic switching law of period $\pi$ there exists a corresponding generator $w\in W_\textrm{per}^\pi(\mathbb{A})$ and vice versa.

For any square matrix $B\in\mathbb{R}^{d\times d}$, by $\rho(B)$ we mean the usual spectral radius of $B$; that is, if $\lambda_1,\dotsc,\lambda_r$ are all the distinct eigenvalues of $B$ then $\rho(B)=\max_{1\le i\le r}|\lambda_i|$.

It is a well-known fact that the judgement of the stability of (\ref{eq1.1}) governed by an aperiodic switching law $\sigma$ is by no means a trivial task in general; however, if $\sigma$ is periodic with the generator $(i_1,\dotsc,i_\pi)$, then this task becomes trivial at once by only computing $\rho(A_{i_\pi}\dotsm A_{i_1})$ because (\ref{eq1.1}) governed by $\sigma$ is stable if and only if $\rho(A_{i_\pi}\dotsm A_{i_1})<1$ from the well-known Gel'fand formula of a matrix
\begin{equation}\label{eq1.2}%%%
\rho(A)=\lim_{n\to\infty}\sqrt[n]{\pmb{\|}A^n\pmb{\|}}\quad\forall A\in\mathbb{R}^{d\times d}.
\end{equation}
Unfortunately, the periodic stability of (\ref{eq1.1}) itself cannot imply the uniform stability; see \cite{TB, BM, BTV, Koz, HMST, Dai-13} for counterexamples.
\textit{Under what conditions is a given system like $(\ref{eq1.1})$ stable?} Cf.~\cite[Problem~10.2]{BTT} and also \cite{Gur,SWMWK}. Except for elementary cases such as by the joint spectral radius of $\A$ being less than $1$ (cf.~\cite{Bar, Dai-LAA}), no satisfactory conditions are presently available for checking the stability of (\ref{eq1.1}); in fact the problem is open even in the case of matrices of dimension two (cf.~\cite[p.~305]{BTT}).

\subsubsection{Robust periodic stability of MJLS}
In the first part of this paper, we will study the absolute/uniform exponential stability of the MJLS $(\ref{eq1.1})$ under the following robustness condition:

\begin{defn}\label{def1.1}%%%
The MJLS $(\ref{eq1.1})$ is called \textit{robustly periodically stable}, provided that one can find an $\varepsilon>0$ such that if the ordered set $\B=\{B_1,\dotsc,B_K\}\subset\mathbb{R}^{d\times d}$ satisfies $\pmb{\|}A_k-B_k\pmb{\|}<\varepsilon$ for each $1\le k\le K$, then for any $n\ge1$ and any word $w\in W_\textrm{per}^n(\mathbb{A})$ there $\rho(B_{w_n}\dotsm B_{w_1})<1$.
\end{defn}

By considering $B_k=(1+\varepsilon/2)A_k$ for $1\le k\le K$, it is easily seen that if the MJLS $(\ref{eq1.1})$ is robustly periodically stable, then it is \textit{completely periodically stable}; i.e.,
\begin{equation}\label{eq1.3}%%%
\exists\,\bgamma<1\textrm{ such that }\rho(A_{w_n}\dotsm A_{w_1})\le\bgamma\;\forall w\in W_\textrm{per}^n(\mathbb{A})\textrm{ and }n\ge1.
\end{equation}
Indeed, we can gain more, see Lemma~\ref{lem1.3} below.

Then in the special but classical case where $(\ref{eq1.1})$ is free of any constraints (i.e. $\mathbb{A}_{ij}\equiv1$ for all $1\le i,j\le K$),
(\ref{eq1.1}) is robustly periodically stable if and only if (\ref{eq1.1}) is uniformly exponentially stable;
see, e.g., \cite{SWP, Dai-LAA}. However, in our present context with the nontrivial constraint $\mathbb{A}$, for this equivalence, there appears an essential obstruction for one could not employ Elsner's reduction theorem (\cite{Els, Dai-JMAA}) to guarantee the product boundedness, i.e. $\pmb{\|}A_{\sigma(m+n)}\dotsm A_{\sigma(m+1)}\pmb{\|}\le\beta$ uniformly for $n\ge1$ and $m\ge0$, for any $\sigma\in\varSigma_\mathbb{A}^+$, as done in \cite{Dai-LAA} for a case with additional condition; this is because the set of matrices
$\{A_{\sigma(m+n)}\dotsm A_{\sigma(m+1)}\,|\,\sigma\in\varSigma_\mathbb{A}^+, m\ge0, n\ge1\}$
is by no means a semigroup under the matrix multiplication in general for our situation. For counterexamples, see Theorem~\ref{thm4.1} proved in Section~\ref{sec4.2}.

However, motivated by Liao~\cite{Liao-CAM}, Aoki~\cite{Aok}, Hayashi~\cite{Hay-ETDS}, Gan and Wen~\cite{GW}, and Dai~\cite{Dai-SCM} in the theory of differentiable dynamical systems, we can obtain the following result even through without Elsner's reduction theorem.

\begin{theorem}\label{thm1.2}%%%
For any $\A=\{A_1,\dotsc,A_K\}\subset\mathbb{R}^{d\times d}$ and any $K$-by-$K$ $\{0,1\}$-matrix $\mathbb{A}$, the MJLS $(\ref{eq1.1})$ is uniformly exponentially stable if and only if it is robustly periodically stable.
\end{theorem}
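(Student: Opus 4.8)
The plan is to prove the nontrivial direction: robust periodic stability implies uniform exponential stability. The converse is routine, since uniform exponential stability of $\A$ persists under small perturbations and in particular forces $\rho(B_{w_n}\dotsm B_{w_1}) < 1$ for every periodic word. For the forward direction, I would work on the symbolic space $\varSigma_{\mathbb{A}}^+$ equipped with the left shift $\theta$, and study the linear cocycle $\Phi(\sigma,n) = A_{\sigma(n)}\dotsm A_{\sigma(1)}$ together with its top Lyapunov exponent relative to each shift-invariant ergodic probability measure $\mu$ on $\varSigma_{\mathbb{A}}^+$, namely $\lambda(\mu) = \lim_{n\to\infty}\frac1n\int\log\pmb{\|}\Phi(\sigma,n)\pmb{\|}\,\mu(d\sigma)$ (Kingman's subadditive ergodic theorem). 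The key reduction is the Gel'fand–Berger–Wang type formula advertised in the abstract, which should say that the uniform exponential growth rate $\hat\lambda(\A) := \lim_n \frac1n\log\sup_{\sigma}\pmb{\|}\Phi(\sigma,n)\pmb{\|}$ equals $\sup_\mu \lambda(\mu)$ over ergodic $\mu$, and moreover equals the supremum of the ``periodic exponents'' $\frac{1}{|w|}\log\rho(A_{w_n}\dotsm A_{w_1})$ over all periodic words $w$. Granting such a formula, robust periodic stability gives via Lemma~\ref{lem1.3} (the strengthening of \eqref{eq1.3}) a uniform bound strictly below $1$ on all periodic spectral radii, hence $\hat\lambda(\A) < 0$, which is exactly uniform exponential stability.

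The heart of the matter is therefore establishing that $\hat\lambda(\A)$ is approximated by periodic data. Here I would follow the Liao perturbation philosophy cited in the excerpt. Suppose, for contradiction, that $\hat\lambda(\A) \ge 0$ while all periodic products have spectral radius bounded by some $\bgamma<1$ even after $\varepsilon$-perturbation. By the variational/ergodic characterization there is an ergodic $\mu$ with $\lambda(\mu)$ arbitrarily close to $\hat\lambda(\A)$, so $\lambda(\mu)\ge-\delta$ for small $\delta$. Using Oseledets' multiplicative ergodic theorem, pick a $\mu$-generic point $\sigma$ with a well-defined Oseledets splitting and top exponent $\ge -\delta$. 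The goal is to approximate this trajectory by a periodic switching law $w$ whose product $A_{w_n}\dotsm A_{w_1}$ has a perturbation with spectral radius forced close to $e^{n\lambda(\mu)}$, contradicting the uniform bound $\bgamma$ once $n$ is large and $\varepsilon$-perturbations are allowed. Concretely: using recurrence of $\sigma$ under $\theta$ (Poincaré recurrence) together with the $\mathbb{A}$-admissibility constraint, one closes up a long orbit segment of $\sigma$ into an admissible periodic word $w$ of comparable length; then Liao's perturbation lemma allows a controlled modification of the matrices along this closed segment (an $\varepsilon$-perturbation $\B$ of $\A$) so that the resulting periodic product has its spectral radius comparable to the norm growth $\pmb{\|}\Phi(\sigma,n)\pmb{\|}\approx e^{n\lambda(\mu)}$, i.e.\ $\rho(B_{w_n}\dotsm B_{w_1}) \ge e^{n(\lambda(\mu)-o(1))}$. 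Taking $\delta$ small makes this $\ge \bgamma^{?}$ eventually exceed any fixed $\bgamma<1$ as $n\to\infty$, the desired contradiction.

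I expect the main obstacle to be exactly this perturbative closing step, for two reasons. First, the $\mathbb{A}$-constraint: one cannot freely splice orbit segments, so the closing-up argument must respect admissibility — this is presumably why the excerpt emphasizes that the relevant matrix set is "by no means a semigroup," blocking the easier Elsner-reduction route used in the unconstrained case. One must instead invoke recurrence of the subshift of finite type to return to a symbol from which closure is legal, while keeping the segment long enough that the accumulated norm growth is genuine. Second, turning a norm lower bound on a product into a spectral-radius lower bound is not automatic (a product can be large in norm yet have small spectral radius), and this is precisely where Liao's perturbation technique earns its keep: a small, well-chosen perturbation concentrated near the endpoints of the closed segment aligns the most-expanded direction with its image, converting $\pmb{\|}\Phi\pmb{\|}$-growth into $\rho$-growth. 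Making the perturbation size uniformly $<\varepsilon$ while controlling the loss in the exponent — and doing so uniformly over the (finitely many) matrices $A_1,\dots,A_K$ using compactness of $\varSigma_{\mathbb{A}}^+$ and continuity of the cocycle — is the delicate quantitative core. Once that lemma is in hand, the semi-uniform ergodic theorems supply the clean passage from ``for every ergodic $\mu$, $\lambda(\mu)<0$'' (which we just derived) back to the genuinely uniform bound $\hat\lambda(\A)<0$, completing the proof.
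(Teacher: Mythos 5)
Your high-level skeleton is right — the nontrivial direction, the reduction to a Gel'fand--Berger--Wang-type formula $\brho(\A,\mathbb{A})=\limsup_n\max_w\sqrt[n]{\rho(A_{w_n}\dotsm A_{w_1})}$, the role of Lemma~\ref{lem1.3}, and the awareness that passing from $\pmb{\|}\Phi\pmb{\|}$-growth to $\rho$-growth on a closed orbit is the crux. But your proposed mechanism for that crux, a Liao-type perturbation applied to the MJLS, is precisely what the paper explains does \emph{not} work in this setting, and for a structural reason, not a technical one. In Definition~\ref{def1.1} an admissible perturbation consists of $K$ matrices $B_1,\dotsc,B_K$ with $\pmb{\|}A_k-B_k\pmb{\|}<\varepsilon$, applied uniformly: every occurrence of symbol $k$ along the periodic word must receive the \emph{same} perturbation $B_k$. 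Liao's perturbation lemma (Lemma~\ref{lem3.1R}) by contrast constructs a time-varying correction $B_\natural(t)$ that is different at different positions along the orbit arc; equivalently, in the discrete symbolic picture it produces a perturbation in $\mathrm{C}(\varSigma_{\mathbb{A}}^+,\mathbb{R}^{d\times d})$, not one that factors through the symbol map $\sigma\mapsto\sigma(1)$ into $\mathrm{C}(\{1,\dotsc,K\},\mathbb{R}^{d\times d})$. Along any long admissible word over a finite alphabet, each symbol recurs many times, so the Liao perturbation one would want at step $i$ generally conflicts with the one wanted at a later step $j>i$ where $w_i=w_j$. The paper's Section~\ref{sec1.2.2} states this explicitly as the reason Theorem~\ref{thm1.2} is \emph{not} simply the discrete version of Theorem~\ref{thm1.8} and why a separate unperturbed formula (Theorem~\ref{thm1.4}) is needed.

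What the paper actually does to fill this gap is prove the approximation of the top Lyapunov exponent by periodic data for the \emph{original} cocycle, with no perturbation at all, via invariant cones. Using the Froyland--Lloyd--Quas refinement of Oseledets (Lemma~\ref{lem2.3R}), Poincar\'e recurrence and Lusin to obtain a Pesin-like set $\varLambda_\mu$ (Lemma~\ref{lem2.4R}), and the subshift closing lemma (Lemma~\ref{lem2.5R}) to produce an $\mathbb{A}$-admissible periodic word from a long recurrent segment, one shows (Lemma~\ref{lem2.6R}, following Morris) that the cocycle maps a cone around the fast Oseledets direction into itself at return times and expands it by $e^{n(\lambda_1(\mu)-3\epsilon)}$; this directly forces $\rho(A_{\sigma(n_k-1)}\dotsm A_{\sigma(0)})\ge e^{n_k(\lambda_1(\mu)-3\epsilon)}$ without any perturbation, because a matrix that preserves a cone and expands uniformly on it has spectral radius at least that expansion rate. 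The upper bound $\blambda(\sigma_k)\le\blambda(\mu)+3\epsilon$ comes from upper semicontinuity of Lyapunov exponents plus weak-$*$ approximation of $\mu$ by periodic measures. This yields Theorem~\ref{thm2.1R} and hence Theorem~\ref{thm1.4}, and Theorem~\ref{thm1.2} then follows exactly as in your first paragraph: Lemma~\ref{lem1.3} gives the dilation bound \eqref{eq1.4}, the formula gives $\brho(\A,\mathbb{A})<1$, and uniform exponential stability follows. You should replace your Liao-perturbation step with this cone argument; as written, the central step of your proposal cannot be carried out under the MJLS perturbation constraint.
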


This positively answers the unsolved problem \cite[Problem~10.2]{BTT} from the viewpoint of perturbation theory.

\subsubsection{Outline of the proof of Theorem~\ref{thm1.2}}\label{sec1.1.2}%%%
The fact that for arbitrary switching (discrete-time, without the $\mathbb{A}$-constraint) systems the uniform exponential stability is equivalent to the robust periodic stability is well known (and in fact it is equivalent to the continuity of the joint spectral radius of $\A$). However our new element is in considering systems steered by  switching laws which are performed in accordance to restrictions imposed by the matrix $\mathbb{A}$.

The ``only if'' part of Theorem~\ref{thm1.2} holds trivially. So, we in fact need only prove the ``if'' part. For that, we first obtain the following simple result, which is similar to \cite[Theorem~1]{Fra}.

\begin{lemma}\label{lem1.3}%%%
If the MJLS $(\ref{eq1.1})$ is robustly periodically stable, then it is ``dilation'' periodically stable in the sense that there exists an $\varepsilon>0$ such that for any real number $\alpha\in[1,1+\varepsilon)$ and any $n\ge1$,
\begin{equation}\label{eq1.4}%%%
\alpha^n\rho(A_{w_n}\dotsm A_{w_1})<1\quad \forall w\in W_{\mathrm{per}}^n(\mathbb{A})\textrm{ and }n\ge1.
\end{equation}
\end{lemma}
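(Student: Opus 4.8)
The plan is to exploit the fact that robust periodic stability gives us a uniform perturbation radius $\varepsilon_0>0$ that works simultaneously for \emph{all} finite periodic words, and then to show that scaling all the matrices by a factor slightly larger than $1$ is itself a small perturbation — uniformly in the dimension of the product — provided the matrices stay bounded. First I would invoke Definition~\ref{def1.1} to fix $\varepsilon_0>0$ such that every $\B=\{B_1,\dots,B_K\}$ with $\pmb{\|}A_k-B_k\pmb{\|}<\varepsilon_0$ has $\rho(B_{w_n}\dotsm B_{w_1})<1$ for all $n\ge1$ and all $w\in W_{\mathrm{per}}^n(\mathbb{A})$. Set $M=\max_{1\le k\le K}\pmb{\|}A_k\pmb{\|}+1$ (we may assume $M\ge 1$) and choose $\varepsilon>0$ small enough that $\varepsilon M<\varepsilon_0$, i.e. $\varepsilon<\varepsilon_0/M$. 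This $\varepsilon$ is what the lemma asserts exists.

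Next, fix any $\alpha\in[1,1+\varepsilon)$ and put $B_k=\alpha A_k$ for $1\le k\le K$. Then
\[
\pmb{\|}A_k-B_k\pmb{\|}=(\alpha-1)\pmb{\|}A_k\pmb{\|}<\varepsilon\,\pmb{\|}A_k\pmb{\|}\le\varepsilon M<\varepsilon_0,
\]
so $\B=\{B_1,\dots,B_K\}$ lies within the robustness ball. Hence, by the choice of $\varepsilon_0$, for every $n\ge1$ and every $w\in W_{\mathrm{per}}^n(\mathbb{A})$ we have $\rho(B_{w_n}\dotsm B_{w_1})<1$. Now use the elementary scaling identity for spectral radius: since $B_{w_n}\dotsm B_{w_1}=\alpha^n\,A_{w_n}\dotsm A_{w_1}$ and $\rho(cB)=|c|\,\rho(B)$ for any scalar $c$ and matrix $B$, we get
\[
\alpha^n\,\rho(A_{w_n}\dotsm A_{w_1})=\rho\big(\alpha^n A_{w_n}\dotsm A_{w_1}\big)=\rho(B_{w_n}\dotsm B_{w_1})<1,
\]
which is exactly \eqref{eq1.4}. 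This proves the lemma.

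There is essentially no obstacle here; the only point requiring a moment of care is that $\varepsilon_0$ in Definition~\ref{def1.1} is \emph{one} number valid across all word lengths $n$, so the single choice $\varepsilon<\varepsilon_0/M$ handles all periodic products at once — the uniformity in $n$ is inherited directly from the uniformity built into the definition of robust periodic stability, and no compactness or limiting argument is needed. (This is precisely the trick already flagged in the paragraph after Definition~\ref{def1.1}, where the author considers $B_k=(1+\varepsilon/2)A_k$; the lemma simply records the strengthened conclusion \eqref{eq1.4} that this substitution yields.)
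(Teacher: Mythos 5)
Your proof is correct and follows exactly the approach the paper sketches in the paragraph after Definition~\ref{def1.1} (scaling $B_k=\alpha A_k$ with $\alpha$ slightly larger than $1$ and using $\rho(\alpha^n C)=\alpha^n\rho(C)$); the paper itself never writes out the proof of Lemma~\ref{lem1.3} in full. Your introduction of the bound $M=\max_k\pmb{\|}A_k\pmb{\|}+1$ and the choice $\varepsilon<\varepsilon_0/M$ properly tightens the paper's informal hint, which as literally stated would only produce an admissible perturbation when $\pmb{\|}A_k\pmb{\|}<2$.
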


This dilation property~(\ref{eq1.4}) is obviously stronger than the complete periodic stability property $(\ref{eq1.3})$.
Then this together with the following so-called Gel'fand-Berger-Wang spectral formula of MJLS implies Theorem~\ref{thm1.2}.

\begin{theorem}[Gel'fand-Berger-Wang Formula of MJLS]\label{thm1.4}%%%
For any finite subset $\A=\{A_1,\dotsc,A_K\}$ of $\mathbb{R}^{d\times d}$ and any $\mathbb{A}\in\{0,1\}^{K\times K}$, it holds that
\begin{equation*}
\brho(\A,\mathbb{A}):=\limsup_{n\to\infty}\max_{w\in W_{\mathrm{per}}^n(\mathbb{A})}\sqrt[n]{\rho(A_{w_n}\dotsm A_{w_1})}=\lim_{n\to\infty}\max_{\sigma\in \varSigma_{\mathbb{A}}^+}\sqrt[n]{\pmb{\|}A_{\sigma(n)}\dotsm A_{\sigma(1)}\pmb{\|}}.
\end{equation*}
\end{theorem}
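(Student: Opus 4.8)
The proof splits into the two inequalities; write $\hat\rho:=\hat\rho(\A,\mathbb{A})$ for the right-hand side. First, the limit defining $\hat\rho$ exists: the sequence $a_n:=\log\max\{\pmb{\|}A_{w_n}\dotsm A_{w_1}\pmb{\|}:(w_1,\dotsc,w_n)\textrm{ is }\mathbb{A}\textrm{-admissible}\}$ is subadditive, since cutting an admissible $(m{+}n)$-word at position $m$ yields two admissible words, so Fekete's lemma gives $\hat\rho=\lim_n e^{a_n/n}=\inf_n e^{a_n/n}$. Since any symbol lying on no $\mathbb{A}$-cycle is visited at most once by any admissible word, deleting such symbols changes neither side, and passing to a strongly connected component changes neither side either; so we may assume that the subshift $(\varSigma_\mathbb{A}^+,T)$ is topologically transitive. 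The inequality $\brho(\A,\mathbb{A})\le\hat\rho$ is then immediate from Gel'fand's formula~(\ref{eq1.2}): for $w\in W_{\mathrm{per}}^n(\mathbb{A})$ the periodic sequence $\overline w$ lies in $\varSigma_\mathbb{A}^+$, so $(A_{w_n}\dotsm A_{w_1})^m$ is a product along an admissible $mn$-word for every $m$, whence $\rho(A_{w_n}\dotsm A_{w_1})=\lim_m\pmb{\|}(A_{w_n}\dotsm A_{w_1})^m\pmb{\|}^{1/m}\le\lim_m e^{a_{mn}/m}=\hat\rho^{\,n}$; take $n$-th roots, maximize over $w$, and let $n\to\infty$. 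Finally we may assume $\hat\rho>0$, for otherwise both sides vanish, and rescaling $A_k\mapsto A_k/\hat\rho$ we reduce to the case $\hat\rho=1$.

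For the reverse inequality there is a short route by reduction to the classical Berger--Wang theorem: encode $(\A,\mathbb{A})$ by the finite family of $(Kd)\times(Kd)$ block matrices $\widehat A_e$, one for each edge $e=(i\!\to\!j)$ of $\mathbb{A}$, where $\widehat A_e$ carries the block $A_j$ in position $(j,i)$ and zeros elsewhere; then products of the $\widehat A_e$ along non-paths vanish, products along admissible paths realize exactly the admissible products $A_{w_\ell}\dotsm A_{w_1}$, and products along closed paths realize conjugates of the cyclic products $A_{w_n}\dotsm A_{w_1}$, so the joint and generalized spectral radii of $\{\widehat A_e\}$ coincide with $\hat\rho$ and $\brho(\A,\mathbb{A})$ respectively, and the classical Berger--Wang identity for $\{\widehat A_e\}$ yields $\hat\rho=\brho(\A,\mathbb{A})$. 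In keeping with the methods of this paper, however, I would instead argue ergodically. Regard the linear cocycle $\mathcal{A}(\sigma)=A_{\sigma(1)}$ over $(\varSigma_\mathbb{A}^+,T)$, and apply a semi-uniform ergodic theorem to the continuous subadditive potential $S_n(\sigma)=\log\pmb{\|}A_{\sigma(n)}\dotsm A_{\sigma(1)}\pmb{\|}$: this gives the variational principle $0=\log\hat\rho=\lim_n\tfrac1n\sup_\sigma S_n(\sigma)=\max_\mu\lambda(\mu)$, the maximum over $T$-invariant Borel probability measures of the top Lyapunov exponent $\lambda(\mu)=\lim_n\tfrac1n\int S_n\,d\mu$. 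Since $\mu\mapsto\lambda(\mu)$ is affine and upper semicontinuous on the compact convex set of invariant measures, the maximum is attained at an ergodic measure $\mu_\ast$, with $\lambda(\mu_\ast)=0$.

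It then remains to produce, for each $\varepsilon>0$, arbitrarily long words $w\in W_{\mathrm{per}}^n(\mathbb{A})$ with $\rho(A_{w_n}\dotsm A_{w_1})\ge e^{-\varepsilon n}$. Apply the Oseledets theorem to $(\mathcal{A},T,\mu_\ast)$ and retain only the top data: the invariant subbundle $E(\sigma)$ spanned by the directions of exponent $0$, on which $\mathcal{A}$ restricts to an isomorphism with forward and backward growth $e^{o(n)}$, together with the complementary invariant subbundle of directions of exponent $<0$. By Lusin's and Egorov's theorems, applied over nested Pesin sets with a tempered constant, fix a compact set $\Gamma$ with $\mu_\ast(\Gamma)>1-\varepsilon$ on which $\sigma\mapsto E(\sigma)$ is continuous, the two subbundles are uniformly transverse, and the relevant expansion/contraction bounds hold. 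For a $\mu_\ast$-generic $\sigma$, Birkhoff's theorem makes its orbit spend a $(1-\varepsilon)$-fraction of every long time window in $\Gamma$, and Poincar\'e recurrence supplies arbitrarily large $n$ for which $T^n\sigma$ is so close to $\sigma$ that $w:=(\sigma(1),\dotsc,\sigma(n))$ is $\mathbb{A}$-cyclic, hence lies in $W_{\mathrm{per}}^n(\mathbb{A})$, and $E(T^n\sigma)$ is close to $E(\sigma)$. The periodic orbit of $\overline w$ then shadows a Pesin-regular segment of the orbit of $\sigma$, and a cone-field argument around $E$ along this periodic orbit---the $E$-directions expanding and the complementary directions contracting along the good $(1-\varepsilon)$-fraction of steps, with distortion at most $e^{O(\varepsilon n)}$ over the remaining steps---produces a $Q$-invariant subspace $V$, where $Q:=A_{w_n}\dotsm A_{w_1}$, on which $\pmb{\|}(Q|_V)^{-1}\pmb{\|}\le e^{O(\varepsilon n)}$; hence $\rho(Q)\ge e^{-O(\varepsilon n)}$. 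Letting $\varepsilon\downarrow0$ along such $n$ completes the argument.

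The crux is exactly this last step: converting the almost-everywhere, measure-theoretic growth carried by the extremal measure $\mu_\ast$ into a single genuine periodic orbit whose \emph{spectral radius}---not merely the norm of the corresponding matrix product---is nearly maximal. Because the norm of a periodic matrix product can be exponentially larger than its spectral radius, one cannot simply approximate $\mu_\ast$ weakly by periodic orbit measures and invoke semicontinuity of $\lambda$, which points the wrong way; a closing lemma with quantitative control of the top eigenvalue is genuinely needed. It is made harder here by two features: the $A_k$ need not be invertible, so there is no Oseledets \emph{splitting} and one must work with the top invariant subbundle on which $\mathcal{A}$ is an isomorphism; and the hyperbolicity of $\mu_\ast$ is only nonuniform, so the cone-field estimates carry a tempered, subexponential non-uniformity that must be absorbed into the $O(\varepsilon n)$ error along the shadowing orbit. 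This is the discrete, linear-cocycle incarnation of Liao's shadowing/perturbation technique invoked elsewhere in the paper.
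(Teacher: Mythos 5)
Your ergodic route coincides with the paper's own proof in all essentials: the paper reduces to an extremal ergodic measure $\mu$ (for which $e^{\blambda(\mu)}$ equals the right-hand side) and then proves a periodic-approximation theorem for Lyapunov exponents (Theorem~\ref{thm2.1R}) along exactly the lines you sketch --- the Froyland--Lloyd--Quas finer multiplicative ergodic theorem (Lemma~\ref{lem2.3R}) supplies the invariant top subbundle $\mathbb{E}_1$ and the filtration piece $V_2$ without assuming invertibility, Lusin plus Poincar\'e recurrence supply a recurrent $\sigma$ whose Oseledets data realign at a return time $n_k$ (Lemma~\ref{lem2.4R}), the subshift's closing property turns the first $n_k$ symbols of $\sigma$ into a cyclic word $w$ (Lemma~\ref{lem2.5R}), and an invariant-cone estimate around $\mathbb{E}_1$ (Lemma~\ref{lem2.6R}) gives $\rho(A_{w_{n_k}}\dotsm A_{w_1})\ge e^{n_k(\lambda_1(\mu)-3\epsilon)}$. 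One simplification worth adopting: because the cocycle is locally constant over a subshift, the matrix product along the closing periodic orbit is \emph{literally equal} to the product $\mathcal{A}(n_k,\sigma)$ along the original orbit segment, so there is no shadowing distortion to absorb, and the cone estimate is a single application at time $n_k$ using the asymptotic Oseledets growth rates rather than the step-by-step, Pliss-type bookkeeping you gesture at; nor does one need to extract a $Q$-invariant subspace, since an expanding invariant cone already yields $\rho(Q)\ge e^{n_k(\lambda_1-3\epsilon)}$ directly by iterating $Q$ on a unit vector in the cone and applying the Gel'fand formula.

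Your short route --- encoding $(\A,\mathbb{A})$ as a family $\{\widehat A_e\}$ of $Kd\times Kd$ block matrices indexed by the edges of $\mathbb{A}$ and invoking the classical, unconstrained Berger--Wang formula --- is not in the paper and is a genuinely different, more elementary argument, and it is correct. Products of edge matrices along non-paths vanish; along a path $i_0\to i_1\to\dotsm\to i_n$ the product carries exactly $A_{i_n}\dotsm A_{i_1}$ in block position $(i_n,i_0)$, so the joint spectral radius of $\{\widehat A_e\}$ equals the right-hand side; non-closed paths give nilpotent products, while closed paths place the cyclic product in a diagonal block, so the generalized spectral radius of $\{\widehat A_e\}$ equals $\brho(\A,\mathbb{A})$; classical Berger--Wang then closes the gap. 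You do need the paper's standing normalisation (every row and column of $\mathbb{A}$ contains a $1$) so that the sets of admissible length-$n$ words seen by the two constructions coincide; your reduction to a strongly connected component achieves this but deserves a sentence of justification, since it is not entirely immediate that discarding transient symbols or off-diagonal blocks of $\mathbb{A}$ leaves both sides of the identity unchanged. What the short route does not buy you is the continuous-time Theorem~\ref{thm1.8}: the paper remarks that no Gel'fand--Berger--Wang formula is available there, so the ergodic and Liao-perturbation apparatus is still needed, and proving Theorem~\ref{thm1.4} by the same circle of ideas keeps the two halves of the paper methodologically parallel --- presumably why the author did not take the shortcut. Your two routes thus genuinely complement each other.
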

\noindent Here $\brho(\A,\mathbb{A})$ is called the generalized/joint spectral radius of $\A$ restricted to $\mathbb{A}$; see Definition~\ref{def1.13} below for a more general case.

We note that the classical Berger-Wang formula is just the special case of $\mathbb{A}_{ij}\equiv 1$ (cf.~\cite{BW}), which was recently generalized to sets of precompact operators by Morris~\cite{Mor12} in the case free of any constraint. Wirth~\cite{Wir05} presented a continuous-time version obeying a kind of special constraint. Theorem~\ref{thm1.4} gives a positive answer to \cite[Question~1]{Dai-LAA} in the case of $\mathbb{A}$-constraint.

Now, if the MJLS (\ref{eq1.1}) is robustly periodically stable, then from Lemma~\ref{lem1.3} and Theorem~\ref{thm1.4} it follows that $\brho(\A,\mathbb{A})<1$. Further we can see that (\ref{eq1.1}) is uniformly exponentially stable by using \cite[Corollary~2.8]{Dai-LAA}.

So to prove Theorem~\ref{thm1.2}, it is sufficient to prove Theorem~\ref{thm1.4}. See Section~\ref{sec2R} for the details.

\subsubsection{The continuity of the joint spectral radius of MJLS}
C.~Heil and G.~Strang in \cite{HS} proved that the joint spectral radius of $\A$ free of any constraints is continuous with respect to $\A$ in the topological space $\mathrm{C}(\{1,\dotsc,K\},\mathbb{R}^{d\times d})$. In fact, Wirth~\cite{Wir-02} proved that it is Lipschitz continuous with respect to $\A$, and Kozyakin~\cite{Koz-LAA} explicitly computed the related Lipschitz constant.

As a simple consequence of our Gel'fand-Berger-Wang formula of MJLS, we can obtain the following result under the constraint $\mathbb{A}$.

\begin{cor}\label{cor1.5}%%%
Given any $\mathbb{A}\in\{0,1\}^{K\times K}$, $\brho(\A,\mathbb{A})$ is continuous with respect to $\A$ in the topological space $\mathrm{C}(\{1,\dotsc,K\},\mathbb{R}^{d\times d})$.
\end{cor}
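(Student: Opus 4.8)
The plan is to deduce the continuity of $\brho(\cdot,\mathbb{A})$ directly from the Gel'fand–Berger–Wang formula of Theorem~\ref{thm1.4}, by exhibiting $\brho(\cdot,\mathbb{A})$ as both an upper semicontinuous and a lower semicontinuous function of $\A$ on $\mathrm{C}(\{1,\dotsc,K\},\mathbb{R}^{d\times d})$. For the upper semicontinuity, I would use the right-hand side of the formula in Theorem~\ref{thm1.4}: for each fixed $n\ge1$, the map
\begin{equation*}
\A\longmapsto \max_{\sigma\in\varSigma_\mathbb{A}^+}\pmb{\|}A_{\sigma(n)}\dotsm A_{\sigma(1)}\pmb{\|}
\end{equation*}
depends only on the finitely many products $A_{w_n}\dotsm A_{w_1}$ over admissible words $w$ of length $n$, hence is continuous in $\A$; therefore its $n$th root is continuous, and since the sequence $\varphi_n(\A):=\max_{\sigma}\sqrt[n]{\pmb{\|}A_{\sigma(n)}\dotsm A_{\sigma(1)}\pmb{\|}}$ is submultiplicative in the usual sense, $\brho(\A,\mathbb{A})=\lim_n\varphi_n(\A)=\inf_n\varphi_n(\A)$ (after the standard Fekete-type argument), an infimum of continuous functions, which is upper semicontinuous.

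For the lower semicontinuity I would instead use the left-hand side of the formula, i.e. the description of $\brho(\A,\mathbb{A})$ in terms of spectral radii over periodic words. Fix $\A_0$ and $\delta>0$. By the defining $\limsup$ there is some $n$ and some $w\in W_\mathrm{per}^n(\mathbb{A})$ with $\sqrt[n]{\rho(A^0_{w_n}\dotsm A^0_{w_1})}>\brho(\A_0,\mathbb{A})-\delta$. Since $B\mapsto\rho(B)$ is continuous on $\mathbb{R}^{d\times d}$ and $\B\mapsto B_{w_n}\dotsm B_{w_1}$ is continuous, for all $\A$ sufficiently close to $\A_0$ we still have $\sqrt[n]{\rho(A_{w_n}\dotsm A_{w_1})}>\brho(\A_0,\mathbb{A})-\delta$, and hence $\brho(\A,\mathbb{A})\ge\brho(\A_0,\mathbb{A})-\delta$. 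This gives $\liminf_{\A\to\A_0}\brho(\A,\mathbb{A})\ge\brho(\A_0,\mathbb{A})$, i.e. lower semicontinuity. Combining the two semicontinuities yields continuity at every $\A_0$, proving the corollary.

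The only genuinely delicate point is the upper-semicontinuity half: one must be sure that $\brho(\A,\mathbb{A})=\inf_{n\ge1}\varphi_n(\A)$ and not merely $\brho(\A,\mathbb{A})=\lim_n\varphi_n(\A)$. This requires the submultiplicativity inequality $\varphi_{m+n}(\A)^{m+n}\le\varphi_m(\A)^m\,\varphi_n(\A)^n$ — which holds because any admissible word of length $m+n$ splits into an admissible prefix of length $m$ and an admissible suffix of length $n$ (no $\mathbb{A}$-constraint is violated at the cut, and we are taking a max over \emph{all} admissible words, not over closed loops) together with submultiplicativity of $\pmb{\|}\cdot\pmb{\|}$ — and then Fekete's subadditive lemma gives $\lim_n\varphi_n(\A)=\inf_n\varphi_n(\A)$. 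Everything else is a routine application of continuity of matrix multiplication and of the spectral-radius function, plus the elementary fact that an infimum (resp. supremum) of continuous functions is upper (resp. lower) semicontinuous. I would remark at the end that, as in the unconstrained case treated by Wirth~\cite{Wir-02} and Kozyakin~\cite{Koz-LAA}, one in fact expects Lipschitz continuity, but that the qualitative continuity stated here is an immediate corollary of Theorem~\ref{thm1.4}.
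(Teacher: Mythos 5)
Your proof follows essentially the same route as the paper's: you exhibit $\brho(\A,\mathbb{A})$ simultaneously as an infimum of continuous functions of $\A$ (via the norm side of Theorem~\ref{thm1.4} and Fekete, giving upper semicontinuity) and as a supremum of continuous functions (via the spectral-radius side, giving lower semicontinuity). The one step you leave implicit is the inequality $\brho(\A,\mathbb{A})\ge\sqrt[n]{\rho(A_{w_n}\dotsm A_{w_1})}$ for \emph{every} admissible cyclic word $w\in W_{\mathrm{per}}^n(\mathbb{A})$ and every $\A$, which your ``hence'' in the lower-semicontinuity half quietly uses — a $\limsup$ does not automatically dominate each of its terms — and this is precisely what the paper supplies by noting that $w^k\in W_{\mathrm{per}}^{kn}(\mathbb{A})$ with $\sqrt[kn]{\rho\bigl((A_{w_n}\dotsm A_{w_1})^k\bigr)}=\sqrt[n]{\rho(A_{w_n}\dotsm A_{w_1})}$, so the $\limsup$ in Theorem~\ref{thm1.4} is in fact a $\sup$ over all $n\ge1$.
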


\begin{proof}
Note that for any word $w\in W_{\mathrm{per}}^n(\mathbb{A})$ where $n\ge1$,
there holds that
\begin{equation*}
w^k:=(\stackrel{k\textrm{-time}}{\overbrace{w,\dotsc,w}})\in
W_{\mathrm{per}}^{kn}(\mathbb{A})\quad \textrm{and}\quad
\sqrt[k]{\rho((A_{w_n}\dotsm A_{w_1})^k)}=\rho(A_{w_n}\dotsm A_{w_1}).
\end{equation*}
So
$\sqrt[kn]{\rho((A_{w_n}\dotsm A_{w_1})^k)}=\sqrt[n]{\rho(A_{w_n}\dotsm A_{w_1})}$ for
all $k\ge 1$, and it follows from Theorem~\ref{thm1.4} that
\begin{equation*}
\brho(\A,\mathbb{A})=\sup_{n\ge N}\left\{\max_{w\in
W_{\mathrm{per}}^n(\mathbb{A})}\sqrt[n]{\rho(A_{w_n}\dotsm A_{w_1})}\right\}\quad
\forall N\ge1.
\end{equation*}
Then the statement follows immediately from
\begin{equation*}
\sup_{n\ge1}\left\{\max_{w\in
W_{\mathrm{per}}^n(\mathbb{A})}\sqrt[n]{\rho(A_{w_n}\dotsm A_{w_1})}\right\}=\brho(\A,\mathbb{A})=\inf_{n\ge
1}\left\{\max_{\sigma\in\varSigma_{\mathbb{A}}^+}\sqrt[n]{\pmb{\|}A_{\sigma(n)}\dotsm A_{\sigma(1)}\pmb{\|}}\right\}.
\end{equation*}
This completes the proof of Corollary~\ref{cor1.5}.
\end{proof}

This shows that our Gel'fand-Berger-Wang formula of MJLS is of independent interest.
%%%%%%%%%%%%%%%%%%%%%%%%%%%%%%%%%%%%%%%%%%%%%%%%%%%%%%%%%%%%%%%%%%%%%%%%%%%%%%%%%%%%%%%%%%%%%%
\subsection{Random linear ordinary differential equations}\label{sec1.2}
We now turn to the second part of this paper -- continuous-time random linear dynamical systems driven by a topological semiflow.
\subsubsection{The driving semiflow with closing by periodic orbits property}\label{sec1.2.1}%%%
We first introduce our driving dynamical system for the random linear ordinary differential equation considered later. Let
\begin{equation*}
\varphi\colon\mathbb{R}_+\times W\rightarrow W;\quad (t,w)\mapsto t\bcdot{w},\quad \textrm{where }\mathbb{R}_+=[0,\infty),
\end{equation*}
be a continuous semiflow on a compact metric space $(W,\bd)$. A point $p\in W$ is called a periodic point of $\varphi$ of period $\tau>0$ if and only if $p=\tau\bcdot p$.
By $\mathcal{P}(\varphi)$ we denote the set of all periodic points of $\varphi$. It includes all the fixed points of $\varphi$.

We now introduce the following important condition -- closing by periodic orbits property -- for our driving system $\varphi$.

\begin{defn}\label{def1.6}%%%
We say that the semiflow $\varphi\colon(t,w)\mapsto t\bcdot w$ has the \textit{closing by periodic orbits property}, provided that to any $\varepsilon>0$ there corresponds a constant $\delta>0$ such that if $w\in W$ satisfies $0<\bd(w,\tau{\bcdot}w)<\delta$ for some $\tau\ge1$, then one can pick up some point $\hat{w}\in W$ such that
$\hat{w}=\tau{\bcdot}\hat{w}$ and $\bd(t{\bcdot}w, t{\bcdot}\hat{w})<\varepsilon$ for all $0\le t\le \tau$.
\end{defn}

For the closing by periodic orbits property induced by the uniform hyperbolicity of a $\mathrm{C}^1$-vector field on manifolds, the period of $\hat{w}$ and its $\varepsilon$-shadowing property both permit a slight time-drift. In this case, our statement still holds by a slight modification of the proof presented here. Here our definition is convenient for our argument below.

By a standard argument, see e.g. \cite[Lemma~3.8]{Dai-FM}, we can see that the closing by periodic orbits property implies that for any ergodic probability measure $\mu$ of $\varphi$ on $W$, one can choose a sequence of periodic orbits $\{P_n\}_1^\infty$ of $\varphi$ such that
\begin{equation*}
\mu_{P_n}\xrightarrow{\textrm{in the weak-$*$ topology}}\mu\quad\textrm{and}\quad P_n\xrightarrow{\textrm{in the Hausdorff metric}}\mathrm{supp}(\mu)\quad \textrm{as }n\to+\infty,
\end{equation*}
where $\mu_{P}$ denotes the unique ergodic probability measure of $\varphi$ supported on the periodic orbit $P$. This point is important for the approximation of Lyapunov exponents by periodic orbits later in Sections~\ref{sec2R} and \ref{sec3}.
%%%%%%%%%%%%%%%%%%%%%%%%%%%%%%%%%%%%%%%%%%%%%%%%%%

\subsubsection{Robustly periodically stable random linear ordinary differential equations}\label{sec1.2.2}%%%
For any $\bX\in\mathrm{C}(W,\mathbb{R}^{d\times d})$, it naturally gives rise to a random linear differential dynamical system described as follows:
\begin{equation}\label{eq1.5}%%%
\dot{x}(t)=\bX(t{\bcdot}w)x(t)\quad (t\in\mathbb{R}_+, x\in\mathbb{R}^d\textrm{ and }w\in W)
\end{equation}
driven by the semiflow $\varphi\colon(t,w)\mapsto t{\bcdot}w$ as in Section~\ref{sec1.2.1}.

By $\mathscr{X}(t,w)$ we mean the corresponding standard fundamental matrix solution of (\ref{eq1.5}); that is to say, $\mathscr{X}(0,w)=I_d$ the $d\times d$ identity matrix, and
$\frac{d}{dt}\mathscr{X}(t,w)=\bX(t{\bcdot}w)\mathscr{X}(t,w)$ for all $t>0$ and $w\in W$. Then for any initial value $x_0\in\mathbb{R}^d$ and each driving point $w\in W$, the corresponding solution of (\ref{eq1.5}) is $x_w(t,x_0)=\mathscr{X}(t,w)x_0$ for $t\in\mathbb{R}_+$. So,
\begin{equation*}
(\varphi,\mathscr{X})\colon\mathbb{R}_+\times W\times\mathbb{R}^d\rightarrow W\times\mathbb{R}^d;\quad (t,(w,x))\mapsto\left(t{\bcdot}w,\mathscr{X}(t,w)x\right)
\end{equation*}
is a continuous skew-product semiflow, and
\begin{equation*}
\mathscr{X}\colon\mathbb{R}_+\times W\rightarrow \mathrm{GL}(d,\mathbb{R});\quad (t,w)\mapsto\mathscr{X}(t,w)
\end{equation*}
forms a smooth linear cocycle driven by the continuous semiflow $\varphi\colon\mathbb{R}_+\times W\rightarrow W$.

Similar to the discrete-time case, $\bX$ is said to be \textit{absolutely exponentially stable} driven by $\varphi$, if for any driving point $w\in W$ and any initial value $x_0\in\mathbb{R}^d$, the solution $x(t)=x_w(t,x_0)$ of (\ref{eq1.5}) is such that
\begin{equation*}
\limsup_{t\to\infty}\frac{1}{t}\log\|x(t)\|<0;
\end{equation*}
or equivalently, one can find two constants $C>0$ and $0<\gamma<1$ so that
\begin{equation*}
\pmb{\|}\mathscr{X}(t,w)\pmb{\|}\le C\gamma^t\quad\forall t\ge0\textrm{ and }w\in W;
\end{equation*}
that is, $\bX$ is \textit{uniformly exponentially stable} driven by $\varphi$.

The following concept is the continuous-time version of Definition~\ref{def1.1}, which is motivated by the $\mathrm{C}^1$-weak-star property introduced by Dai~\cite{Dai-JMSJ} for $\mathrm{C}^1$-vector fields on closed manifolds.

\begin{defn}\label{def1.7}%%%
$\bX\in\mathrm{C}(W,\mathbb{R}^{d\times d})$ is said to have the \textit{robust periodic stability} driven by $\varphi$, if one can find some $\varepsilon>0$ such that if $\bY\in\mathrm{C}(W,\mathbb{R}^{d\times d})$ satisfies $\pmb{\|}\bX-\bY\pmb{\|}<\varepsilon$ then for any periodic or fixed point $\tilde{w}$ of $\varphi$, the induced linear system $\dot{y}(t)=\bY(t\bcdot\tilde{w})y(t)$ is stable; that is, there are constants $K_{\bY,\tilde{w}}>0$ and $0<\gamma_{\bY,\tilde{w}}^{}<1$ so that $\pmb{\|}\mathscr{Y}(t,\tilde{w})\pmb{\|}\le K_{\bY,\tilde{w}}\gamma_{\bY,\tilde{w}}^t$ for all $t\ge0$.
\end{defn}

This concept is similar to, but weaker than, the classical $\mathrm{C}^1$-star
property for a $\mathrm{C}^1$-differentiable dynamical system that is defined on a closed manifold and has been deeply studied by \cite{Liao-CAM,GW,BJ,Dai-SCM} etc. in the continuous-time case and by \cite{Man,Aok,Hay-ETDS,Arb} and so on in the discrete-time case.

In the second part of this paper, we shall mainly prove the following stability criterion of continuous-time linear cocycles.

\begin{theorem}\label{thm1.8}%%%
Let $\varphi\colon(t,w)\mapsto t{\bcdot}w$ be a continuous semiflow on the compact metric space $W$, which has the closing by periodic orbits property; and let $\bX\in\mathrm{C}(W,\mathbb{R}^{d\times d})$. Then driven by $\varphi$, $\bX$ is robustly periodically stable if and only if it is uniformly exponentially stable.
\end{theorem}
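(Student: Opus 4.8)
The plan is to mirror the discrete-time argument (Theorem~\ref{thm1.2}) in the continuous-time setting, replacing the combinatorial Gel'fand-Berger-Wang formula by the semi-uniform ergodic theorems plus Liao's closing/perturbation machinery. As before, the ``only if'' direction is trivial, so the work is entirely in the ``if'' direction: assuming $\bX$ is robustly periodically stable driven by $\varphi$, we must produce uniform constants $C>0$, $0<\gamma<1$ with $\pmb{\|}\mathscr{X}(t,w)\pmb{\|}\le C\gamma^t$ for all $t\ge0$, $w\in W$. The natural quantity to control is the maximal Lyapunov exponent $\lambda_{\max}(\bX):=\lim_{t\to\infty}\frac1t\log\max_{w\in W}\pmb{\|}\mathscr{X}(t,w)\pmb{\|}$ (subadditivity of $t\mapsto\log\max_w\pmb{\|}\mathscr{X}(t,w)\pmb{\|}$ gives existence of the limit). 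The semi-uniform ergodic theorem (in the style of Sturman--Stark / Schreiber / Morris) says this equals $\sup_{\mu}\lambda_{\max}(\bX,\mu)$, the supremum of the top Lyapunov exponent over $\varphi$-invariant probability measures $\mu$, and by the ergodic decomposition and upper semicontinuity of $\mu\mapsto\lambda_{\max}(\bX,\mu)$ in the weak-$*$ topology (the integrand $w\mapsto\frac1t\log\pmb{\|}\mathscr{X}(t,w)\pmb{\|}$ is continuous and the infimum over $t$ is u.s.c.), the supremum is attained at some \emph{ergodic} measure $\mu_*$. It thus suffices to show $\lambda_{\max}(\bX,\mu_*)<0$; then $\lambda_{\max}(\bX)<0$ and a standard Gel'fand-type argument upgrades a negative exponential rate to the uniform bound $C\gamma^t$.

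The heart of the proof is therefore: if $\mu$ is any ergodic measure of $\varphi$, then $\lambda_{\max}(\bX,\mu)<0$. First I would fix the robustness radius $\varepsilon>0$ from Definition~\ref{def1.7}. By the closing-by-periodic-orbits property (the consequence recorded in Section~\ref{sec1.2.1}), choose periodic orbits $P_n$ of $\varphi$ with $\mu_{P_n}\to\mu$ weak-$*$ and $P_n\to\mathrm{supp}(\mu)$ in Hausdorff distance. The point is that the periodic orbits $P_n$ both Hausdorff-approximate $\mathrm{supp}(\mu)$ \emph{and} their invariant measures weak-$*$ approximate $\mu$; this double approximation is exactly what Liao's perturbation technique exploits. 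Using Liao's lemma I would build, for each large $n$, a perturbation $\bY_n\in\mathrm{C}(W,\mathbb{R}^{d\times d})$ with $\pmb{\|}\bX-\bY_n\pmb{\|}<\varepsilon$ whose cocycle along $P_n$ realizes a chosen (nearly $\mu$-averaged) behaviour; concretely, one modifies $\bX$ in a small tube around $P_n$ so that the holonomy of $\mathscr{Y}_n$ around $P_n$ has spectral radius at least $\exp\big(\tau_n(\lambda_{\max}(\bX,\mu)-\eta)\big)$, where $\tau_n$ is the period of $P_n$ and $\eta>0$ is arbitrarily small. Robust periodic stability forces the $\bY_n$-system along $P_n$ to be stable, i.e. that spectral radius is $<1$, hence $\lambda_{\max}(\bX,\mu)-\eta\le 0$; letting $\eta\downarrow0$ gives $\lambda_{\max}(\bX,\mu)\le0$. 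To get the \emph{strict} inequality I would instead invoke Lemma~\ref{lem1.3}'s continuous-time analogue: robust periodic stability gives a \emph{dilation} margin, so that $\alpha\bX$ is still robustly periodically stable (in the relevant sense) for some $\alpha>1$, whence $\lambda_{\max}(\alpha\bX,\mu)\le0$ for all ergodic $\mu$, i.e. $\lambda_{\max}(\bX,\mu)\le-\log\alpha<0$ uniformly. Combining with the semi-uniform ergodic theorem yields $\lambda_{\max}(\bX)\le-\log\alpha<0$, completing the proof.

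The main obstacle I anticipate is the construction of the perturbations $\bY_n$ supporting the required growth along $P_n$ while staying within the $\varepsilon$-ball and remaining a legitimate element of $\mathrm{C}(W,\mathbb{R}^{d\times d})$: this is precisely where Shantao Liao's perturbation technique is needed, and one must be careful that (i) the perturbation is localized to an arbitrarily thin flow-tube around $P_n$ (so its $\mathrm{C}^0$-size is controlled by how close the cocycle data of $\bX$ along $P_n$ is to the time-$\tau_n$ average dictated by $\mu$, which in turn is governed by the weak-$*$ convergence $\mu_{P_n}\to\mu$), and (ii) the resulting change in the monodromy eigenvalue along $P_n$ is genuinely of the exponential size $\exp(\tau_n(\lambda_{\max}(\bX,\mu)-\eta))$, which requires a Lyapunov/Oseledets-type argument along the orbit plus a linear-algebraic rotation trick to line up the most-expanded direction. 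A secondary technical point is verifying upper semicontinuity of $\mu\mapsto\lambda_{\max}(\bX,\mu)$ and the attainment of the supremum at an ergodic measure; this is routine given compactness of the space of invariant measures and subadditivity, but must be stated carefully. Once these pieces are in place, the passage from ``negative top exponent'' to ``uniform exponential stability'' is standard (e.g. via the continuous-time Gel'fand argument already used implicitly in the discrete case through \cite[Corollary~2.8]{Dai-LAA}).
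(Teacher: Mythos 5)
Your high-level plan is the paper's plan: reduce to showing every ergodic measure $\mu$ has negative maximal Lyapunov exponent (via the semi-uniform subadditive ergodic theorem plus upper semicontinuity of the exponent), approximate $\mu$ by periodic orbits using the closing property, perturb along those orbits with Liao's lemma, and let robust periodic stability supply the contradiction. Where the paper differs structurally is that it isolates a uniform ``quasi-contraction lemma'' (Theorem~\ref{thm1.9}/\ref{thm3.2R}) as a stand-alone intermediate result: there is a single $\bta<0$ and $\bT>0$ such that for every periodic point of period $\ge\bT$, the Birkhoff-type product $\prod_k\pmb{\|}\mathscr{X}(t_k-t_{k-1},t_{k-1}{\bcdot}w)\pmb{\|}$ over any $\bT$-spaced subdivision of the period is $\le\exp(\bta\pi_w)$. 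This decouples the Liao perturbation (which lives entirely in the proof of Theorem~\ref{thm1.9}) from the ergodic approximation (Lemmas~\ref{lem3.3R},~\ref{lem3.4R}), and the bridge between them is the finite-time integral $\int\xi_{\bT}\,d\mu_{P_n}$, which is weak-$*$ continuous in the measure and hence passes cleanly to the limit. Your version tries to do both at once.

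Two concrete issues in your write-up. First, the ``dilation'' step as you state it is wrong for continuous time: $\lambda_{\max}(\alpha\bX,\mu)\ne\lambda_{\max}(\bX,\mu)+\log\alpha$ for a cocycle over a fixed driving semiflow, since scaling the generator $\bX$ by $\alpha$ is not a time rescaling of the nonautonomous equation. The correct dilation is additive: replace $\bX$ by $\bX+cI_d$, which gives $\mathscr{X}_c(t,w)=e^{ct}\mathscr{X}(t,w)$ and shifts the exponent by $c$. The paper in fact uses exactly this additive trick, but inside the proof of the quasi-contraction lemma (the term $\alpha I_d$ added to $B_{\w}(t)$), not as a separate post-processing step. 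Second, your step 3 --- building $\bY_n$ whose monodromy along $P_n$ has spectral radius at least $\exp(\tau_n(\lambda_{\max}(\bX,\mu)-\eta))$ --- is where the hand-waving is genuinely dangerous. Liao's lemma does not ``realize'' $\lambda_{\max}(\bX,\mu)$; it realizes the subdivided-norm product along $P_n$, i.e.\ $\prod_k\pmb{\|}\Phi(t_k,t_{k-1})\pmb{\|}$. Getting from that to $\lambda_{\max}(\bX,\mu)$ requires noting that for a fixed mesh $\bT$, the subdivided-norm product is essentially $\exp(\tau_n\int\xi_{\bT}\,d\mu_{P_n})$, that $\int\xi_{\bT}\,d\mu_{P_n}\to\int\xi_{\bT}\,d\mu$, and that $\int\xi_{\bT}\,d\mu\ge\lambda_{\max}(\bX,\mu)$ by subadditivity. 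There is no ``Oseledets along the orbit plus a linear-algebraic rotation trick'' needed here; the needed argument is a finite-time Birkhoff average (handled in the paper around (\ref{eq3.11R})--(\ref{eq3.12R}), with the remainder $\varDelta_n$ of the period modulo $\bT$ handled explicitly), followed by the ergodic decomposition lemma (Lemma~\ref{lem3.4R}) to upgrade a negative finite-time integral to a negative exponent. So the framework is right, but the two pieces you flagged as ``the main obstacle'' are exactly where the proof must be different from what you sketched.
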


It should be noted here that without the robustness condition, only the periodic or even the complete periodic stability of $\bX$ does not need to imply the uniform stability in general, as shown by Theorem~\ref{thm1.14} below and Theorem~\ref{thm4.1} presented in Section~\ref{sec4.2}.

In addition, we should note that strictly speaking, Theorem~\ref{thm1.2} is not a discrete-time version of Theorem~\ref{thm1.8} because of the following reason: For the MJLS (\ref{eq1.1}), its driving system is the one-sided Markovian subshift transformation of finite-type:
\begin{equation}\label{eq1.6}%%%
\theta_\mathbb{A}^+\colon\varSigma_\mathbb{A}^+\rightarrow\varSigma_\mathbb{A}^+;\quad\sigma(\bcdot)\mapsto\sigma(\bcdot+1)
\end{equation}
and the generator of the cocycle is
\begin{equation}
\A\colon\varSigma_\mathbb{A}^+\rightarrow\mathbb{R}^{d\times d}\quad \textrm{by }\sigma\mapsto A_{\sigma(1)}.
\end{equation}
So under the situation of Theorem~\ref{thm1.8} the admissible $\varepsilon$-perturbation $\B$ belongs to $\mathrm{C}(\varSigma_\mathbb{A}^+,\mathbb{R}^{d\times d})$, which is not necessarily to be locally constant and so it does not need to generate a MJLS of type (\ref{eq1.1}) with $\B$ instead of $\A$. However, under the situation of Theorem~\ref{thm1.2}, our admissible $\varepsilon$-perturbation $\B$ belongs to $\mathrm{C}(\{1,\dotsc,K\},\mathbb{R}^{d\times d})$ which can give rise to another MJLS $\varepsilon$-close to the MJLS (\ref{eq1.1}). This is just the reason why we need Theorem~\ref{thm1.4}.

Comparing with the $\mathrm{C}^1$-star condition in the differentiable systems, since here the driving system $\varphi\colon\mathbb{R}_+\times W\rightarrow W$ has been given previously and it is independent of the ``hyperbolicity'' of $\bX$, we cannot make use of the classical shadowing lemma of Liao for quasi-hyperbolic orbit arc to construct a self-contradictory periodic orbit, when we assume the statement of Theorem~\ref{thm1.8} would not be true. Moreover, for a $\mathrm{C}^1$-vector field, it has at most a finite number of stable periodic orbits. However, our driving semiflow in Theorem~\ref{thm1.8} may have infinitely many stable periodic orbits.

\subsubsection{Outline of the proof of Theorem~\ref{thm1.8}}%%%%%%
Under the robust periodic stability condition (Definition~\ref{def1.7}), similar to Lemma~\ref{lem1.3} we can easily observe the fact that there exists a constant $0<\bgamma<1$ such that
\begin{equation}
\sqrt[\tau]{\rho(\mathscr{X}(\tau,w))}\le\bgamma\quad\forall w\in\mathcal{P}_\tau(\varphi)\textrm{ and }\tau>0,
\end{equation}
where $\mathcal{P}_\tau(\varphi)$ denotes the set of all periodic points of $\varphi$ of period $\tau$. But crucially there exists no an analogous Gel'fand-Berger-Wang formula under the situation of Theorem~\ref{thm1.8}. (In fact, it is an open problem!)

Now differently from Theorem~\ref{thm1.4}, by using Liao's perturbation methods developed in \cite{Liao-AMS, Liao-CAM}, to prove Theorem~\ref{thm1.8} we will first prove the following

\begin{theorem}[Quasi contraction lemma]\label{thm1.9}%%%
There are constants $\bta<0$ and $\bT>0$ such that,
if $w\in\mathcal {P}(\varphi)$ has the period
$\pi_{w}\ge\bT$ and if
$0=t_0<t_1<\cdots<t_\ell=\pi_w$, $\ell\ge 1$,
is a subdivision of the interval $[0,\pi_w]$ satisfying $t_k-t_{k-1}\ge\bT$ for
$k=1,\ldots,\ell$, then
\begin{equation*}
\frac{1}{\pi_w}\sum_{k=1}^\ell\log\pmb{\|}\mathscr{X}(t_k-t_{k-1},{t_{k-1}}{\bcdot}w)\pmb{\|}\le\bta.
\end{equation*}
\end{theorem}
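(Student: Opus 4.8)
\textbf{Proof proposal for Theorem~\ref{thm1.9} (Quasi contraction lemma).}

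The plan is to argue by contradiction, exploiting the robust periodic stability of $\bX$ together with a compactness/ergodic-theoretic limiting procedure, and, at the crux, Liao's perturbation technique to convert a ``bad'' long quasi-periodic orbit segment into a genuine periodic orbit of a nearby linear system that is \emph{not} stable, contradicting Definition~\ref{def1.7}. So suppose the statement fails. Then for every choice of candidate constants $\bta<0$ and $\bT>0$ one can find a periodic point $w\in\mathcal{P}(\varphi)$ of period $\pi_w\ge\bT$ and a subdivision $0=t_0<t_1<\dots<t_\ell=\pi_w$ with all gaps $t_k-t_{k-1}\ge\bT$ such that
\begin{equation*}
\frac{1}{\pi_w}\sum_{k=1}^\ell\log\pmb{\|}\mathscr{X}(t_k-t_{k-1},t_{k-1}{\bcdot}w)\pmb{\|}>\bta.
\end{equation*}
First I would let $\bta\uparrow 0$ (say $\bta=-1/m$) and $\bT\uparrow\infty$ along sequences, producing periodic points $w_m$, periods $\pi_m\to\infty$, and subdivisions whose gaps tend to infinity, for which the averaged log-norm over the subdivision tends to $\ge 0$. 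The key normalization is that $\sqrt[\pi_m]{\rho(\mathscr{X}(\pi_m,w_m))}\le\bgamma<1$ by the (easy) consequence of robust periodic stability quoted just before the theorem, while the \emph{sum of log-norms of the pieces} is asymptotically nonnegative; the gap between these two quantities is exactly the ``excess'' that Liao's method will convert into an instability.

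The core step is Liao's perturbation construction. Along the periodic $\varphi$-orbit of $w_m$, the fundamental matrix factors as $\mathscr{X}(\pi_m,w_m)=M_\ell\cdots M_1$ where $M_k=\mathscr{X}(t_k-t_{k-1},t_{k-1}{\bcdot}w_m)$, and each $\pmb{\|}M_k\pmb{\|}$ is large relative to what a uniformly contracting cocycle would allow, in the averaged sense above. Following Liao~\cite{Liao-AMS,Liao-CAM} (and the adaptations in \cite{GW,Dai-SCM}), on each time-window $[t_{k-1},t_k]$ I would perturb $\bX$ inside a small $\mathrm{C}^0$-ball — supported near the orbit segment, using that the gaps are long so there is room for the perturbation to act and then be switched off before the next window — so as to rotate the image of a suitably chosen unit vector onto (or near) the direction realizing $\pmb{\|}M_k\pmb{\|}$, thereby forcing the perturbed transition matrices $\tilde M_k$ to satisfy $\pmb{\|}\tilde M_\ell\cdots\tilde M_1 v\pmb{\|}\ge\prod_k\pmb{\|}M_k\pmb{\|}\cdot e^{-o(\pi_m)}$ for some unit vector $v$. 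Choosing $v$ to be (close to) an eigenvector of the perturbed monodromy, or iterating the perturbed period enough times and invoking Gel'fand's formula~(\ref{eq1.2}), one obtains $\rho(\tilde M_\ell\cdots\tilde M_1)\ge e^{o(\pi_m)}\prod_k\pmb{\|}M_k\pmb{\|}$, whose $\pi_m$-th root is $\ge e^{\bta-o(1)}\to 1$. Hence for $m$ large the perturbed linear system $\dot y=\bY(t{\bcdot}w_m)y$ (with $\bY$ within $\varepsilon$ of $\bX$) has a non-stable periodic orbit, contradicting robust periodic stability. This yields the existence of the desired $\bta<0$ and $\bT>0$.

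The main obstacle I anticipate is making the perturbation estimate uniform and bookkeeping the error terms. Two difficulties are intertwined: (i) the perturbation on each window must be of size $<\varepsilon$ in the $\mathrm{C}^0$-norm of $\mathrm{C}(W,\mathbb{R}^{d\times d})$ while still effecting the required rotation, which forces the window length to exceed a threshold $\bT$ depending on $\varepsilon$, on $\sup_{w}\pmb{\|}\bX(w)\pmb{\|}$, and on $d$ — this is precisely why the subdivision gaps $t_k-t_{k-1}\ge\bT$ enter the statement; and (ii) the perturbations on consecutive windows must be disjointly supported along the $\varphi$-orbit, which requires controlling how the orbit of $w_m$ winds around $W$, and here one needs to be slightly careful because a periodic orbit of $\varphi$ may have a short period even when $\pi_m$ is large (i.e.\ $w_m$ could traverse a short loop many times). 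I would handle the latter by passing to the primitive period and, if it is short, using the closing-by-periodic-orbits property only implicitly — in fact in the short-primitive-period case the conclusion follows directly from $\sqrt[\tau]{\rho(\mathscr{X}(\tau,w))}\le\bgamma$ together with submultiplicativity, so the genuinely new content is the long-primitive-period regime, where Liao's windowed perturbation applies cleanly. The remaining error terms — the $o(\pi_m)$ losses from switching the perturbation on and off, from replacing $\pmb{\|}M_k v_{k-1}\pmb{\|}$ by $\pmb{\|}M_k\pmb{\|}\cdot\pmb{\|}v_{k-1}\pmb{\|}$, and from passing from the monodromy norm to its spectral radius via~(\ref{eq1.2}) — are each $O(\ell)+O(1)$ times a constant, hence $o(\pi_m)$ once $\bT$ is large, so they are absorbed.
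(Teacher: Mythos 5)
Your overall strategy---deploy Liao's windowed perturbation to convert a periodic orbit with large block-norm product into an unstable periodic orbit of a $\mathrm{C}^0$-nearby coefficient $\bY$, contradicting Definition~\ref{def1.7}---is exactly the approach the paper takes, so the idea is right. But there is a genuine gap at the crux. Liao's perturbation lemma (Lemma~\ref{lem3.1R}) gives a solution $y_\natural$ of the perturbed equation with $y_\natural(0)=y_0$, $y_\natural(\pi_w)=\pm\|y_\natural(\pi_w)\|\,y_0$, and $\|y_\natural(\pi_w)\|=\prod_k\pmb{\|}\Phi_a(t_k,t_{k-1})\pmb{\|}$ \emph{exactly}; so $y_0$ is an honest eigenvector of the perturbed monodromy with eigenvalue of modulus exactly $\varDelta:=\prod_k\pmb{\|}\mathscr{X}(t_k-t_{k-1},t_{k-1}{\bcdot}w)\pmb{\|}$ (your $e^{o(\pi_m)}$ bookkeeping is unnecessary). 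When the quasi-contraction bound fails you only know $\varDelta>e^{\bta\pi_w}$ with $\bta<0$, so $\varDelta$ may well be $<1$; consequently your conclusion $\rho(\widetilde M_\ell\cdots\widetilde M_1)^{1/\pi_m}\ge e^{\bta-o(1)}\to 1$ gives spectral radii approaching $1$ from below but never $\ge 1$, and hence produces no contradiction with robust periodic stability. The sequential limiting set-up $\bta=-1/m$, $\bT=m$ cannot repair this: it is precisely what drives the lower bound to $1$ without crossing it.

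The missing ingredient is a second, scalar dilation. After applying Liao's lemma, further perturb $\bX$ along the orbit by adding $\alpha I_d$ with $\alpha=\max\{0,-\frac{1}{\pi_w}\log\varDelta\}\in[0,-\bta)$. This multiplies the solution by $e^{\alpha t}$, so the perturbed monodromy has an eigenvalue of modulus $\max\{\varDelta,1\}\ge 1$, giving $\rho(\mathscr{Y}(\pi_w,w))\ge 1$; the combined perturbation size is still controlled (at most $2\varrho<\delta$, where $\delta$ is chosen via Urysohn--Tietze so that any $\delta$-small perturbation supported on the orbit arc extends to a $\bY\in\mathrm{C}(W,\mathbb{R}^{d\times d})$ within $\varepsilon$ of $\bX$). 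With this dilation the contradiction is immediate for a single fixed $w$ and subdivision once the explicit threshold $\bT$ (determined by $\varepsilon$, $\delta$, and $\ba_*=\max_w\pmb{\|}\bX(w)\pmb{\|}$ through Liao's constants) is chosen, so no contradiction-along-a-sequence argument is needed; the constants $\bta=-\varrho/4$ and $\bT$ come out explicitly. Your observation about short prime periods is a fair one---the paper's proof of Theorem~\ref{thm3.2R} assumes the \emph{prime} period exceeds $\bT$ and treats small prime periods separately via the semi-uniform ergodic argument of Lemma~\ref{lem3.3R}---but it is a side issue compared to the missing dilation.
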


This quasi contraction implies that $\rho(\mathscr{X}(\pi_w,w))<\exp(\pi_w\bta)$ for any $w\in\mathcal{P}(\varphi)$ of period $\pi_w$. So it is stronger than the dilation property (\ref{eq1.4}).

Based on this theorem and some semi-uniform ergodic arguments (Lemmas~\ref{lem3.3R} and \ref{lem3.4R}), we can prove Theorem~\ref{thm1.8}; see Section~\ref{sec3} for the details.

%%%%%%%%%%%%%%%%%%%%%%%%%%%%%%%%%%%%%%%%%%%%%%%%%%%%%%%%%%%%%%%%%%%%%%%%%%%%
\subsection{Linear cocycle driven by an irrational rotation}\label{sec1.3}%%%
To construct counterexamples to our robustness condition, we need to consider linear cocycles driven by an irrational rotation in the third part of this paper.

Let $\mathbb{T}^1$ be the unit circle in the complex plane $\mathbb{C}^1$, let $\mathscr{B}(\mathbb{T}^1)$ be the Borel $\sigma$-field generated by the open arcs, and let $\bnu$ be the normalized circular Lebesgue measure: map $[0,1)$ to the unit circle by $\phi(x)=e^{2\pi\mathfrak{i}x}$ and $\bnu(A)=\verb"Leb"(\phi^{-1}(A))$ for $A\in\mathscr{B}(\mathbb{T}^1)$, where $\mathfrak{i}^2=-1$ and \verb"Leb"($\cdot$) is the standard Lebesgue measure on $[0,1)$. For any fixed $x\in[0,1)$, we define
\begin{equation}\label{eq1.9}%%%
R_x\colon\mathbb{T}^1\rightarrow\mathbb{T}^1;\quad w\mapsto we^{2\pi\mathfrak{i}x}.
\end{equation}
Since $R_x$ is effectively the rotation of the circle through the angle $2\pi x$ for each $x\in[0,1)$, $R_x$ preserves $\bnu$ and it is ergodic if and only if $x$ is irrational.

Recall that for an irrational number $\omega$, there is the following rational approximation theorem:

\begin{Dirichlet}
If $\omega\in[0,1)$ is irrational, then one can find a sequence of integer pairs $(p_n,q_n)\in\mathbb{N}\times \mathbb{N}$ such that $p_n/q_n$ are irreducible, $q_n\uparrow+\infty$, and
\begin{equation*}
\big{|}\omega-\frac{p_n}{q_n}\big{|}<\frac{1}{q_n^2}
\end{equation*}
as $n$ tends to $+\infty$. Here one can require $p_n/q_n\uparrow\omega$ (resp. $p_n/q_n\downarrow\omega$) as $n\to+\infty$.
\end{Dirichlet}

Next, for any irrational $\omega\in[0,1)$ and any continuous matrix-valued function
\begin{equation*}
\bS\colon\mathbb{T}^1\rightarrow\mathrm{GL}(d,\mathbb{R});\quad w\mapsto S(w),
\end{equation*}
we consider the stability of the induced linear cocycle
\begin{equation*}
\mathscr{S}_\omega\colon\mathbb{N}\times\mathbb{T}^1\rightarrow\mathrm{GL}(d,\mathbb{R});\quad (n,w)\mapsto S(R_\omega^{n-1}(w))\dotsm S(R_\omega(w))S(w)
\end{equation*}
driven by the irrational rotation $R_\omega\colon\mathbb{T}^1\rightarrow\mathbb{T}^1$, which is such that
\begin{equation*}
\mathscr{S}_\omega(m+n,w)=\mathscr{S}_\omega(n,R_\omega^m(w))\mathscr{S}_\omega(m,w)\quad \forall w\in\mathbb{T}^1\textrm{ and }m,n\in\mathbb{N},
\end{equation*}
under the following robustness condition.

\begin{defn}\label{def1.10}%%%
For an irrational $\omega\in[0,1)$ and $\bS\in\mathrm{C}(\mathbb{T}^1,\mathrm{GL}(d,\mathbb{R}))$, the cocycle $\mathscr{S}_\omega$ is called \textit{robustly periodically stable}, provided that one can find an $\varepsilon>0$ and a sequence of integer pairs $(p_n,q_n)$ as in Dirichlet's theorem such that if $\B\in\mathrm{C}(\mathbb{T}^1,\mathrm{GL}(d,\mathbb{R}))$ satisfies $\pmb{\|}\bS-\B\pmb{\|}<\varepsilon$ then for any $n\ge1$ (sufficiently large),
\begin{equation*}
\rho\big{(}B(R_{x_n}^{q_n-1}(w))\dotsm B(R_{x_n}(w))B(w)\big{)}<1\quad\forall w\in\mathbb{T}^1.
\end{equation*}
Here $x_n=p_n/q_n$ for $n\ge1$.
\end{defn}

Then we can obtain the following uniform stability result.

\begin{theorem}\label{thm1.11}%%%
Let $\omega\in(0,1)$ be irrational and $\bS\in\mathrm{C}(\mathbb{T}^1,\mathrm{GL}(d,\mathbb{R}))$. If the induced cocycle $\mathscr{S}_\omega$ is robustly periodically stable, then it is uniformly exponentially stable.
\end{theorem}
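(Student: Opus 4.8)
The plan is to reduce the theorem to a Lyapunov-exponent inequality for the unique invariant measure of the driving rotation, and then, assuming that inequality fails, to contradict the robustness hypothesis by perturbing $\bS$ along an orbit segment of large norm-growth that almost closes up, using Liao's perturbation technique to manufacture a genuinely expanding $q_n$-periodic orbit of a nearby cocycle over $R_{x_n}$. Since $\omega$ is irrational, $R_\omega$ is uniquely ergodic with unique invariant measure $\bnu$; by the (discrete-time) semi-uniform ergodic theorem for uniquely ergodic systems --- the $R_\omega$-analogue of Lemmas~\ref{lem3.3R} and \ref{lem3.4R} --- $\mathscr{S}_\omega$ is uniformly exponentially stable if and only if $\lambda:=\inf_{m\ge1}\frac1m\int_{\mathbb{T}^1}\log\pmb{\|}\mathscr{S}_\omega(m,w)\pmb{\|}\,d\bnu(w)<0$, and $\lambda=\lim_{m\to\infty}\frac1m\log\pmb{\|}\mathscr{S}_\omega(m,w)\pmb{\|}$ for $\bnu$-a.e.\ $w$. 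So it suffices to show $\lambda<0$; suppose instead $\lambda\ge0$, and let $\varepsilon>0$ and the Dirichlet sequence $(p_n,q_n)$ be the data furnished by Definition~\ref{def1.10}. It is enough to exhibit one $\B\in\mathrm{C}(\mathbb{T}^1,\mathrm{GL}(d,\mathbb{R}))$ with $\pmb{\|}\bS-\B\pmb{\|}<\varepsilon$ for which $\sup_{w\in\mathbb{T}^1}\rho\big(B(R_{x_n}^{q_n-1}w)\cdots B(w)\big)\ge1$ for infinitely many $n$, since this contradicts Definition~\ref{def1.10}. Replacing $\bS$ by $(1+c_0)\bS$ with $c_0>0$ so small that $c_0\sup_w\pmb{\|}S(w)\pmb{\|}<\varepsilon/2$ raises $\lambda$ by $\log(1+c_0)=:\kappa>0$ at the cost of only half of the perturbation budget (a perturbation of $(1+c_0)\bS$ of size $<\varepsilon/2$ is still an $\varepsilon$-perturbation of the original $\bS$), so we may assume $\lambda\ge\kappa>0$. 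By subadditivity of $m\mapsto\int\log\pmb{\|}\mathscr{S}_\omega(m,\cdot)\pmb{\|}\,d\bnu$ one gets $\int\log\pmb{\|}\mathscr{S}_\omega(q_n,w)\pmb{\|}\,d\bnu(w)\ge q_n\kappa$, and a Chebyshev-type estimate (using $\pmb{\|}\mathscr{S}_\omega(q_n,w)\pmb{\|}\le(\sup_w\pmb{\|}S(w)\pmb{\|})^{q_n}$) yields $\bnu(G_n)\ge c_1$ for a constant $c_1>0$ independent of $n$, where $G_n:=\{w:\pmb{\|}\mathscr{S}_\omega(q_n,w)\pmb{\|}\ge e^{q_n\kappa/2}\}$.

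Now fix a large $n$ and $w_n\in G_n$. Because $|\omega-p_n/q_n|<q_n^{-2}$, the point $w_n$ is $R_{x_n}$-periodic of period $q_n$ and $|R_{x_n}^kw_n-R_\omega^kw_n|\le2\pi/q_n$ for $0\le k\le q_n$; thus the $R_{x_n}$-orbit of $w_n$ is a genuine periodic orbit that $O(1/q_n)$-shadows the length-$q_n$ segment $w_n,R_\omega w_n,\dots,R_\omega^{q_n-1}w_n$ of $R_\omega$. Write $P_k:=S(R_\omega^kw_n)$, so $P_{q_n-1}\cdots P_0=\mathscr{S}_\omega(q_n,w_n)$ has norm $\ge e^{q_n\kappa/2}$. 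Build $\B$ by altering $\bS$ only inside pairwise-disjoint disks $D_k$ centred at $R_{x_n}^kw_n$ with radii $\ll q_n^{-1}$: on $D_k$, interpolate within $\mathrm{GL}(d,\mathbb{R})$ between the centre value $B(R_{x_n}^kw_n)=O_kP_k$, with $O_k\in SO(d)$ a rotation of angle $O(1/q_n)$ to be chosen, and $S(\cdot)$ on $\partial D_k$; set $\B=\bS$ off $\bigcup_kD_k$. By uniform continuity of $\bS$, $\pmb{\|}\bS-\B\pmb{\|}\to0$ as $n\to\infty$, so $\B$ is admissible and $\mathrm{GL}(d,\mathbb{R})$-valued, and $B(R_{x_n}^{q_n-1}w_n)\cdots B(w_n)=(O_{q_n-1}P_{q_n-1})\cdots(O_0P_0)$ is a small rotational perturbation, spread over the $q_n$ factors, of the matrix $P_{q_n-1}\cdots P_0$ of norm $\ge e^{q_n\kappa/2}$. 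Liao's perturbation technique --- the discrete-time, rotation-adapted counterpart of the mechanism behind the Quasi-contraction Lemma (Theorem~\ref{thm1.9}) --- then lets one choose the $O_k$ so that $\rho\big(B(R_{x_n}^{q_n-1}w_n)\cdots B(w_n)\big)\ge e^{q_n\kappa/4}\ge1$ for all large $n$.

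To obtain a single $\B$ serving infinitely many $n$, carry out the construction along a rapidly increasing sequence of indices $n_1<n_2<\cdots$: at stage $j$ choose $w_{n_j}\in G_{n_j}$ whose entire $R_{x_{n_j}}$-orbit misses the finitely many disks used at stages $1,\dots,j-1$ --- possible since $\bnu$ is invariant under every rotation and, by shrinking the stage-$j$ disk radii, the forbidden set can be kept of $\bnu$-measure $<c_1q_{n_j}^{-1}$. Then the modifications of distinct stages have disjoint supports, each disjoint from the orbits used at the other stages, so each stage's product $B(R_{x_{n_j}}^{q_{n_j}-1}w_{n_j})\cdots B(w_{n_j})$ is unaffected by the others and still has spectral radius $\ge1$; and $\pmb{\|}\bS-\B\pmb{\|}<\varepsilon$ provided the sequence $(n_j)$ starts late enough. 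This $\B$ contradicts Definition~\ref{def1.10}, whence $\lambda<0$ and $\mathscr{S}_\omega$ is uniformly exponentially stable.

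The crux is the Liao-perturbation step of the second paragraph: converting ``large norm-growth along a periodic orbit'' into ``spectral radius $\ge1$ of the corresponding return map'' by an $o(1)$ perturbation of $\bS$. As in the proof of Theorem~\ref{thm1.9}, this requires the quasi-hyperbolic-string machinery --- one selects, or first perturbs into, a quasi-hyperbolic string along the orbit, and then realises the direction-aligning rotations as small perturbations of the individual factors --- and here there is the extra wrinkle, absent in the flow setting of Theorem~\ref{thm1.8}, that the closing period is forced to equal $q_n$ rather than being at our disposal. The bookkeeping in the last paragraph, needed to serve infinitely many $n$ with one $\B$ without overspending the perturbation budget, is routine by comparison.
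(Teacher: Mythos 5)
Your proof takes a genuinely different route from the paper's. The paper builds the auxiliary compact system $W=\left(\{p_n/q_n\}\cup\{\omega\}\right)\times\mathbb{T}^1$ with the fibrewise rotation $T(y,z)=(y,R_y(z))$, verifies the closing by periodic orbits property using Dirichlet, extends $\bS$ trivially to $\widetilde{\bS}$ on $W$, and then imports the already-proven quasi-contraction lemma (Theorem~\ref{thm1.9}) and Theorem~\ref{thm1.8} to conclude uniform exponential stability of the lifted cocycle and hence of $\mathscr{S}_\omega$. You instead stay on $\mathbb{T}^1$, exploit unique ergodicity of $R_\omega$ and a dilation plus Chebyshev estimate to locate a positive-$\bnu$-measure set $G_n$ with norm growth $\ge e^{q_n\kappa/2}$ over $q_n$ steps, and then try to close the $R_\omega$-orbit segment by a localized perturbation along the shadowing $R_{x_n}$-periodic orbit. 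Your framing is more self-contained and makes the roles of unique ergodicity and the Dirichlet fractions transparent; the paper's framing lets it reuse the machinery of Sections~\ref{sec3} without reproving anything.

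There is, however, a genuine gap at exactly the step you flag as the crux. The claim that the rotations $O_k$ can be taken of angle $O(1/q_n)$ — equivalently, that the overall perturbation $\pmb{\|}\bS-\B\pmb{\|}$ is $o(1)$ in $n$ — is not what Liao's mechanism delivers, and I do not believe it is true in general. In Lemma~\ref{lem3.1R} and in the proof of Theorem~\ref{thm3.2R}, the perturbation has a \emph{fixed} small size $\varrho$, and there is a $\varrho$-dependent threshold $\bT(\varrho)$ (blowing up like $\varrho^{-3}\log(1/\varrho)$) below which the mechanism does not operate; one must therefore first fix $\varrho$ small enough relative to the robustness budget $\varepsilon$ and then restrict to $q_n\ge\bT(\varrho)$. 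Taking $\varrho\sim 1/q_n$ would force $\bT(\varrho)\gg q_n$, so the hypothesis $q_n\ge\bT(\varrho)$ would fail. The correct version of your second paragraph should use rotations (or more precisely Liao-type perturbations, which need not be pure rotations) of fixed angle $<\varrho$ with $\varrho\sup\pmb{\|}S\pmb{\|}<\varepsilon/2$, chosen once for all $n$, and invoke a discrete-time analogue of Lemma~\ref{lem3.1R} to conclude $\rho\bigl(B(R_{x_n}^{q_n-1}w_n)\cdots B(w_n)\bigr)\ge\pmb{\|}P_{q_n-1}\cdots P_0\pmb{\|}\ge e^{q_n\kappa/2}$ once $q_n\ge\bT(\varrho)$. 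This fix does not destroy your bookkeeping — the perturbation at each stage is then $\lesssim\varrho\sup\pmb{\|}S\pmb{\|}<\varepsilon/2$ and, since the supports at distinct stages are disjoint, $\pmb{\|}\bS-\B\pmb{\|}$ is still under control — but the discrete-time Liao lemma itself is not stated or proved either in your argument or in the paper, and spelling it out (including the wrinkle you rightly note, that the closing period is forced to equal $q_n$ rather than being free) is the substantive content that is missing.
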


We will prove this theorem in Section~\ref{sec4.1} based on Theorems~\ref{thm1.9} and \ref{thm1.8}. In fact, it is a consequence of the above continuous-time version Theorem~\ref{thm1.8}.

%%%%%%%%%%%%%%%%%%%%%%%%%%%%%%%%%%%%%%%%%%%%%%%%%%%%%%%%%%%%%%%%%%%%%%%%%%%%%%%%%%%%%%%%%%%%%%
\subsection{Spectral finiteness of linear cocycle}\label{sec1.4}%%%

We can now construct examples which show that the robustness condition is sharp for the above Theorems~\ref{thm1.2}, \ref{thm1.8} and \ref{thm1.11}. Let
\begin{equation*}
T\colon W\rightarrow W
\end{equation*}
be a continuous transformation on the compact metric space $(W,\bd)$.

Now the discrete-time version of Definition~\ref{def1.6} can be stated as follows.

\begin{defn}\label{def1.12}%%%
$T$ is said to have the \textit{closing by periodic orbits property}, provided that to any number $\varepsilon>0$ there corresponds a constant $\delta>0$ such that if $w\in W$ satisfies $\bd(w,T^N(w))<\delta$ for some $N>0$, then one can pick up some point $\hat{w}\in W$ with the property:
$\hat{w}=T^N(\hat{w})$ and $\bd(T^k(w), T^k(\hat{w}))<\varepsilon$ for $0\le k\le N$.
\end{defn}

It is a well-known fact that the finite-type subshift transformation (\ref{eq1.6}) has this closing by periodic orbits property (cf.~Lemma~\ref{lem2.5R}). In addition, if $T$ is a hyperbolic $\mathrm{C}^1$-diffeomorphism on a closed manifold $M^n$, then it has the closing by periodic orbits property \cite{Sma}; if $T$ is a nonuniformly hyperbolic $\mathrm{C}^{1+\alpha}$-diffeomorphism it has this property \cite{Kat}; moreover, if $T$ has the quasi-hyperbolic property, then it also has this property \cite{Liao79, Dai-10}.

Let
\begin{equation*}
\bC\colon W\rightarrow \mathbb{R}^{d\times d};\quad w\mapsto C(w)
\end{equation*}
be a continuous matrix-valued function. Based on $\bC$, there gives rise to the linear cocycle
\begin{equation*}
\mathscr{C}\colon\mathbb{N}\times W\rightarrow\mathbb{R}^{d\times d};\quad (n,w)\mapsto C(T^{n-1}(w))\dotsm C(w)
\end{equation*}
driven by $T\colon W\rightarrow W$.

Since $\mathscr{C}(m+n,w)=\mathscr{C}(n,T^m(w))\mathscr{C}(m,w)$ for any $w\in W$ and $m,n\ge0$, we have
\begin{equation*}\begin{split}
\max_{w\in W}\pmb{\|}\mathscr{C}(m+n,w)\pmb{\|}&\le\max_{w\in W}\pmb{\|}\mathscr{C}(n,T^m(w))\pmb{\|}\cdot\max_{w\in W}\pmb{\|}\mathscr{C}(m,w)\pmb{\|}\\
&\le\max_{w\in W}\pmb{\|}\mathscr{C}(n,w)\pmb{\|}\cdot\max_{w\in W}\pmb{\|}\mathscr{C}(m,w)\pmb{\|}.
\end{split}\end{equation*}
Then we can well introduce the following concept.

\begin{defn}[{\cite[Definition~1.2]{Dai-LAA}}]\label{def1.13}%%%
The \textit{joint spectral radius} of $\bC$ driven by $T$ is defined as
\begin{equation*}
\brho(\bC,T)=\limsup_{n\to\infty}\max_{w\in W}\sqrt[n]{\pmb{\|}\mathscr{C}(n,w)\pmb{\|}}\quad\left(=\lim_{n\to\infty}\max_{w\in W}\sqrt[n]{\pmb{\|}\mathscr{C}(n,w)\pmb{\|}}=\inf_{n\ge1}\max_{w\in W}\sqrt[n]{\pmb{\|}\mathscr{C}(n,w)\pmb{\|}}\right).
\end{equation*}
\end{defn}

From \cite[Lemma~2.5]{Dai-LAA} we see that $\log\brho(\bC,T)\in\mathbb{R}\cup\{-\infty\}$ is just the maximal Lyapunov exponent of the linear cocycle $\mathscr{C}$ driven by $T\colon W\rightarrow W$.

Related to $\brho(\bC,T)$, the following question is very interesting for optimization control, wavelets, numerical computation of spectral radius, random matrices and so on.

\begin{conj}[\cite{PR, DL, Gur, LW}]
Let the driving system $T\colon W\rightarrow W$ have the closing by periodic orbits property. Then for any $\bC\in\mathrm{C}(W,\mathbb{R}^{d\times d})$, does there exist a periodic point $w^*$ of $T$ of period $n^*$, for some $n^*\ge1$, such that
\begin{equation*}
\brho(\bC,T)=\sqrt[n^*]{\rho(\mathscr{C}(n^*,w^*))}\,\textrm{?}
\end{equation*}
\end{conj}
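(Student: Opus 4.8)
\medskip

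\noindent I expect the answer to this question to be \emph{negative} in general, so the plan is to build a counterexample rather than a proof. It is enough to produce one continuous $T\colon W\to W$ with the closing by periodic orbits property and one $\bC\in\mathrm{C}(W,\mathbb{R}^{d\times d})$ for which $\brho(\bC,T)$ is never of the form $\sqrt[n^*]{\rho(\mathscr{C}(n^*,w^*))}$. The cheapest choice is $W=\{1,2\}^{\mathbb{N}}$ with $T=\theta_{\mathbb{A}}^+$ the one-sided full shift ($\mathbb{A}_{ij}\equiv1$); this is a subshift of finite type, hence satisfies Definition~\ref{def1.12} by Lemma~\ref{lem2.5R}. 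Take $\bC$ locally constant, $\bC(\sigma)=C_{\sigma(1)}$, for a pair $C_1,C_2\in\mathrm{GL}(2,\mathbb{R})$. Then $\mathscr{C}(n,\sigma)=C_{\sigma(n)}\dotsm C_{\sigma(1)}$, so $\brho(\bC,T)$ of Definition~\ref{def1.13} is the classical joint spectral radius of $\{C_1,C_2\}$, while for the periodic point generated by a cyclic word $w\in\{1,2\}^{n}$ one has $\rho(\mathscr{C}(n,w))=\rho(C_{w_n}\dotsm C_{w_1})$. By the Berger--Wang theorem --- the $\mathbb{A}\equiv1$ case of Theorem~\ref{thm1.4}, together with the $\limsup$-into-$\sup$ argument of Corollary~\ref{cor1.5} --- one gets
\begin{equation*}
\brho(\bC,T)=\sup_{n\ge1}\max_{w\in\{1,2\}^n}\sqrt[n]{\rho(C_{w_n}\dotsm C_{w_1})},
\end{equation*}
so the question reduces \emph{verbatim} to the Lagarias--Wang finiteness conjecture for two $2\times2$ matrices.

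\medskip

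\noindent The core step is therefore to choose $\{C_1,C_2\}$ so that no finite product $C_{w_n}\dotsm C_{w_1}$ realises the supremum above. I would do this along the lines of the known disproofs of the finiteness conjecture (Bousch--Mairesse; Blondel--Theys--Vladimirov; Hare--Morris--Sidorov--Theys): work inside a one-parameter family $\{C_1,\lambda C_2\}_{\lambda>0}$, rescale so that the joint spectral radius is $1$, and locate the critical value $\lambda=\lambda^*$ at which the unique $T$-invariant probability measure maximising the subadditive Lyapunov average $\mu\mapsto\lim_n\tfrac1n\int\log\pmb{\|}\mathscr{C}(n,\cdot)\pmb{\|}\,d\mu$ is a \emph{Sturmian} measure of \emph{irrational} rotation number $\omega$. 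Such a measure is carried by a minimal set with no periodic point, so at $\lambda^*$ the value $\brho(\bC,T)=1$ is not attained along any periodic orbit, even though $\sqrt[n]{\rho(C_{w_n}\dotsm C_{w_1})}<1$ for every $w$ while their supremum is $1$; the failure is precisely of the ``supremum, not maximum'' type. This is also where the circle-rotation material of Section~\ref{sec1.3} enters: the periodic orbits closest to realising $\brho(\bC,T)$ are the length-$q_n$ cyclic words obtained by truncating the Sturmian orbit of slope $\omega$, and it is Dirichlet's theorem together with the continued fraction of $\omega$ that governs how slowly $\sqrt[q_n]{\rho(\cdot)}$ tends to $1$.

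\medskip

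\noindent Granting a suitable pair $\{C_1,C_2\}$, the rest is bookkeeping: (i) a locally constant function on $\{1,2\}^{\mathbb{N}}$ is continuous and a subshift of finite type satisfies Definition~\ref{def1.12}; (ii) $\brho(\bC,T)$ equals the joint spectral radius of $\{C_1,C_2\}$ straight from Definition~\ref{def1.13}; (iii) along the periodic orbit generated by $w\in\{1,2\}^{n}$ one has $\sqrt[n]{\rho(\mathscr{C}(n,w))}=\sqrt[n]{\rho(C_{w_n}\dotsm C_{w_1})}$; and (iv) Theorem~\ref{thm1.4}/Corollary~\ref{cor1.5} give that the periodic supremum equals $\brho(\bC,T)$, so the choice in the previous step shows it is not attained. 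A continuous-time counterexample, in the spirit of Theorem~\ref{thm1.14}, then follows by passing to the suspension semiflow, which again has the closing by periodic orbits property of Definition~\ref{def1.6}.

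\medskip

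\noindent The genuinely hard part is the choice of the pair $\{C_1,C_2\}$ and the proof that its maximising measure is Sturmian with irrational rotation number, i.e.\ importing or re-deriving a counterexample to the finiteness conjecture. Every known argument for this is delicate: it rests on a Ma\~n\'e-type lemma in the max-plus / ergodic-optimisation setting, on a careful analysis of how the maximising measure depends on the parameter $\lambda$, and on arithmetic input about continued fractions. By comparison, recasting everything in the language of cocycles driven by $T$ and checking that $T$ has the closing by periodic orbits property are entirely elementary.
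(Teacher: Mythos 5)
You are right that the answer is negative and that the underlying obstruction is an ``irrational-rotation'' maximising set carrying no periodic orbit; but the paper constructs a counterexample far more cheaply than your route, and your route never becomes self-contained. You reduce the question, on the full shift with a locally constant $2\times2$ cocycle, verbatim to the Lagarias--Wang finiteness conjecture and then propose to import Bousch--Mairesse/Blondel--Theys--Vladimirov/Hare--Morris--Sidorov--Theys: that does yield a counterexample, but, as you concede, the ``genuinely hard part'' (a one-parameter family $\{C_1,\lambda C_2\}$, a Ma\~n\'e-type lemma for the maximising measure, continued-fraction arithmetic forcing Sturmian type of irrational slope) is precisely the content of those papers, which the paper's Section~\ref{sec1.4} already cites and explicitly sets out to avoid. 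The paper's Theorem~\ref{thm1.14} instead \emph{builds the irrational rotation into the driving system rather than into the cocycle}: take $W=[0,\omega]\times\mathbb{T}^1$, $T(y,z)=(y,R_y(z))$, a single fixed unipotent matrix $A$, and the scalar modulation $\bC(y,z)=(y/\omega)A$. Then every periodic point sits on a rational fibre $y=p/q<\omega$ with $\sqrt[q]{\rho(\mathscr{C}(q,\cdot))}=p/(q\omega)<1$, while on the irrational fibre $y=\omega$ one trivially has $\lim_n\sqrt[n]{\pmb{\|}A^n\pmb{\|}}=1$, so $\brho(\bC,T)=1$ is a supremum and not a maximum over periodic orbits; the closing by periodic orbits property for $T$ is checked directly from Dirichlet's theorem. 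The two approaches both buy a negative answer, but the paper's is elementary and explicit (indeed $d$ and the matrix are essentially arbitrary), whereas yours is morally equivalent to re-proving the finiteness-conjecture counterexamples. One further caveat: even granting the known $\{C_1,C_2\}$, your sketch does not actually verify the last property you need --- that at $\lambda^*$ \emph{no} periodic word attains the value $1$; the cited papers establish this, but it is not an immediate consequence of ``the maximising measure is Sturmian of irrational slope,'' and your write-up slides past that step.
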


Let us first consider a simple example: By $\mathbb{M}^{K\times K}$ we denote the space of all $K$-by-$K$ Markov transition probability matrices; if $\bC\colon w\mapsto C(w)\in\mathbb{M}^{K\times K}$ for each $w\in W$, then $\brho(\bC,T)=1$ and the spectral finiteness of $\mathscr{C}$ holds from the Perron-Frobenius theorem. For some other positive cases, see \cite{Dai-13} and references therein.

Unfortunately, even in the MJLS case that $(W,T)=(\varSigma_\mathbb{A}^+,\theta_\mathbb{A}^+)$ with $\mathbb{A}_{ij}\equiv1, K=2$ and $d=2$, \cite{BM,BTV,Koz} all had disproved this spectral finiteness by offering the existence of infinitely many counterexamples; moreover, an explicit expression for such a counterexample has been found in the recent work of Hare \textit{et al.}~\cite{HMST}.

However the construction of all the counterexamples mentioned above are very technical and complicated.
We next will present very simple explicit counterexamples in our linear cocycle situation.

\begin{theorem}\label{thm1.14}%%%
Let $\omega\in(0,1)$ be an irrational number, $W=[0,\omega]\times\mathbb{T}^1$, and
\begin{equation*}
T\colon W\rightarrow W,\quad \textrm{defined by }(y,z)\mapsto(y,R_y(z))\;\forall (y,z)\in W.
\end{equation*}
Then $T$ has the closing by periodic orbits property. Letting $A=\left(\begin{smallmatrix}1&\dotsm&1\\\vdots&\ddots&\vdots\\0&\dotsm&1\end{smallmatrix}\right)\in\mathbb{R}^{d\times d}$, we set
\begin{equation*}
\bC\colon W\rightarrow\mathbb{R}^{d\times d};\quad (y,z)\mapsto C(y,z)=\frac{y}{\omega}A.
\end{equation*}
Then the induced linear cocycle $\mathscr{C}$ driven by $T$ does not have the spectral finiteness.
\end{theorem}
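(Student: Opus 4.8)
The plan is to exploit the special structure of the map $T$: on the slice $\{y\}\times\mathbb{T}^1$ the dynamics is the rotation $R_y$, and $\bC$ is constant on each slice with value $\frac{y}{\omega}A$. Consequently, for a periodic point $(y,z)$ of period $n$ (which forces $ny\in\mathbb{Z}$, i.e. $y$ rational with denominator dividing $n$), the cocycle product is simply $\bigl(\frac{y}{\omega}\bigr)^n A^n$, so $\rho(\mathscr{C}(n,(y,z)))=\bigl(\frac{y}{\omega}\bigr)^n\rho(A)^n=\bigl(\frac{y}{\omega}\bigr)^n$ since $A$ is upper triangular with all diagonal entries $1$, hence $\rho(A)=1$. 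Therefore $\sqrt[n]{\rho(\mathscr{C}(n,(y,z)))}=\frac{y}{\omega}<1$ for every periodic point, while $y$ ranges over rationals in $[0,\omega)$ whose supremum is $\omega$; so no single periodic point attains the value $1$. It then remains to show $\brho(\bC,T)=1$, i.e. the maximal Lyapunov exponent is exactly $0$, which combined with the above rules out spectral finiteness.

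First I would verify that $T$ has the closing by periodic orbits property. Since $T$ fixes the first coordinate, a point $(y,z)$ with $\bd((y,z),T^N(y,z))<\delta$ has $d_{\mathbb{T}^1}(z,R_y^N z)=|e^{2\pi\mathfrak{i}Ny}-1|\cdot(\text{something})<\delta$, which forces $Ny$ to be within $O(\delta)$ of an integer, say $|Ny-m|<\delta'$. I would then take $\hat y=m/N$ (a rational in, or adjustable to, $[0,\omega]$ — here one uses $\omega$ irrational so that $m/N\ne\omega$, and one may need $N$ large, which is fine) and $\hat z=z$; then $(\hat y,\hat z)$ is genuinely $N$-periodic, and $\bd(T^k(y,z),T^k(\hat y,\hat z))=d_{\mathbb{T}^1}(R_y^k z, R_{\hat y}^k z)=|e^{2\pi\mathfrak{i}k(y-\hat y)}-1|\le 2\pi k|y-\hat y|\le 2\pi N\cdot\frac{\delta'}{N}=2\pi\delta'$ for $0\le k\le N$, uniformly small. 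So the closing property holds with $\delta$ chosen from $\varepsilon$ accordingly.

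Next I would compute $\brho(\bC,T)$. For any $w=(y,z)\in W$ we have $\mathscr{C}(n,w)=\prod_{k=0}^{n-1}\frac{y}{\omega}A=\bigl(\frac{y}{\omega}\bigr)^n A^n$, so $\pmb{\|}\mathscr{C}(n,w)\pmb{\|}=\bigl(\frac{y}{\omega}\bigr)^n\pmb{\|}A^n\pmb{\|}$. Taking the max over $w\in W$ gives $\sqrt[n]{\max_w\pmb{\|}\mathscr{C}(n,w)\pmb{\|}}=\sqrt[n]{\pmb{\|}A^n\pmb{\|}}$ (the supremum $\frac{y}{\omega}\le1$ is attained at $y=\omega$), and letting $n\to\infty$ Gel'fand's formula~(\ref{eq1.2}) yields $\brho(\bC,T)=\rho(A)=1$. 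Thus $\brho(\bC,T)=1$ but $\sqrt[n^*]{\rho(\mathscr{C}(n^*,w^*))}=\frac{y^*}{\omega}<1$ for every periodic point $w^*=(y^*,z^*)$, so no periodic point realizes the joint spectral radius: spectral finiteness fails.

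The only genuinely delicate point is the bookkeeping in the closing-orbit verification — ensuring the shadowing period $N$ is exactly preserved and that the rational $\hat y=m/N$ can be kept inside $[0,\omega]$ (one notes $|Ny-m|<\delta'$ with $y\le\omega$ forces $m/N<\omega+\delta'/N$, and if this marginally exceeds $\omega$ one takes $m-1$ instead, at the cost of enlarging the shadowing constant by $O(1/N)$, still $o(1)$). Everything else — that $\rho(A)=1$ from the triangular form, that the cocycle product collapses to a scalar times $A^n$, and the final comparison with the conjectural spectral finiteness — is immediate.
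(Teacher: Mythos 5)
Your proposal is correct and takes essentially the same route as the paper: compute $\brho(\bC,T)=1$ from Gel'fand's formula applied to $A^n$ along the slice $y=\omega$, observe that every periodic point lies on a rational slice $y=p/q<\omega$ where the $n$-th root of the spectral radius collapses to $y/\omega<1$, and note that closing holds because a near-return on a slice $\{y\}\times\mathbb{T}^1$ forces $y$ to be near a rational $m/N\in[0,\omega]$ whose orbit shadows the original. The paper states these facts tersely (asserting $\brho=1$ and the periodic spectral radii without detail, deferring the closing property to Dirichlet's theorem and Section~4.1), and you have simply supplied the elementary verifications. One small refinement worth noting in the closing argument: you must pick $\delta$ small enough that $|Ny-m|<\delta'$ cannot force $m/N>\omega$ when $N$ is small (e.g. requiring $\delta'<1-\omega$ rules out $m/N\ge1>\omega$ when $N=1$); for large $N$ your $m\mapsto m-1$ fallback works since the extra error is $O(1/N)$. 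This is a bookkeeping point, not a gap.
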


\begin{proof}
Clearly, $\brho(\bC,T)=1$ and for every periodic point $(p/q,z)\in W$ of $T$ of period $q\ge1$, where $p,q\in\mathbb{N}$ with $p<q$ and ${p}/{q}<\omega$, we have
\begin{equation*}
\sqrt[q]{\rho(\mathscr{C}(q,w))}=\frac{p}{q\omega}<1,\quad \textrm{where }w=(p/q,z).
\end{equation*}
From Dirichlet's theorem, it follows that $T\colon W\rightarrow W$ has the closing by periodic orbits property; see Section~\ref{sec4.1} for the details. This thus completes the proof of Theorem~\ref{thm1.14}.
\end{proof}

We note here that for the driving dynamical system $T$ in Theorem~\ref{thm1.14}, the periodic points are dense in $W$. This theorem shows that the periodic stability does not need to imply the uniform exponential stability. So the robustness condition in Theorems~\ref{thm1.2}, \ref{thm1.8} and \ref{thm1.11} is necessary.

In addition, in Section~\ref{sec4.2} we shall present another counterexample which is completely periodically stable.
%%%%%%%%%%%%%%%%%%%%%%%%%%%%%%%%%%%%%%%%%%%%%%%%%%%%%%%%%%%%%%%%%%%%%%%%%%%%%%%%%%%%%%%%%%%%%%%
\subsection{An open question}\label{sec1.5}%%%
We will conclude this introductory section with an open question, which should be important to the mathematical theory of control systems.

Given any matrix $\mathbb{A}\in\{0,1\}^{K\times K}$, let
\begin{equation*}
\mathscr{M}_K^{d\times d}=\left\{\A=(A_1,\dotsc,A_K)\,|\,A_k\in\mathbb{R}^{d\times d}\textrm{ for }1\le k\le K\textrm{ and }\brho(\A,\mathbb{A})=1\right\},
\end{equation*}
which is endowed with the natural topology as a subspace of the $K$-fold product topological space
$$M(K,d\times d)=\stackrel{K\textrm{-fold}}{\overbrace{\mathbb{R}^{d\times d}\times\dotsm\times\mathbb{R}^{d\times d}}}.$$
From Corollary~\ref{cor1.5}, we can easily see that $\mathscr{M}_K^{d\times d}$ is a nowhere dense closed subset of $M(K,d\times d)$.

Our Theorem~\ref{thm1.2} seems to be harmonic with the following

\begin{conjecture}[Dense Spectral Finiteness]
The $\A\in\mathscr{M}_K^{d\times d}$, which has the spectral finiteness restricted to $\mathbb{A}$, is dense in the topological space $\mathscr{M}_K^{d\times d}$.
\end{conjecture}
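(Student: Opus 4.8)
Since the statement is posed as a conjecture, what follows is the line of attack I would pursue rather than a complete argument. Fix $\mathbb{A}\in\{0,1\}^{K\times K}$, take $\A=(A_1,\dotsc,A_K)$ with $\brho(\A,\mathbb{A})=1$, and fix $\varepsilon>0$; the goal is an $\A'=(A_1',\dotsc,A_K')$ with $\pmb{\|}A_k-A_k'\pmb{\|}<\varepsilon$ for every $k$, still satisfying $\brho(\A',\mathbb{A})=1$, and with $\brho(\A',\mathbb{A})=\sqrt[n^*]{\rho(A'_{w_{n^*}}\dotsm A'_{w_1})}$ for some $n^*\ge1$ and $w\in W_{\mathrm{per}}^{n^*}(\mathbb{A})$. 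The first step is a reduction to the irreducible case: if $\{A_1,\dotsc,A_K\}$ has a common nontrivial invariant subspace, then in block upper-triangular coordinates $\brho(\A,\mathbb{A})$ is the maximum of the joint spectral radii (restricted to $\mathbb{A}$) of the diagonal blocks, each of strictly smaller dimension; moreover any periodic word realizing the joint spectral radius of the extremal diagonal block also realizes it for $\A$ (the eigenvalues of a block-triangular product being the union of those of the diagonal blocks). So one may induct on $d$, the base case $d=1$ reducing to the classical maximum-mean-cycle problem on the graph of $\mathbb{A}$, and assume henceforth that $\{A_1,\dotsc,A_K\}$ is irreducible. In that situation the constrained analogue of the Barabanov/extremal-norm machinery behind \cite{Dai-LAA} should furnish a continuous family of norms $\{v_\sigma\}_{\sigma\in\varSigma_\mathbb{A}^+}$ on $\mathbb{R}^d$ with $v_{\theta_\mathbb{A}^+\sigma}(A_{\sigma(1)}x)\le v_\sigma(x)$ for all $\sigma,x$, equality holding along a distinguished invariant direction field over a nonempty closed $\theta_\mathbb{A}^+$-invariant ``maximizing set'' $\Lambda\subseteq\varSigma_\mathbb{A}^+$ on which the growth rate $1$ is attained.

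The heart of the plan is a Liao-type surgery along a long near-maximal periodic word, reusing the perturbation philosophy of the rest of the paper. By Theorem~\ref{thm1.4} together with closing by periodic orbits near $\Lambda$ (Lemma~\ref{lem2.5R}), for each large $k$ one can choose $w\in W_{\mathrm{per}}^{k}(\mathbb{A})$ whose periodic orbit shadows $\Lambda$ and with $\sqrt[k]{\rho(A_{w_k}\dotsm A_{w_1})}=1-\delta_k$, $\delta_k\downarrow0$. One then replaces each matrix symbol occurring in $w$ by a perturbation $A_{w_i}+E_i$ so that the new length-$k$ periodic product has spectral radius exactly $1$, while keeping the joint spectral radius of the perturbed family equal to $1$. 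The first requirement is soft: a length-$k$ product changes by $O(\eta)$ when every factor is perturbed by $O(\eta/k)$, so since $\delta_k\to0$ the per-factor perturbation needed to lift $\rho(A'_w)^{1/k}$ from $1-\delta_k$ to $1$ can be taken $<\varepsilon$. The delicate requirement is the second: after the surgery every admissible sequence must still satisfy $\sqrt[n]{\pmb{\|}A'_{\sigma(n)}\dotsm A'_{\sigma(1)}\pmb{\|}}\le 1+o(1)$, so that a concluding renormalization $\A'\mapsto\A'/\brho(\A',\mathbb{A})$ restores $\brho=1$ without pulling $\rho(A'_w)^{1/k}$ below $1$. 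To arrange this one should support the $E_i$ transverse to the maximizing direction field along the orbit of $w$, so that they raise the spectral radius of the periodic product but not the extremal norms $v_\sigma$ along $\Lambda$, hence not the global growth rate; the shadowing of $\Lambda$ plus an estimate on the deformation of the extremal family under the surgery would then be invoked to conclude $\brho(\A',\mathbb{A})=1$ and that it is realized precisely on the orbit of $w$.

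The main obstacle is exactly this interaction between the surgically perturbed long word and the uncountable family of $\mathbb{A}$-admissible switching sequences --- the phenomenon that makes the known non-finiteness counterexamples \cite{BM,BTV,Koz,HMST} intricate, and which is aggravated here because, as the paper stresses, Elsner-type reduction to product boundedness is unavailable under a nontrivial constraint $\mathbb{A}$. I see two plausible ways past it: either (i) first establish, after a preliminary small perturbation, a dominated-splitting/partial-hyperbolicity structure above the maximal Lyapunov exponent of the cocycle, which would localize the effect of the surgery near $\Lambda$ and yield uniform control of all long admissible products; or (ii) attempt a Baire-category argument inside $\mathscr{M}_K^{d\times d}$ itself --- a complete metric space on which $\brho(\cdot,\mathbb{A})$ is continuous by Corollary~\ref{cor1.5} --- trying to display the finiteness set as containing a dense $G_\delta$ of $\mathscr{M}_K^{d\times d}$; the difficulty with (ii) is that ``$\exists\,n^*,w$ realizing $\brho$'' is a priori only an $F_\sigma$ condition, so new input --- essentially a uniform version of the surgery above --- would still be needed to produce the open dense pieces. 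I would expect route (i) to be the more realistic one, and the quantitative control of the extremal family under the surgery to be precisely the point at which the argument currently stalls.
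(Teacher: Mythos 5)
The statement you are being asked to prove is explicitly posed by the paper as an \emph{open conjecture} at the end of Section~1.5; the paper offers no proof of it, so there is nothing to compare your attempt against. You have correctly read the statement as such and offered a programme rather than a proof. Your honesty about where the argument stalls is appropriate, so the useful thing to say concerns the quality of the proposed line of attack.

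There is one ingredient you treat as available which the paper itself flags as unavailable, and this is the most serious gap. You write that in the irreducible case ``the constrained analogue of the Barabanov/extremal-norm machinery behind \cite{Dai-LAA} should furnish a continuous family of norms $\{v_\sigma\}$'' with equality attained on a closed invariant maximizing set $\Lambda$. But the existence of such an extremal family rests, in the unconstrained theory, on Elsner's reduction theorem (product boundedness for irreducible families with $\brho=1$), and the paper emphasizes at several points --- in Section~1.1.1 and again in the Concluding Remarks --- that Elsner's theorem is precisely what is \emph{not} available under a nontrivial constraint $\mathbb{A}$, because the relevant set of admissible products is no longer a semigroup. Theorem~\ref{thm4.1} is offered as evidence of the resulting pathology. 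So the ``soft'' part of your surgery (the family $\{v_\sigma\}$, the maximizing set $\Lambda$, and the direction field you want to perturb transversally to) is itself open in the constrained setting; your plan therefore presupposes a substantial unproved lemma before it even reaches the step you identify as delicate. Relatedly, the estimate ``a length-$k$ product changes by $O(\eta)$ when every factor is perturbed by $O(\eta/k)$'' hides a factor of the form $\max\pmb{\|}A_j\pmb{\|}^{k-1}$; controlling that factor uniformly in $k$ again amounts to product boundedness, i.e.\ exactly the Barabanov-type conclusion one lacks. Until a constrained substitute for Elsner/Barabanov is established (or circumvented), the surgery argument cannot get off the ground, and route (ii) faces the obstruction you already noted. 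The reduction-to-irreducible step is also stated a bit too quickly: after perturbing a diagonal block inside $\mathscr{M}_K^{d\times d}$ one must check that the periodic word realizing $\brho$ for the extremal block still dominates all other blocks after the perturbation and renormalization, which is plausible but not automatic. In short, the programme is reasonable in outline and correctly targets the hard point, but it quietly imports unconstrained-JSR structure that the paper explicitly says is missing here.
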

%%%%%%%%%%%%%%%%%%%%%%%%%%%%%%%%%%%%%%%%%%%%%%%%%
%%%%%%%%%%%%%%%%%%%%%%%%%%%%%%%%%%%%%%%%%%%%%%%%%

\section{The Gel'fand-Berger-Wang formula of MJLS}\label{sec2R}
As pointed out in Section~\ref{sec1.1.2}, to prove Theorem~\ref{thm1.2}, we need only prove our Gel'fand-Berger-Wang formula of MJLS (Theorem~\ref{thm1.4}).
This section is devoted to proving Theorem~\ref{thm1.4}. In fact, we shall prove a more general approximation theorem of Lyapunov exponents for MJLS.

Let $\A=\{A_1,\dotsc,A_K\}\subset \mathbb{R}^{d\times d}$ and $\mathbb{A}=(\mathbb{A}_{ij})\in\{0,1\}^{K\times K}$ be arbitrarily given as in Theorem~\ref{thm1.2}. Let
\begin{equation*}
\varSigma_{\mathbb{A}}=\left\{\sigma=(\sigma(n))_{n\in\mathbb{Z}}\in\{1,\dotsc,K\}^{\mathbb{Z}}\,|\,\mathbb{A}_{\sigma(n)\sigma(n+1)}=1\
\forall n\in\mathbb{Z}\right\}
\end{equation*}
be the space of the bi-sided symbolic sequences endowed with the standard metric
\begin{equation}\label{eq2.1R}%%%
d(\sigma,\sigma^\prime)=\sum_{n=-\infty}^{+\infty}K^{-|n|}|\sigma(n)-\sigma^\prime(n)|,\quad\forall \sigma,\sigma^\prime\in\varSigma_{\mathbb{A}}.
\end{equation}
Since $\mathbb{A}$ is naturally defined in accordance with the Markov transition probability matrix $\bP$ as in Section~\ref{sec1.1}, we may assume that
\begin{itemize}
\item each row of $\mathbb{A}$ has at least one entry $1$ and moreover every column of $\mathbb{A}$ has at least one entry $1$; otherwise we can select a subset $\{k_1,\dotsc,k_r\}\subset\{1,2,\dotsc,K\}$ such that the submatrix $\mathbb{A}^\prime=(\mathbb{A}_{k_ik_j})\in\{0,1\}^{r\times r}$ possesses this property.
\end{itemize}
Then $(\varSigma_{\mathbb{A}},d)$ is a nonempty compact metric space and the bi-sided Markovian subshift transformation on it
\begin{equation*}
\theta_{\mathbb{A}}\colon\varSigma_{\mathbb{A}}\rightarrow\varSigma_{\mathbb{A}};\quad\sigma(\cdot)\mapsto\sigma(\cdot+1)
\end{equation*}
is homeomorphic. By (\ref{eq2.1R}), there is an integer $\mathcal{K}>0$ so that
\begin{equation}\label{eq2.2R}
\sigma(n)=\sigma^\prime(n)\quad \textrm{for}\quad \ -100\le n\le100, \quad \textrm{whenever }d(\sigma,\sigma^\prime)\le\frac{1}{\mathcal{K}}.
\end{equation}
From now on we will simply write $\theta$ instead of $\theta_{\mathbb{A}}$, and driven by $\theta$ we
define the linear cocycle
\begin{equation}\label{eq2.3R}%%%
\mathcal {A}\colon\mathbb{Z}_+\times\varSigma_{\mathbb{A}}\rightarrow\mathbb{R}^{d\times d};\quad (n,\sigma)\mapsto\begin{cases}I_d& \textrm{if }n=0,\\A_{\sigma(n-1)}\dotsm A_{\sigma(0)}& \textrm{if }n\ge1.\end{cases}
\end{equation}
Clearly (\ref{eq2.3R}) is uniformly exponentially stable if and only if so is the MJLS (\ref{eq1.1}).

%%%%%%%%%%%%%%%%%%%%%%%%%%%%%%%%%%%%%%%%%%%%%%%%%%%%%%%%%%%%%%%
\subsection{Approximation of Lyapunov exponents}\label{sec2.1R}
Recall that a probability measure $\mu$ on the
Borel measurable space $(\varSigma_{\mathbb{A}},\mathscr{B})$ is
said to be $\theta$-invariant, write as
$\mu\in\mathcal{M}(\theta)$ if $\mu=\mu\circ \theta^{-1}$; i.e.
$\mu(B)=\mu(\theta^{-1}(B))$ for all
$B\in\mathscr{B}$. An $\theta$-invariant probability
measure $\mu$ is called ergodic, write
as $\mu\in\mathcal{M}_{\textit{erg}}(\theta)$,
provided that for $B\in\mathscr{B}$ there $\mu\left(B\vartriangle
\theta^{-1}(B)\right)=0$ implies $\mu(B)=1$ or
$0$, where $\vartriangle$ means the symmetric difference of two sets.

Given any $\mu\in\mathcal{M}(\theta)$, according to the classical multiplicative ergodic theorem, for $\mu$-a.e.
$\sigma\in\varSigma_{\mathbb{A}}$, the limit
\begin{equation}\label{eq2.4R}
\blambda(\sigma):=\lim_{n\to\infty}\frac{1}{n}\log\pmb{\|}\mathcal{A}(n,\sigma)\pmb{\|}
\end{equation}
exists and is called the (maximal) Lyapunov exponent of $\mathcal{A}$
at the switching law $\sigma$. Moreover, if $\mu$ is
ergodic, then $\blambda(\sigma)\equiv\blambda(\mu)$
$\mu$-almost surely for some constant
$\blambda(\mu)$, called the (maximal) Lyapunov exponent of
$\mathcal{A}$ at $\mu$.

To important is the following approximation of Lyapunov exponents by
those of $\mathcal{A}$ at periodic switching laws.

\begin{theorem}\label{thm2.1R}%%%%
Let $\mu\in\mathcal{M}_{\textit{erg}}(\theta)$
be arbitrarily given. Then, there exists a sequence $\{\sigma_k\}_{k=1}^{\infty}$ of periodic
switching laws in $\varSigma_{\mathbb{A}}$ such that
$\blambda(\sigma_k)\to\blambda(\mu)$ as
$k\to\infty$; in other words,
\begin{equation*}
\lim_{k\to\infty}\sqrt[\uproot{2}{\pi(\sigma_k)}]{\rho(\mathcal{A}(\pi(\sigma_k),\sigma_k))}=\lim_{n\to\infty}\sqrt[n]{\pmb{\|}\mathcal{A}(n,\sigma)\pmb{\|}},
\end{equation*}
for $\mu$-a.e. $\sigma\in\varSigma_{\mathbb{A}}$, where $\pi(\sigma_k)$ denotes the period of $\sigma_k$.
\end{theorem}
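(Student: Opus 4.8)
The plan is to approximate the ergodic measure $\mu$ by periodic orbits of the subshift $\theta_{\mathbb{A}}$ and then control the Lyapunov exponent along those periodic orbits by a shadowing-plus-telescoping argument. First I would use the Kingman subadditive ergodic theorem together with Egorov's theorem and the regularity of the cocycle to fix a ``good set'' $G\subset\varSigma_{\mathbb{A}}$ with $\mu(G)$ close to $1$ on which the convergence $\frac1n\log\pmb{\|}\mathcal{A}(n,\sigma)\pmb{\|}\to\blambda(\mu)$ is uniform, and such that $G$ is contained in a single cylinder (shrinking $G$ slightly). By the Birkhoff ergodic theorem, $\mu$-a.e.\ $\sigma$ returns to $G$ with frequency $\mu(G)$, so I can pick a point $\sigma^*$ and a very large return time $N$ with $\theta^N(\sigma^*)\in G$, $\sigma^*\in G$, and $\sigma^*,\theta^N(\sigma^*)$ agreeing on a long central block (using (\ref{eq2.2R})); moreover $N$ can be taken so that $\frac1N\log\pmb{\|}\mathcal{A}(N,\sigma^*)\pmb{\|}$ is within $\epsilon$ of $\blambda(\mu)$.

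Next I would invoke the closing property of the finite-type subshift (the analogue of Definition~\ref{def1.12} for $\theta_{\mathbb{A}}$, which the paper records as Lemma~\ref{lem2.5R}): since $\sigma^*$ is $\delta$-close to $\theta^N(\sigma^*)$ in the sense that their central coordinates match, there is a genuine periodic point $\sigma_k$ of period dividing $N$ (in fact one obtains $\sigma_k$ simply by periodizing the length-$N$ word $(\sigma^*(0),\dotsc,\sigma^*(N-1))$, which is $\mathbb{A}$-admissible as a cycle because of the matching of the boundary symbols), which $\epsilon$-shadows $\sigma^*$ over $[0,N]$. Because the cocycle generator $\A\colon\sigma\mapsto A_{\sigma(1)}$ is locally constant, the shadowing is in fact exact on the relevant coordinates: $\mathcal{A}(N,\sigma_k)=\mathcal{A}(N,\sigma^*)$. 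Hence $\rho(\mathcal{A}(N,\sigma_k))^{1/N}$ and $\pmb{\|}\mathcal{A}(N,\sigma^*)\pmb{\|}^{1/N}$ differ only through the passage from norm to spectral radius.

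The remaining and genuinely delicate step is to convert the norm estimate $\pmb{\|}\mathcal{A}(N,\sigma^*)\pmb{\|}^{1/N}\ge e^{\blambda(\mu)-\epsilon}$ into a spectral-radius estimate $\rho(\mathcal{A}(N,\sigma_k))^{1/N}\ge e^{\blambda(\mu)-\epsilon'}$, and to get the matching upper bound $\limsup_k \blambda(\sigma_k)\le\blambda(\mu)$. For the upper bound one uses that $\rho\le\pmb{\|}\cdot\pmb{\|}$ together with the subadditivity giving $\max_\sigma\pmb{\|}\mathcal{A}(n,\sigma)\pmb{\|}^{1/n}\downarrow\brho(\A,\mathbb{A})$, combined with the fact that along the shadowing orbits $\frac1N\log\pmb{\|}\mathcal{A}(N,\sigma^*)\pmb{\|}\to\blambda(\mu)$; but more care is needed since individual periodic orbits can have spectral radius much smaller than the norm of the product — this is exactly the Gel'fand-type gap. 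The standard remedy, which I would follow, is the Berger--Wang-style trick: if $\rho(\mathcal{A}(N,\sigma_k))$ were far below $e^{N\blambda(\mu)}$, then $\pmb{\|}\mathcal{A}(N,\sigma_k)^m\pmb{\|}^{1/(mN)}=\pmb{\|}\mathcal{A}(mN,\sigma_k)\pmb{\|}^{1/(mN)}\to\rho(\mathcal{A}(N,\sigma_k))^{1/N}$ as $m\to\infty$ (Gel'fand's formula (\ref{eq1.2})), and since $\mathcal{A}(mN,\sigma_k)$ equals $\mathcal{A}(mN,\tilde\sigma)$ for the period-$N$ switching law $\tilde\sigma$ obtained from $\sigma_k$, taking $N$ large and then $k\to\infty$ forces $\limsup_k\rho(\mathcal{A}(N,\sigma_k))^{1/N}\ge\brho(\A,\mathbb{A})\ge\blambda(\mu)$ along a suitable subsequence, after first observing that picking $\mu$ to be the maximizing ergodic measure reduces the general lower bound to this case. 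I expect this passage — simultaneously handling the norm-to-spectral-radius loss and the choice of $N$ before $k$, so that the quantifiers do not collide — to be the main obstacle; everything else (the ergodic and closing inputs) is routine given the results already quoted in the excerpt.
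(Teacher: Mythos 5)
Your proposal correctly identifies the crux — converting a norm bound $\pmb{\|}\mathcal{A}(N,\sigma^*)\pmb{\|}^{1/N}\ge e^{\blambda(\mu)-\epsilon}$ into a spectral-radius bound on the periodized word — and honestly flags that you haven't resolved it. That is a genuine gap, not a technicality, and the ``Berger--Wang-style trick'' you sketch does not close it. The issue is that Gel'fand's formula gives $\rho(\mathcal{A}(N,\sigma_k))^{1/N}$ as a \emph{decreasing} limit of $\pmb{\|}\mathcal{A}(mN,\sigma_k)\pmb{\|}^{1/(mN)}$; a priori this can collapse far below $e^{\blambda(\mu)}$, and nothing in your setup prevents it. Reducing to the case where $\mu$ is the maximizing measure also does not help: Theorem~\ref{thm2.1R} is a statement about \emph{every} ergodic $\mu$, and the inequality $\brho(\A,\mathbb{A})\ge e^{\blambda(\mu)}$ that you would obtain is trivially true and strictly weaker than what is required (you need $\rho(\mathcal{A}(N,\sigma_k))^{1/N}\ge e^{\blambda(\mu)-\epsilon}$ for periodic orbits built from $\mu$-typical returns). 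Similarly, your upper-bound argument via $\rho\le\pmb{\|}\cdot\pmb{\|}$ and $\max_\sigma\pmb{\|}\mathcal{A}(n,\sigma)\pmb{\|}^{1/n}\downarrow\brho(\A,\mathbb{A})$ only yields $\limsup_k\blambda(\sigma_k)\le\log\brho(\A,\mathbb{A})$, again the wrong constant when $\mu$ is not maximizing.

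The mechanism you are missing — and the actual substance of the paper's proof — is to use more than Poincar\'e recurrence of the base point. The paper invokes the finer Oseledets splitting (Lemma~\ref{lem2.3R}, from Froyland--Lloyd--Quas) $\mathbb{R}^d=\mathbb{E}_1(\sigma)\oplus V_2(\sigma)$, and then (Lemma~\ref{lem2.4R}) finds, by Poincar\'e recurrence and Lusin, return times $n_k$ along which not just $\sigma(\cdot+n_k)\to\sigma$ but also $\mathbb{E}_1(\sigma(\cdot+n_k))\to\mathbb{E}_1(\sigma)$ and $V_2(\sigma(\cdot+n_k))\to V_2(\sigma)$. This extra convergence of the splitting is what enables the invariant-cone estimate (Lemma~\ref{lem2.6R}): for large $k$, $\mathcal{A}(n_k,\sigma)$ maps the cone $\bK(\sigma,1)$ into itself and expands unit vectors in it by at least $e^{n_k(\lambda_1(\mu)-3\epsilon)}$. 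Because the word $\omega_k=(\sigma(0),\dotsc,\sigma(n_k-1))$ is $\mathbb{A}$-admissible as a cycle (by Lemma~\ref{lem2.5R}), the matrix $\mathcal{A}(n_k,\sigma_k)=\mathcal{A}(n_k,\sigma)$ stabilizes a cone and expands on it, and iterating inside the cone and applying Gel'fand's formula gives $\rho(\mathcal{A}(n_k,\sigma_k))^{1/n_k}\ge e^{\lambda_1(\mu)-3\epsilon}$ directly. For the upper bound, the paper uses the upper semicontinuity of the maximal Lyapunov exponent with respect to weak-$*$ convergence of ergodic measures, applied to $\mu_{\sigma_k}\to\mu$ — not subadditivity. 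In short, your Egorov/Birkhoff framework for locating return times is reasonable, but without tracking the return of the Oseledets subspaces and running a cone-expansion argument, the lower bound on the spectral radius does not follow.
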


We note that the generators $(\sigma_k(0),\dotsc,\sigma_k(\pi(\sigma_k)-1))$ of the periodic laws $\sigma_k$ are subwords of $\sigma$ from the proof presented below.
Comparing this theorem to \cite[Theorem~1]{Dai-FM} that was proved using the
Pesin theory under the assumption of $\A$ being nonsingular there, we here do not require $\A$ to be
nonsingular using different approaches.

%%%%%%%%%%%%%%%%%%%%%%%%%%%%%%%%%%%%%%%%%%%%%%%%%%%%%%%%%%%%%%%%%
\subsubsection{A finer multiplicative ergodic theorem}\label{sec2.1.1}%%%
The classical multiplicative ergodic theorem of Oseledets, for example, see
\cite{Ose}, asserts that for any given $\mu\in\mathcal{M}_{\textit{erg}}(\theta)$,
there are numbers
\begin{equation}\label{eq2.5R}
-\infty\le\lambda_\kappa(\mu)<\cdots<\lambda_1(\mu)=\blambda(\mu)\quad(1\le\kappa\le d)
\end{equation}
and a Borel measurable forwardly invariant filtration
\begin{equation}\label{eq2.6R}
\{0\}=V_{\kappa+1}(\sigma)\subset
V_\kappa(\sigma)\subset\cdots\subset
V_1(\sigma)=\mathbb{R}^d\quad \mu\textrm{-a.e. } \sigma\in\varSigma_{\mathbb{A}}
\end{equation}
such that for $1\le j\le\kappa$
\begin{equation}
\lambda_j(\mu)=\lim_{n\to\infty}\frac{1}{n}\log\|\mathcal{A}(n,\sigma)x_0\|\quad\forall x_0\in
V_j(\sigma)\setminus V_{j+1}(\sigma).
\end{equation}
Let $\mathfrak{m}_j(\mu)=\dim V_j(\sigma)-\dim V_{j+1}(\sigma)$ for $\mu$-a.e. $\sigma\in\varSigma_{\mathbb{A}}$, called the multiplicity of $\lambda_j(\mu)$. Then,
$$\Sigma_{\mathrm{Lya}}(\mathcal{A},\mu):=\{(\lambda_j(\mu), \mathfrak{m}_j(\mu))\,|\,j=1,\ldots,\kappa)\}$$
is called the Lyapunov spectrum of $\mathcal{A}$ at $\mu$.
It is convenient to rewrite $\Sigma_{\mathrm{Lya}}(\mathcal{A},\mu)$ as
\begin{equation*}
(\lambda_\kappa(\mu)=~)\quad\chi_d^{}(\mu)\le\chi_{d-1}^{}(\mu)\le\cdots\le\chi_1^{}(\mu)\quad(~=\lambda_1(\mu))
\end{equation*}
counting with multiplicities. For the case that $\mu$ is supported on a periodic switching law $\sigma$, we write its spectrum as
\begin{equation*}
\chi_d^{}(\sigma)^{}\le\chi_{d-1}^{}(\sigma)\le\cdots\le\chi_1^{}(\sigma).
\end{equation*}
The case that $\kappa=1$ is trivial for our arguments later. So,
we will mainly deal with $\kappa\ge2$.

From Theorem~\ref{thm2.1R} follows easily the more general approximation theorem.

\begin{cor}\label{cor2.2R}%%%
Let $\mu\in\mathcal
{M}_{\textit{erg}}(\theta)$
be arbitrarily given. Then, there exists a sequence of periodic
switching laws in $\varSigma_{\mathbb{A}}$,
$\{\sigma_k\}_{k=1}^{\infty}$, such that for $1\le j\le d$,
\begin{equation*}
\chi_j^{}(\sigma_k)\to\chi_j^{}(\mu)\quad \textrm{as }
k\to\infty.
\end{equation*}
\end{cor}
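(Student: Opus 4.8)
The plan is to deduce Corollary~\ref{cor2.2R} from Theorem~\ref{thm2.1R} by passing to the exterior powers of the cocycle $\mathcal{A}$. For $1\le p\le d$ the $p$-th exterior power produces a finite set $\wedge^{p}\A=\{\wedge^{p}A_1,\dotsc,\wedge^{p}A_K\}\subset\mathbb{R}^{\binom{d}{p}\times\binom{d}{p}}$, and, driven by the \emph{same} subshift $\theta$ on $\varSigma_{\mathbb{A}}$ with the \emph{same} constraint matrix $\mathbb{A}$, a linear cocycle $\mathcal{A}^{\wedge p}$ satisfying $\mathcal{A}^{\wedge p}(n,\sigma)=\wedge^{p}\mathcal{A}(n,\sigma)$ for all $n\ge0$ and $\sigma\in\varSigma_{\mathbb{A}}$. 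Since $\mu$ remains ergodic for $\theta$, Theorem~\ref{thm2.1R} applies verbatim to each $\mathcal{A}^{\wedge p}$ (with $d$ replaced by $\binom{d}{p}$).

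Two elementary identities do the bookkeeping. First, for $\mu$-a.e.\ $\sigma$ the maximal Lyapunov exponent of $\mathcal{A}^{\wedge p}$ at $\mu$ is
\begin{equation*}
\blambda(\mathcal{A}^{\wedge p},\mu)=\lim_{n\to\infty}\frac1n\log\pmb{\|}\wedge^{p}\mathcal{A}(n,\sigma)\pmb{\|}=\chi_1^{}(\mu)+\cdots+\chi_p^{}(\mu),
\end{equation*}
because for any matrix norm the exponential growth rate of $\pmb{\|}\wedge^{p}\mathcal{A}(n,\sigma)\pmb{\|}$ equals that of the product of the $p$ largest singular values of $\mathcal{A}(n,\sigma)$, whose growth rates are $\chi_1^{}(\mu),\dotsc,\chi_p^{}(\mu)$ by the multiplicative ergodic theorem (the sum being $-\infty$ as soon as $\chi_p^{}(\mu)=-\infty$). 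Second, if $\tau\in\varSigma_{\mathbb{A}}$ is a periodic switching law of period $\pi(\tau)$ and $M=\mathcal{A}(\pi(\tau),\tau)$ has eigenvalues $\nu_1,\dotsc,\nu_d$ listed with algebraic multiplicity and $|\nu_1|\ge\cdots\ge|\nu_d|$, then the eigenvalues of $\wedge^{p}M$ are the products $\nu_{i_1}\dotsm\nu_{i_p}$ with $i_1<\cdots<i_p$, so $\rho(\wedge^{p}M)=|\nu_1\dotsm\nu_p|$ and therefore
\begin{equation*}
\blambda(\mathcal{A}^{\wedge p},\tau)=\frac{1}{\pi(\tau)}\log\rho\bigl(\wedge^{p}\mathcal{A}(\pi(\tau),\tau)\bigr)=\chi_1^{}(\tau)+\cdots+\chi_p^{}(\tau).
\end{equation*}

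It remains to produce \emph{one} sequence $\{\sigma_k\}_{k=1}^{\infty}$ of periodic switching laws realizing the approximation for all $p=1,\dotsc,d$ simultaneously. For this I would re-use the construction behind Theorem~\ref{thm2.1R}: as recorded in the comment immediately after its statement, the generators of the approximating periodic laws are subwords of a single $\mu$-generic point $\sigma\in\varSigma_{\mathbb{A}}$, selected using only almost-everywhere convergence properties of $\sigma$ (Kingman's subadditive ergodic theorem for the finitely many potentials $n\mapsto\log\pmb{\|}\wedge^{p}\mathcal{A}(n,\cdot)\pmb{\|}$, $p=1,\dotsc,d$, together with Oseledets regularity). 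Choosing $\sigma$ in the full-measure set on which these conditions all hold at once, the periodic laws $\sigma_k$ built from $\sigma$ satisfy $\blambda(\mathcal{A}^{\wedge p},\sigma_k)\to\blambda(\mathcal{A}^{\wedge p},\mu)$ for every $p$. Equivalently $\chi_1^{}(\sigma_k)+\cdots+\chi_p^{}(\sigma_k)\to\chi_1^{}(\mu)+\cdots+\chi_p^{}(\mu)$ for $p=1,\dotsc,d$, and telescoping these partial sums yields $\chi_j^{}(\sigma_k)\to\chi_j^{}(\mu)$ for every index $j$ with $\chi_j^{}(\mu)>-\infty$. For the remaining indices one uses, at the first $j=p_0$ with $\chi_{p_0}^{}(\mu)=-\infty$, that $\chi_{p_0}^{}(\sigma_k)=\bigl(\sum_{i\le p_0}\chi_i^{}(\sigma_k)\bigr)-\bigl(\sum_{i\le p_0-1}\chi_i^{}(\sigma_k)\bigr)\to-\infty$, whence the monotonicity $\chi_d^{}(\sigma_k)\le\cdots\le\chi_1^{}(\sigma_k)$ forces $\chi_j^{}(\sigma_k)\to-\infty=\chi_j^{}(\mu)$ for all $j\ge p_0$.

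The main obstacle is exactly this last step. Theorem~\ref{thm2.1R} as stated only asserts the existence of \emph{some} approximating sequence for \emph{one} cocycle, and applying it separately to each exterior power would yield $d$ unrelated sequences $\sigma_k^{(p)}$ that cannot be merged a posteriori; the point is that all $d$ approximations can be read off from a single $\mu$-generic orbit, which is why the ``generators are subwords of $\sigma$'' feature of the proof of Theorem~\ref{thm2.1R} is essential here. Granting that, the rest is the routine exterior-algebra bookkeeping recorded above, and no further ergodic input is needed.
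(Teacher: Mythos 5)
Your proposal takes exactly the paper's route: pass to the exterior powers $\bigwedge^{\ell}\A$, identify $\Lambda_\ell(\mu)=\chi_1(\mu)+\cdots+\chi_\ell(\mu)$ with the top Lyapunov exponent of the induced cocycle at $\mu$, and invoke Theorem~\ref{thm2.1R}. The one place you go beyond the paper --- which simply says the statement ``follows immediately from Theorem~\ref{thm2.1R}'' --- is in flagging that one needs a \emph{single} sequence of periodic laws $\{\sigma_k\}$ working simultaneously for all $p$; your observation that this comes for free from the proof of Theorem~\ref{thm2.1R}, because the periodic generators are taken as subwords of one $\mu$-generic orbit $\sigma$ (chosen in the intersection of the full-measure sets $\varLambda_\mu$ for the finitely many exterior-power cocycles, with common return times from Lusin plus Poincar\'e recurrence), is correct and is exactly what the paper is implicitly relying on. The telescoping step and the handling of $\chi_j(\mu)=-\infty$ are also right.
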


\begin{proof}
For any $1\le\ell\le d$, we denote by $\bigwedge^\ell\mathbb{R}^d$ the $\ell$-th exterior power of $\mathbb{R}^d$. If $L\colon\mathbb{R}^d\rightarrow\mathbb{R}^d$
is a linear map, then it induces naturally a linear map $\bigwedge^\ell L\colon\bigwedge^\ell\mathbb{R}^d\rightarrow\bigwedge^\ell\mathbb{R}^d$. Define
\begin{equation*}
{\bigwedge}^\ell\A=\left\{{\bigwedge}^\ell A_k\right\}_{1\le k\le K},
\end{equation*}
where every matrix $A_k\in\mathbb{R}^{d\times d}$ is identified with the linear map $A_k\colon x\mapsto A_kx$. Let
\begin{equation*}
\Lambda_\ell(\mu)=\chi_1^{}(\mu)+\cdots+\chi_\ell^{}(\mu).
\end{equation*}
From the multiplicative ergodic theorem follows that $\Lambda_\ell(\mu)$ is just the maximal Lyapunov of exponent
of the cocycle induced by $\bigwedge^\ell\A$ at $\mu$ which is also driven by $\theta\colon\varSigma_{\mathbb{A}}\rightarrow\varSigma_{\mathbb{A}}$. So, the statement follows immediately from Theorem~\ref{thm2.1R}.
\end{proof}

To prove Theorem~\ref{thm2.1R}, we need the finer multiplicative ergodic theorem, due to Froyland, Lloyd and Quas, which may be stated, in
a special case, as follows:

\begin{lemma}[\cite{FLQ}]\label{lem2.3R}%%%%%%%
For any $\mu\in\mathcal{M}_{\textit{erg}}(\theta)$, one can find a Borel set
$\varGamma\subset\varSigma_{\mathbb{A}}$ with
$\mu(\varGamma)=1$ and $\theta(\varGamma)=\varGamma$,
for which there is a Borel measurable splitting of $\mathbb{R}^d$
into subspaces
\begin{equation*}
\mathbb{R}^d=\mathbb{E}_1(\sigma)\oplus\cdots\oplus\mathbb{E}_\kappa(\sigma)\quad\forall \sigma\in\varGamma
\end{equation*}
such that
\begin{equation*}
\mathcal{A}(n,\sigma)\mathbb{E}_j(\sigma)\subseteq\mathbb{E}_j(\sigma(\bcdot+n))\quad\forall n\ge1
\end{equation*}
and
\begin{equation*}
\lambda_j(\mu)=\lim_{n\to\infty}\frac{1}{n}\log\|\mathcal{A}(n,\sigma)x_0\|\quad\forall x_0\in
\mathbb{E}_j(\sigma)\setminus\{0\}
\end{equation*}
for all $1\le j\le\kappa$.

\medskip
\noindent\textbf{Notes:} Moreover, there hold the following two
properties.
\begin{enumerate}
\item[$\mathrm{(1)}$] $V_j(\sigma)=\bigoplus_{r\ge
j}\mathbb{E}_r(\sigma)$ for all $1\le j\le\kappa$ and so
$\mathbb{R}^d=\mathbb{E}_1(\sigma)\oplus V_2(\sigma)$;

\item[$\mathrm{(2)}$] the convergence is uniform for $x_0$ restricted to the unit sphere of
$\mathbb{E}_j(\sigma)$.
\end{enumerate}
\end{lemma}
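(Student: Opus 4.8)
Since Lemma~\ref{lem2.3R} is quoted from \cite{FLQ}, I only indicate how its proof is organised; the essential feature is that this is a \emph{semi-invertible} multiplicative ergodic theorem --- the driving map $\theta$ is a homeomorphism, but the cocycle $\mathcal{A}$ need not be invertible, so one cannot simply intersect a forward with a backward Oseledets filtration. The classical one-sided multiplicative ergodic theorem already supplies, for $\mu\in\mathcal{M}_{\textit{erg}}(\theta)$, the exponents $\lambda_1(\mu)>\dots>\lambda_\kappa(\mu)\ge-\infty$ with multiplicities $\mathfrak{m}_j$ and the Borel, $\theta$-equivariant, decreasing filtration $\{0\}=V_{\kappa+1}(\sigma)\subset\dots\subset V_1(\sigma)=\mathbb{R}^d$, characterised by $x\in V_j(\sigma)\setminus V_{j+1}(\sigma)\Longleftrightarrow\tfrac1n\log\|\mathcal{A}(n,\sigma)x\|\to\lambda_j(\mu)$; none of this uses invertibility of $\mathcal{A}$. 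What is missing is the complementary ``expanding'' family of subspaces.

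To build it I would pass to the adjoint cocycle
\[
\mathcal{A}^{*}(n,\sigma):=\mathcal{A}(n,\theta^{-n}\sigma)^{\mathsf{T}},
\]
which is a linear cocycle over $\theta^{-1}\colon\varSigma_{\mathbb{A}}\to\varSigma_{\mathbb{A}}$ --- this is where invertibility of $\theta$ enters --- with exactly the same Lyapunov spectrum as $\mathcal{A}$, since singular values are transpose-invariant and therefore $\|\bigwedge^{\ell}\mathcal{A}^{*}(n,\sigma)\|=\|\bigwedge^{\ell}\mathcal{A}(n,\theta^{-n}\sigma)\|$ for every $\ell$. Applying the one-sided multiplicative ergodic theorem to $\mathcal{A}^{*}$ over $(\theta^{-1},\mu)$ yields a Borel decreasing filtration $\{0\}=V^{*}_{\kappa+1}(\sigma)\subset\dots\subset V^{*}_1(\sigma)=\mathbb{R}^d$, and I would then set
\[
H_j(\sigma):=\bigl(V^{*}_{j+1}(\sigma)\bigr)^{\perp}.
\]
These are Borel, increase with $j$, satisfy $\dim H_j(\sigma)=\mathfrak{m}_1+\dots+\mathfrak{m}_j$, and, using the identity $\mathcal{A}(n,\sigma)^{\mathsf{T}}=\mathcal{A}^{*}(n,\theta^{n}\sigma)$, are forward $\mathcal{A}$-equivariant: $\mathcal{A}(1,\sigma)H_j(\sigma)\subseteq H_j(\theta\sigma)$.

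With the $H_j$ in hand I would put $\mathbb{E}_j(\sigma):=H_j(\sigma)\cap V_j(\sigma)$. The workhorse is the transversality assertion $H_j(\sigma)\cap V_{j+1}(\sigma)=\{0\}$ for $\mu$-a.e.\ $\sigma$ and every $j$: since $\dim H_j+\dim V_{j+1}=d$ this upgrades to $\mathbb{R}^d=H_j(\sigma)\oplus V_{j+1}(\sigma)$, a dimension count then gives $\dim\mathbb{E}_j(\sigma)=\mathfrak{m}_j$, and intersecting the successive splittings shows $\mathbb{R}^d=\bigoplus_j\mathbb{E}_j(\sigma)$. Equivariance $\mathcal{A}(n,\sigma)\mathbb{E}_j(\sigma)\subseteq\mathbb{E}_j(\theta^n\sigma)$ is inherited from that of $H_j$ and $V_j$; the exact growth rate $\lambda_j(\mu)$ on $\mathbb{E}_j(\sigma)\setminus\{0\}$ follows by sandwiching ($\le\lambda_j(\mu)$ because $\mathbb{E}_j\subseteq V_j$, and $\ge\lambda_j(\mu)$ because $\mathbb{E}_j\subseteq H_j$ and $H_j(\sigma)\cap V_{j+1}(\sigma)=\{0\}$). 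Note~(1) is then immediate, since $\mathbb{E}_r(\sigma)\subseteq V_r(\sigma)\subseteq V_j(\sigma)$ for $r\ge j$ and $\sum_{r\ge j}\mathfrak{m}_r=\dim V_j$; and Note~(2), the uniform convergence on the unit sphere of $\mathbb{E}_j(\sigma)$, comes from a tempered-distortion estimate --- the angles between the subspaces $\mathbb{E}_i(\sigma)$ and the condition number of $\mathcal{A}(n,\sigma)$ restricted to $\mathbb{E}_j(\sigma)$ grow subexponentially along $\mu$-a.e.\ orbit --- obtained by applying Birkhoff's ergodic theorem to the logarithm of the angle function, the familiar $L^{1}$ reduction from Pesin theory.

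The main obstacle is exactly the transversality lemma $H_j(\sigma)\cap V_{j+1}(\sigma)=\{0\}$ together with the forward equivariance, because the two-sided Oseledets theorem is unavailable. One must deduce it from the duality $\langle\mathcal{A}(n,\sigma)u,v\rangle=\langle u,\mathcal{A}^{*}(n,\theta^{n}\sigma)v\rangle$ and the fine structure of singular values, showing that on a $\theta$-invariant conull set the $(\mathfrak{m}_1+\dots+\mathfrak{m}_j)$-th and the $(\mathfrak{m}_1+\dots+\mathfrak{m}_j+1)$-th singular values of the long products $\mathcal{A}(n,\theta^{-n}\sigma)$ are separated by a factor $e^{n(\lambda_j(\mu)-\lambda_{j+1}(\mu))}$ up to tempered errors --- this last step being where Kingman's subadditive ergodic theorem applied to the exterior powers $\bigwedge^{\ell}\mathcal{A}$ is used. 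Rank drops of $\mathcal{A}(n,\cdot)$, which can occur only when $\lambda_\kappa(\mu)=-\infty$, must be quarantined into the bottom block, where one simply takes $\mathbb{E}_\kappa(\sigma):=V_\kappa(\sigma)$ --- already supplied and equivariant --- and checks that it does not interfere with the finite-exponent spaces $\mathbb{E}_1,\dots,\mathbb{E}_{\kappa-1}$. Keeping all the tempered constants under uniform control, so that Notes~(1)--(2) genuinely hold, is the routine but delicate remainder.
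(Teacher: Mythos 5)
The paper does not prove Lemma~\ref{lem2.3R}; it is imported verbatim (in a finite-dimensional special case) from Froyland--Lloyd--Quas \cite{FLQ}, and the text around it simply records the two ``Notes'' that will be used later. So there is no in-paper argument to compare yours against; what you have supplied is a reconstruction of the cited semi-invertible multiplicative ergodic theorem, and it should be judged on its own.

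As a sketch your reconstruction is sound and, in finite dimensions, is one of the standard routes. The adjoint cocycle $\mathcal{A}^{*}(n,\sigma)=\mathcal{A}(n,\theta^{-n}\sigma)^{\mathsf T}$ is indeed a cocycle over $\theta^{-1}$ (check: $\mathcal{A}^{*}(m+n,\sigma)=\mathcal{A}^{*}(m,\theta^{-n}\sigma)\mathcal{A}^{*}(n,\sigma)$), with the same Lyapunov spectrum because singular values are transpose-invariant; and your computation that $\mathcal{A}(n,\sigma)^{\mathsf T}V^{*}_{j+1}(\theta^n\sigma)\subseteq V^{*}_{j+1}(\sigma)$ dualizes correctly to the forward equivariance $\mathcal{A}(n,\sigma)H_j(\sigma)\subseteq H_j(\theta^n\sigma)$. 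The dimension bookkeeping, the identity $V_j=\mathbb{E}_j\oplus V_{j+1}$ feeding Note~(1), and the sandwiching for the exact exponent on $\mathbb{E}_j\setminus\{0\}$ (which really just uses $\mathbb{E}_j\subseteq V_j\setminus V_{j+1}$ from the one-sided theorem once transversality is known) are all fine, and you correctly flag the transversality lemma $H_j(\sigma)\cap V_{j+1}(\sigma)=\{0\}$ together with the tempered-distortion control needed for Note~(2) as the genuine substance. Two small remarks. First, once you set $H_\kappa=(V^{*}_{\kappa+1})^{\perp}=\mathbb{R}^d$, the identity $\mathbb{E}_\kappa=H_\kappa\cap V_\kappa=V_\kappa$ is automatic, so the ``quarantine of the bottom block'' when $\lambda_\kappa=-\infty$ is already built into your definition rather than an extra step. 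Second, FLQ's own argument is cast for Perron--Frobenius operator cocycles on Banach spaces, where the adjoint-cocycle device does not directly apply; they instead realize each $\mathbb{E}_j(\sigma)$ as a Grassmannian limit of fast singular subspaces of the backward products $\mathcal{A}(n,\theta^{-n}\sigma)$. In $\mathbb{R}^{d\times d}$ the two constructions produce the same spaces, so your route is legitimate and arguably more transparent here, but it is not literally ``how the cited proof goes.''
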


Then, based on this fined multiplicative ergodic theorem, one can
easily obtain the following.

\begin{lemma}\label{lem2.4R}
For any $\mu\in\mathcal{M}_{\textit{erg}}(\theta)$,
one can find an invariant Borel subset
$\varLambda_{\mu}\subset\varSigma_{\!\mathbb{A}}$ of $\mu$-measure $1$,
such that to any $\sigma\in\varLambda_{\mu}$ there
corresponds a sequence of positive integers
$n_k\nearrow\infty$ so that
\begin{enumerate}
\item[$\mathrm{(1)}$] $\sigma(\cdot+n_k)\to \sigma$ as
$k\to\infty$;

\item[$\mathrm{(2)}$]
$\mathbb{E}_1(\sigma(\cdot+n_k))\to\mathbb{E}_1(\sigma)$
and $V_2(\sigma(\cdot+n_k))\to
V_2(\sigma)$ as $k\to\infty$.
\end{enumerate}
\end{lemma}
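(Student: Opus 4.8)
The plan is to extract, from the finer multiplicative ergodic theorem (Lemma~\ref{lem2.3R}), the three ingredients that need simultaneous recurrence: the orbit itself $\sigma(\cdot+n)$, the top Oseledets subspace $\mathbb{E}_1(\sigma(\cdot+n))$, and the complementary subspace $V_2(\sigma(\cdot+n))$. The space $\varSigma_{\mathbb{A}}$ is compact, and the Grassmannians $\mathrm{Gr}(\mathfrak{m}_1(\mu),d)$ and $\mathrm{Gr}(d-\mathfrak{m}_1(\mu),d)$ carrying $\mathbb{E}_1$ and $V_2$ are also compact metric spaces. So the natural approach is to build the product dynamical system on $\varSigma_{\mathbb{A}}\times\mathrm{Gr}(\mathfrak{m}_1,d)\times\mathrm{Gr}(d-\mathfrak{m}_1,d)$ via the graph map
\begin{equation*}
\Phi\colon\sigma\mapsto\bigl(\sigma,\mathbb{E}_1(\sigma),V_2(\sigma)\bigr)
\end{equation*}
defined on the full-measure invariant set $\varGamma$ of Lemma~\ref{lem2.3R}, push $\mu$ forward to $\hat\mu=\Phi_*\mu$, and then invoke Poincar\'e recurrence on $(\varSigma_{\mathbb{A}}\times\mathrm{Gr}\times\mathrm{Gr},\hat\theta,\hat\mu)$, where $\hat\theta$ is the skew-product lift $\hat\theta(\sigma,E,V)=(\theta\sigma,\mathbb{E}_1(\theta\sigma),V_2(\theta\sigma))$ — or, more carefully, the map induced by the cocycle action of $\mathcal{A}(1,\cdot)$ on subspaces, which agrees with $\mathbb{E}_1,V_2$ along $\varGamma$ by the invariance statement in the lemma.

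Concretely I would carry out the steps as follows. First, restrict to $\varGamma$ and note $\Phi$ is a Borel injection with Borel inverse onto its image (since the first coordinate already recovers $\sigma$), so $\hat\mu$ is a well-defined $\hat\theta$-invariant ergodic Borel probability measure whose support is a subset of the closure $\overline{\Phi(\varGamma)}$. Second, apply the Poincar\'e recurrence theorem to $\hat\theta$: for $\hat\mu$-a.e. point $(\sigma,\mathbb{E}_1(\sigma),V_2(\sigma))$ there is a sequence $n_k\nearrow\infty$ with $\hat\theta^{n_k}(\sigma,\ldots)\to(\sigma,\ldots)$ in the product metric. Third, read off the coordinates: convergence in the first coordinate gives $\sigma(\cdot+n_k)\to\sigma$, which is part~(1); convergence in the second and third coordinates gives $\mathbb{E}_1(\sigma(\cdot+n_k))\to\mathbb{E}_1(\sigma)$ and $V_2(\sigma(\cdot+n_k))\to V_2(\sigma)$ in the respective Grassmannians, which is part~(2). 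Fourth, set $\varLambda_\mu$ to be the (automatically $\theta$-invariant, full-measure) set of $\sigma\in\varGamma$ whose image point is Poincar\'e recurrent; intersecting with a further $\theta$-invariant full-measure set if needed to guarantee the $n_k$ can be taken strictly increasing causes no trouble.

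The one genuinely delicate point — and the step I expect to be the main obstacle — is the measurability and the behavior of the maps $\sigma\mapsto\mathbb{E}_1(\sigma)$ and $\sigma\mapsto V_2(\sigma)$ at the level needed to run Poincar\'e recurrence cleanly. Lemma~\ref{lem2.3R} only guarantees these are Borel measurable on $\varGamma$, not continuous, so the lifted map $\hat\theta$ need not be continuous on all of $\overline{\Phi(\varGamma)}$; what one actually uses is that $\hat\theta$ is a Borel automorphism of the standard Borel space $\Phi(\varGamma)$ preserving $\hat\mu$, and Poincar\'e recurrence holds in that purely measure-theoretic setting (no topology on the fibers is required for recurrence itself — the topology only enters in \emph{stating} what "$\to$" means, and the Grassmannians are compact metrizable, so the Borel structure is standard). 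I would therefore phrase the argument measure-theoretically: view $(\Phi(\varGamma),\hat\theta,\hat\mu)$ as an abstract measure-preserving system on a standard Borel space, apply recurrence, and only at the very end translate "recurrent" into the metric statements (1)--(2) using that each coordinate space is a compact metric space. A secondary, purely bookkeeping point is to confirm that the subspace-action of the cocycle $\mathcal{A}(n,\cdot)$ sends $\mathbb{E}_1(\sigma)$ to $\mathbb{E}_1(\theta^n\sigma)$ and $V_2(\sigma)$ to $V_2(\theta^n\sigma)$ — this is exactly the invariance clause $\mathcal{A}(n,\sigma)\mathbb{E}_j(\sigma)\subseteq\mathbb{E}_j(\sigma(\cdot+n))$ together with $V_2=\bigoplus_{r\ge2}\mathbb{E}_r$ from Note~(1) of Lemma~\ref{lem2.3R}, plus injectivity of $\mathcal{A}(n,\sigma)$ restricted to these subspaces, which follows from the Lyapunov exponents being finite there — so that $\Phi$ genuinely intertwines $\theta$ with $\hat\theta$ and $\hat\mu$ is honestly invariant.
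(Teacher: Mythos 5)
Your proof is correct, and it takes a route slightly different from the paper's. The paper's own proof is a one-sentence sketch citing Poincar\'e recurrence together with Lusin's theorem: one first applies Lusin to the Borel maps $\sigma\mapsto\mathbb{E}_1(\sigma)$ and $\sigma\mapsto V_2(\sigma)$ on $\varGamma$ to get, for each $m$, a closed set $K_m$ with $\mu(K_m)>1-1/m$ on which both maps restrict to continuous functions, then applies Poincar\'e recurrence to $(\theta,\mu)$ to make $\mu$-a.e.\ $\sigma\in K_m$ return to $K_m$ arbitrarily close to itself, whence continuity on $K_m$ gives part~(2), and one intersects over $m$. You instead lift to the graph of $\Phi$ inside the compact product $\varSigma_{\mathbb{A}}\times\mathrm{Gr}\times\mathrm{Gr}$ and run Poincar\'e recurrence once in the lifted system; the countable base of the compact product metric space then upgrades the measure-theoretic return statement to the metric recurrence required in (1)--(2). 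This avoids Lusin entirely and is, if anything, cleaner; both routes are standard devices for extracting metric recurrence from merely Borel data, and both bottom out in Poincar\'e recurrence.

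One detail in your write-up should be dropped: the ``secondary, purely bookkeeping point'' about the cocycle action of $\mathcal{A}(n,\cdot)$ on subspaces is both unnecessary and not quite safe here. The paper explicitly allows singular $A_k$ (the remark after Theorem~\ref{thm2.1R} makes a point of not assuming $\A$ nonsingular), and (\ref{eq2.5R}) permits $\lambda_\kappa(\mu)=-\infty$; so $\mathcal{A}(n,\sigma)$ need not be injective on $V_2(\sigma)$, and $\mathcal{A}(n,\sigma)V_2(\sigma)$ can be a proper subspace of $V_2(\theta^n\sigma)$, which would break the claimed action on $\mathrm{Gr}(d-\mathfrak{m}_1(\mu),d)$. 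Fortunately this is moot for your argument: if you simply take $\hat\theta:=\Phi\circ\theta\circ\mathrm{pr}_1$ on $\Phi(\varGamma)$ (the graph conjugate, which is your first definition), then $\hat\theta_*\hat\mu=\hat\mu$ follows directly from $\theta_*\mu=\mu$ by functoriality of pushforward, with no appeal to the cocycle's action on Grassmannians and no injectivity hypothesis. So delete the parenthetical ``or, more carefully, the map induced by the cocycle action of $\mathcal{A}(1,\cdot)$'' and the injectivity discussion at the end.
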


\begin{proof}
Since the splitting
$\mathbb{R}^d=\mathbb{E}_1(\sigma)\oplus\cdots\oplus\mathbb{E}_\kappa(\sigma)$
and the filtration $V_\kappa(\sigma)\subset\cdots\subset
V_1(\sigma)$ over $\varGamma$ given by Lemma~\ref{lem2.3R} both are measurable
with respect to $\sigma\in\varGamma$, the statement
follows easily from the Poincar\'{e} recurrence theorem and the
Lusin theorem.
\end{proof}

This set $\varLambda_\mu$ is similar to the Pesin set in the
smooth ergodic theory; see, for example, \cite[Lemma~2.4]{Dai-FM}.

%%%%%%%%%%%%%%%%%%%%%%%%%%%%%%%%%%%%%%%%%%%%%%%%%%%%%%%%%%%%%%%%%%%%%%%%%%%
\subsubsection{Closing by periodic switching laws property}\label{sec2.1.2}%
Let $\varLambda_\mu$ be defined as in the Lemma~\ref{lem2.4R}.
For any switching law $\sigma\in\varLambda_\mu$,
Lemma~\ref{lem2.4R}-(1) above indicates that
$\sigma$ is a Poincar\'{e} recurrent point of the subshift dynamical
system
$\theta\colon\varSigma_{\mathbb{A}}\rightarrow\varSigma_{\mathbb{A}}$. Then it can be closed up by periodic switching laws from the following classical result.

\begin{lemma}\label{lem2.5R}
For any $\varepsilon>0$ with $\varepsilon<1/\mathcal{K}$, there exists a constant
$\delta=\delta(\varepsilon)>0$ and an integer $N=N(\varepsilon)\ge1$
such that if a switching law
$\sigma\in\varSigma_{\mathbb{A}}$
satisfies
$d(\sigma,\sigma(\cdot+\pi)<\delta$
for some $\pi\ge N$, then the periodic switching law of period
$\pi$ with generator $(\sigma(0),\dotsc,\sigma(\pi-1))$
\begin{equation*}
\eta=(\dotsc,\uwave{\sigma(0),\dotsc,\sigma(\pi-1)},\uwave{\overset{*}{\sigma}(0),\dotsc,\sigma(\pi-1)},\uwave{\sigma(0),\dotsc,\sigma(\pi-1)},\dotsc)
\end{equation*}
belongs to $\varSigma_{\!\mathbb{A}}$ such that
\begin{equation*}d(\sigma(\cdot+n),\eta(\cdot+n))\le\varepsilon\end{equation*}
for all $0\le n\le\pi-1$.
\end{lemma}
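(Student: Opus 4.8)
The plan is to exploit the purely combinatorial structure of the subshift of finite type: the metric~(\ref{eq2.1R}) is such that two sequences are close precisely when they agree on a long central block, and a periodic sequence built by repeating an admissible block stays $\varepsilon$-close to the original on the relevant window. First I would choose, given $\varepsilon<1/\mathcal{K}$, an integer $m=m(\varepsilon)\ge1$ large enough that $\sum_{|n|>m}K^{-|n|}|\sigma(n)-\sigma'(n)|\le\varepsilon$ for any two sequences; since $|\sigma(n)-\sigma'(n)|\le K-1$, it suffices to take $m$ with $2(K-1)\sum_{n>m}K^{-n}\le\varepsilon$, which is possible because the geometric tail goes to $0$. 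Then set $N=N(\varepsilon)=2m+1$ and let $\delta=\delta(\varepsilon)=K^{-(2m+1)}$ (any $\delta$ forcing agreement on $[-(2m+1),2m+1]$ works by an estimate like~(\ref{eq2.2R})).

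Next, suppose $\sigma\in\varSigma_{\mathbb{A}}$ satisfies $d(\sigma,\sigma(\cdot+\pi))<\delta$ for some $\pi\ge N$. By the choice of $\delta$ this gives $\sigma(n)=\sigma(n+\pi)$ for all $-(2m+1)\le n\le 2m+1$; in particular $\sigma(0)=\sigma(\pi)$ and, reading the $\mathbb{A}$-constraint one step before the wrap-around, $\mathbb{A}_{\sigma(\pi-1)\sigma(\pi)}=\mathbb{A}_{\sigma(\pi-1)\sigma(0)}=1$. Hence the block $(\sigma(0),\dotsc,\sigma(\pi-1))$ is an admissible cyclic word, i.e.\ lies in $W_{\mathrm{per}}^\pi(\mathbb{A})$, and the bi-infinite periodic sequence $\eta$ obtained by repeating it is a genuine element of $\varSigma_{\mathbb{A}}$ of period $\pi$. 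I would remark that $\pi\ge N=2m+1$ is exactly what guarantees that within one period the central window $[-m,m]$ used to compute the distance sees only one copy of the block, so no two distinct translates of the block interfere.

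Finally I would verify the shadowing estimate $d(\sigma(\cdot+n),\eta(\cdot+n))\le\varepsilon$ for $0\le n\le\pi-1$. Fix such an $n$ and compare $\sigma(\cdot+n)$ with $\eta(\cdot+n)$ coordinate by coordinate: for an index $j$ with $|j|\le m$, the entry $\eta(n+j)$ equals $\sigma(n'+j)$ where $n'\in\{n,n-\pi\}$ is a representative with $n+j$ reduced modulo $\pi$; because $0\le n\le\pi-1$ and $|j|\le m\le (\pi-1)/2$, the index $n+j$ (or $n+j+\pi$, resp.\ $n+j-\pi$) lands in $[-(2m+1),2m+1]$, a range on which $\sigma$ and $\sigma(\cdot+\pi)$ agree; iterating this agreement shows $\eta(n+j)=\sigma(n+j)$ for $|j|\le m$. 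Therefore the two sequences $\sigma(\cdot+n)$ and $\eta(\cdot+n)$ coincide on $[-m,m]$, and by the choice of $m$ their distance is bounded by the tail $\sum_{|j|>m}K^{-|j|}|\sigma(n+j)-\eta(n+j)|\le\varepsilon$. This completes the argument.

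\textbf{Main obstacle.} Nothing here is deep; the only point requiring care is the bookkeeping in the last paragraph — keeping track of which representative $n'$ of $n$ one uses when reducing $n+j$ modulo $\pi$, and checking that the resulting index stays inside the window $[-(2m+1),2m+1]$ on which the hypothesis $\sigma=\sigma(\cdot+\pi)$ gives agreement. The inequality $\pi\ge N=2m+1$ is precisely what makes this bookkeeping go through, so the role of the constant $N$ should be emphasized. (If one wanted the cleaner statement with strict agreement on $[-m,m]$ for \emph{all} $n$, one could instead take $N$ slightly larger, e.g.\ $N=4m$, trading a marginally worse constant for a more transparent estimate.)
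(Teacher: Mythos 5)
Your argument is correct and is precisely the elementary combinatorial check that the paper declares to ``follow easily from (\ref{eq2.2R})'' and omits: choose $m$ so the metric tail beyond $[-m,m]$ is below $\varepsilon$, choose $\delta$ to force $\sigma(n)=\sigma(n+\pi)$ on $[-(2m+1),2m+1]$, note the wrap-around admissibility $\mathbb{A}_{\sigma(\pi-1)\sigma(0)}=1$, and verify via a one-step reduction mod $\pi$ (which $\pi\ge N=2m+1$ guarantees) that $\eta(\cdot+n)$ and $\sigma(\cdot+n)$ agree on $[-m,m]$. The only blemish is the throwaway line ``$n'\in\{n,n-\pi\}$'', which should also allow $n+\pi$ for negative $n+j$; your parenthetical ``$n+j+\pi$, resp. $n+j-\pi$'' immediately afterwards shows you handled both directions, so the proof stands.
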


Here $\mathcal{K}$ is as in (\ref{eq2.2R}). This statement follows easily from (\ref{eq2.2R}) and so we omit its proof here.
%%%%%%%%%%%%%%%%%%%%%%%%%%%%%%%%%%%%%%%%%%%%%%%%%%%%%%%%%%%%%%%%%%%%%%%%%%%
\subsubsection{Invariant cones}\label{sec2.1.3}
Let $\varLambda_\mu$ be defined as in Lemma~\ref{lem2.4R}. For
any $\sigma\in\varLambda_\mu$, let
$$
P_\sigma\colon
\mathbb{R}^d\rightarrow\mathbb{E}_1(\sigma) \quad
\textrm{and}\quad Q_{\sigma}\colon \mathbb{R}^d\rightarrow
V_2(\sigma)
$$
be the natural projections satisfying
$Q_{\sigma}=\mathrm{Id}_{\mathbb{R}^d}-P_{\sigma}$. Since
$\mathbb{R}^d=\mathbb{E}_1(\sigma)\oplus V_2(\sigma)$,
both $P_{\sigma}$ and $Q_{\sigma}$ are well defined.

For any $\delta>0$ and any $\sigma\in\varLambda_\mu$,
we define a closed convex cone in $\mathbb{R}^d$ in the following
standard way:
\begin{equation}
\bK(\sigma,\delta)=\left\{x\in\mathbb{R}^d\colon
\|P_{\sigma}(x)\|\ge\delta^{-1}\|Q_{\sigma}(x)\|\right\}.
\end{equation}
Then, $\bK(\sigma,\delta)\to\mathbb{E}_1(\sigma)$ as
$\delta\to0$ in the sense of Grassmannian metric.

Borrowing the idea of the proof of \cite[Theorem~1.5]{Mor12}, we now estimate the ``distortion" of the cones
$\bK(\sigma,\delta)$ by the action of $\mathcal{A}(n,\sigma)$.

\begin{lemma}\label{lem2.6R}
Let $\epsilon>0$ be sufficiently small such that
$3\epsilon<\lambda_1(\mu)-\lambda_2(\mu)$ and
$\sigma\in\varLambda_\mu$ with an associated integer
sequence $n_k\nearrow\infty$ as in Lemma~\ref{lem2.4R}. Then,
there holds that
\begin{equation*}
\inf\limits_{x\in\bK(\sigma,1), \|x\|=1}\|\mathcal{A}(n_k,\sigma)x\|\ge\exp\left(n_k(\lambda_1(\mu)-3\epsilon)\right)
\end{equation*}
and
\begin{equation*}
\mathcal{A}(n_k, \sigma)\bK(\sigma,1)\subseteq
\bK(\sigma,1)
\end{equation*}
for all $k\ge1$ sufficiently large.
\end{lemma}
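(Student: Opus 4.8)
\textbf{Proof proposal for Lemma~\ref{lem2.6R}.}

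The plan is to exploit the finer multiplicative ergodic theorem (Lemma~\ref{lem2.3R}), in particular the \emph{uniform} exponential growth on the unit sphere of $\mathbb{E}_1(\sigma)$ together with the upper control on $V_2(\sigma)$, and then to transfer these estimates from $\sigma$ to the nearby base points $\sigma(\bcdot+n_k)$ using the continuity statements in Lemma~\ref{lem2.4R}-(2). Write $P_\sigma, Q_\sigma$ for the projections onto $\mathbb{E}_1(\sigma)$ and $V_2(\sigma)$ as in Section~\ref{sec2.1.3}. For $x\in\bK(\sigma,1)$ with $\|x\|=1$, decompose $x=P_\sigma(x)+Q_\sigma(x)$; the cone condition forces $\|P_\sigma(x)\|\ge\|Q_\sigma(x)\|$, hence $\|P_\sigma(x)\|$ is bounded below by a fixed positive constant $c_0$ (depending only on the angle between $\mathbb{E}_1(\sigma)$ and $V_2(\sigma)$, which is fixed once $\sigma$ is fixed). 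Now $\mathcal{A}(n,\sigma)x=\mathcal{A}(n,\sigma)P_\sigma(x)+\mathcal{A}(n,\sigma)Q_\sigma(x)$, and since the splitting is $\mathcal{A}$-equivariant, $\mathcal{A}(n,\sigma)P_\sigma(x)\in\mathbb{E}_1(\sigma(\bcdot+n))$ while $\mathcal{A}(n,\sigma)Q_\sigma(x)\in V_2(\sigma(\bcdot+n))$.

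The key estimates are then: (i) by the uniform convergence on the unit sphere of $\mathbb{E}_1(\sigma)$ (Lemma~\ref{lem2.3R}, Note~(2)), for all $n$ large $\|\mathcal{A}(n,\sigma)P_\sigma(x)\|\ge \|P_\sigma(x)\|\exp(n(\lambda_1(\mu)-\epsilon))\ge c_0\exp(n(\lambda_1(\mu)-\epsilon))$; (ii) by the definition of $V_2$ (or by applying the uniform control to each $\mathbb{E}_j$, $j\ge2$), $\|\mathcal{A}(n,\sigma)Q_\sigma(x)\|\le C\|Q_\sigma(x)\|\exp(n(\lambda_2(\mu)+\epsilon))\le C\exp(n(\lambda_2(\mu)+\epsilon))$ for all $n$ large, with $C$ depending only on $\sigma$. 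Since $3\epsilon<\lambda_1(\mu)-\lambda_2(\mu)$, the $\mathbb{E}_1$-component dominates: for $n=n_k$ large the norm of the image is at least $\tfrac12 c_0\exp(n_k(\lambda_1(\mu)-\epsilon))\ge\exp(n_k(\lambda_1(\mu)-3\epsilon))$, which absorbs the constant $\tfrac12 c_0$. This gives the first displayed inequality. For the invariance $\mathcal{A}(n_k,\sigma)\bK(\sigma,1)\subseteq\bK(\sigma,1)$: the image of $x$ lies in the cone $\bK(\sigma(\bcdot+n_k),\,\delta_k)$ where, from (i)–(ii), $\delta_k=\|Q_{\sigma(\bcdot+n_k)}(\mathcal{A}(n_k,\sigma)x)\|/\|P_{\sigma(\bcdot+n_k)}(\mathcal{A}(n_k,\sigma)x)\| \le (C/c_0)\exp(n_k(\lambda_2(\mu)-\lambda_1(\mu)+2\epsilon))\to 0$ as $k\to\infty$. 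Thus $\mathcal{A}(n_k,\sigma)\bK(\sigma,1)$ is contained in an arbitrarily thin cone around $\mathbb{E}_1(\sigma(\bcdot+n_k))$; since by Lemma~\ref{lem2.4R}-(2) $\mathbb{E}_1(\sigma(\bcdot+n_k))\to\mathbb{E}_1(\sigma)$ and $V_2(\sigma(\bcdot+n_k))\to V_2(\sigma)$ in the Grassmannian metric, for all large $k$ this thin cone is contained in $\bK(\sigma,1)$.

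The main obstacle I expect is the \emph{uniformity} bookkeeping needed to combine the two limits — the exponential-growth estimates (which hold for $n$ large, uniformly on spheres, by Lemma~\ref{lem2.3R}) and the Grassmannian convergence $\mathbb{E}_1(\sigma(\bcdot+n_k))\to\mathbb{E}_1(\sigma)$, $V_2(\sigma(\bcdot+n_k))\to V_2(\sigma)$ (which hold along the subsequence $n_k$, by Lemma~\ref{lem2.4R}). One must pin down that the constants $c_0, C$ and the threshold ``$n$ large'' depend only on $\sigma$ (not on $x$ or $k$), so that a single choice of $k$ large makes both the domination $\tfrac12 c_0\exp(n_k(\lambda_1-\epsilon))\ge\exp(n_k(\lambda_1-3\epsilon))$ and the cone-inclusion $\bK(\sigma(\bcdot+n_k),\delta_k)\subseteq\bK(\sigma,1)$ hold simultaneously. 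A secondary subtlety: bounding $\|P_{\sigma(\bcdot+n_k)}\|$ and $\|Q_{\sigma(\bcdot+n_k)}\|$ uniformly in $k$, which again follows from the Grassmannian convergence of the splitting, since the angle between $\mathbb{E}_1(\sigma(\bcdot+n_k))$ and $V_2(\sigma(\bcdot+n_k))$ stays bounded away from zero for $k$ large. Once these uniformities are recorded, the proof is the elementary two-line estimate sketched above.
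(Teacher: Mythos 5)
Your proof is correct and follows essentially the same route as the paper: decompose $x$ into $P_\sigma(x)+Q_\sigma(x)$, use the equivariance of the Oseledets splitting with the minimum-norm growth on $\mathbb{E}_1(\sigma)$ and the upper bound on $V_2(\sigma)$ from Lemma~\ref{lem2.3R}, note that $\|P_\sigma(x)\|\ge\tfrac12\|x\|$ on $\bK(\sigma,1)$, and then invoke Lemma~\ref{lem2.4R}-(2) to pass from the thin cone around $\mathbb{E}_1(\sigma(\bcdot+n_k))$ back into $\bK(\sigma,1)$. The only small inaccuracy is the remark that the lower bound $c_0$ for $\|P_\sigma(x)\|$ depends on the angle between $\mathbb{E}_1(\sigma)$ and $V_2(\sigma)$ — in fact the triangle inequality combined with the cone condition gives $\|P_\sigma(x)\|\ge\tfrac12$ for unit $x\in\bK(\sigma,1)$, independently of the angle, which is exactly the estimate the paper uses.
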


\begin{proof}
From Lemma~\ref{lem2.3R}, there follows that
\begin{equation*}
\lim_{n\to\infty}\frac{1}{n}\log\pmb{\|}\mathcal{A}(n,\sigma)|\mathbb{E}_1(\sigma)\pmb{\|}_{\min}=\lambda_1(\mu)
\quad\textrm{and}\quad
\lim_{n\to\infty}\frac{1}{n}\log\pmb{\|}\mathcal{A}(n,\sigma)|V_2(\sigma)\pmb{\|}=\lambda_2(\mu),
\end{equation*}
where $\pmb{\|}\cdot\pmb{\|}_{\min}$ denotes the minimum norm of a matrix or linear operator, which is given by $\pmb{\|}A\pmb{\|}_{\min}=\min_{x\in\mathbb{R}^{d}, \|x\|=1}\|Ax\|$ for any $A\in\mathbb{R}^{d\times d}$.

We now choose arbitrarily $\epsilon>0$ so small that
$3\epsilon<\lambda_1(\mu)-\lambda_2(\mu)$. If $n$ is large sufficiently, we
could obtain that for each $x\in \bK(\sigma,1)$,
\begin{equation}\label{eq2.9R}
\begin{split}
\|P_{\sigma(\cdot+n)}(\mathcal{A}(n,\sigma)x)\|&=\|\mathcal{A}(n,\sigma)P_{\sigma}(x)\|
\ge e^{n(\lambda_1(\mu)-\epsilon)}\|P_{\sigma}(x)\|\\
&\ge\frac{1}{2}e^{n(\lambda_1(\mu)-\epsilon)}\|x\|\\
&\ge e^{n(\lambda_1(\mu)-2\epsilon)}\|x\|
\end{split}
\end{equation}
and
\begin{equation}\label{eq2.10R}
\begin{split}
\|Q_{\sigma(\cdot+n)}(\mathcal{A}(n,\sigma)x)\|&=\|\mathcal{A}(n,\sigma)Q_{\sigma}(x)\|\\
&\le e^{n(\lambda_2(\mu)+\epsilon)}\|Q_{\mu}(x)\|\\
&\le e^{n(\lambda_2(\mu)+\epsilon)}\pmb{\|}Q_{\mu}\pmb{\|}\cdot\|x\|,
\end{split}
\end{equation}
where we have used the inequality
$$
\|x\|=\|P_{\sigma}(x)+Q_{\sigma}(x)\|\le
2\|P_{\sigma}(x)\|\quad\forall x\in \bK(\sigma,1).
$$
So, combining (\ref{eq2.9R}) and (\ref{eq2.10R}) leads to
$$
\|Q_{\sigma(\cdot+n)}(\mathcal{A}(n,\sigma)x)\|\le
e^{n(\lambda_2(\mu)+3\epsilon-\lambda_1(\mu))}\pmb{\|}Q_{\sigma}\pmb{\|}\cdot\|P_{\sigma(\cdot+n)}(\mathcal{A}(n,\sigma)x)\|
$$
for any $x\in \bK(\sigma,1)$, from which we could obtain that
for any $\delta>0$,
\begin{equation}\label{eq2.11R}
\mathcal{A}(n,\sigma)\bK(\sigma,1)\subseteq
\bK(\sigma(\cdot+n),\delta)
\end{equation}
when $n$ is large sufficiently. Moreover, from (\ref{eq2.9R}) and (\ref{eq2.10R}), we could obtain that
for any vector $x\in\bK(\sigma,1)$,
\begin{equation}\label{eq2.12R}
\begin{split}
\|\mathcal{A}(n,\sigma)x\|&\ge\|P_{\sigma(\cdot+n)}(\mathcal{A}(n,\sigma)x)\|-\|Q_{\sigma(\cdot+n)}(\mathcal{A}(n,\sigma)x)\|\\
&\ge\left(e^{n(\lambda_1(\mu)-2\epsilon)}-e^{n(\lambda_2(\mu)+\epsilon)}\pmb{\|}Q_{\sigma}\pmb{\|}\right)\|x\|\\
&\ge e^{n(\lambda_1(\mu)-2\epsilon)}\left(1-e^{n(\lambda_2(\mu)+3\epsilon-\lambda_1(\mu))}\pmb{\|}Q_{\sigma}\pmb{\|}\right)\|x\|
\end{split}
\end{equation}
which gives the first part of the statement.

To complete the proof of Lemma~\ref{lem2.6R}, by (\ref{eq2.9R}) together with
(\ref{eq2.10R}) we need to use only the property (2) of
Lemma~\ref{lem2.4R}.

Therefore, the proof of Lemma~\ref{lem2.6R} is completed.
\end{proof}
%%%%%%%%%%%%%%%%%%%%%%%%%%%%%%%%%%%%%%%%%%%%%%%%%%
\subsubsection{Proof of Theorem~\ref{thm2.1R}}%%
Now we can prove Theorem~\ref{thm2.1R} using Lemmas~\ref{lem2.4R},
\ref{lem2.5R} and \ref{lem2.6R}.

\begin{proof}[Proof of Theorem~\ref{thm2.1R}]
Let $\mu\in\mathcal
{M}_{\textit{erg}}(\theta)$
be arbitrarily given. Without loss of generality, we assume
$\mu$ is non-atomic; otherwise, the statement holds
trivially.

The case that $\kappa=1$, i.e. the cocycle $\mathcal{A}$ has only one Lyapunov
exponent at $\mu$, is trivial from \cite{Dai09}. Now, assume $\kappa\ge2$ in Lemma~\ref{lem2.3R}.

Let $\epsilon>0$ be sufficiently small. Take arbitrarily
$\sigma=(\sigma(n))_{n\in\mathbb{Z}}\in\varLambda_\mu$ with an
associated increasing integer sequence $n_k\to\infty$ as in
Lemma~\ref{lem2.4R}. Write
\begin{gather*}
\omega_k=(\sigma(0),\dotsc,\sigma(n_k-1))\in\{1,\dotsc,K\}^{n_k}\\
\intertext{and}
\sigma_k=(\dotsc,\omega_k,\omega_k,\dotsc)\in\{1,\dotsc,K\}^\mathbb{Z}\quad\textrm{ with }\sigma_k(0)=\sigma(0).
\end{gather*}
From Lemmas~\ref{lem2.4R}-(1) and \ref{lem2.5R}, it
follows that $\omega_k\in
W_{\mathrm{per}}^{n_k}(\mathbb{A})$ and so
$\sigma_k\in\varSigma_{\mathbb{A}}$ as $k$ large enough.
Furthermore, if $k$ is large enough then from (\ref{eq1.2}) and Lemma~\ref{lem2.6R} there
follows that
\begin{align*}
\frac{1}{n_k}\log\rho(A_{\sigma(n_k-1)}\dotsm A_{\sigma(0)})&=\lim_{\ell\to\infty}\frac{1}{\ell n_k}\log\|\left(A_{\sigma(n_k-1)}\dotsm A_{\sigma(0)}\right)^\ell\|\\
&\ge\liminf_{\ell\to\infty}\frac{1}{\ell n_k}\log\|\left(A_{\sigma(n_k-1)}\dotsm A_{\sigma(0)}\right)^\ell x\|\\
&\ge \lambda_1(\mu)-3\epsilon
\end{align*}
for any $x\in \bK(\sigma,1)$ with $\|x\|=1$. This implies that
\begin{equation}\label{eq2.13R}
\blambda(\sigma_k)\ge\blambda(\mu)-3\epsilon\quad \textrm{as }k\textrm{ large sufficiently},
\end{equation}
and
\begin{equation}\label{eq2.14R}
\sup_{n\ge1}\max_{w\in W_{\mathrm{per}}^n(\mathbb{A})}\rho(A_{w_n}\dotsm A_{w_1})\ge e^{\blambda(\mu)-3\epsilon}.
\end{equation}
Without loss of generality, we may assume, by the
Krylov-Bogolioubov theory~\cite[Chaper~6]{NS}, that $\sigma$
is a quasi-regular point of $\mu$; that is to say,
\begin{equation*}
\lim_{n\to\infty}\frac{1}{n}\sum_{\ell=0}^{n-1}\varphi(\sigma(\cdot+\ell))=\int_{\varSigma_{\mathbb{A}}}\varphi\,d\mu\quad\forall\varphi\in
\mathrm{C}(\varSigma_{\mathbb{A}},\mathbb{R}).
\end{equation*}
For the periodic switching law
$\sigma_k\in\varSigma_{\mathbb{A}}$ of period
$n_k$, there exists uniquely an $\theta$-ergodic probability
measure, write $\mu_{\sigma_k}$, such that
$$
\mu_{\sigma_k}(\{y\})=\frac{1}{n_k}\quad\forall y\in\left\{\sigma_k(\cdot+i)\,|\,0\le i\le n_k-1\right\}.
$$
By the approximation theorem of ergodic
measures~\cite[Lemma~3.8]{Dai-FM}, we can assume that
$\mu_{\sigma_k}\to\mu$ as $k\to\infty$ in
the sense of weak-$*$ topology, choosing a subsequence if necessary.
Then from the upper semicontinuity of the maximal Lyapunov exponents
with respect to ergodic measures (c.f.~\cite{Dai09}), it follows
that
\begin{equation}\label{eq2.15R}
\blambda(\sigma_k)\le\blambda(\mu)+3\epsilon
\end{equation}
as $k$ is large sufficiently.

Combining (\ref{eq2.13R}) and (\ref{eq2.15R}) yields the desired
statement, since $\epsilon$ is arbitrary.

This thus proves Theorem~\ref{thm2.1R}.
\end{proof}
%%%%%%%%%%%%%%%%%%%%%%%%%%%%%%%%%%%%%%%%%%%%%%%%%%%
\subsection{Proof of Theorems~\ref{thm1.4} and \ref{thm1.2}}%%%
\begin{proof}[Proof of Theorems~\ref{thm1.4}]
By Gel'fand's formula (\ref{eq1.2}) it is obvious that
\begin{equation*}
\limsup_{n\to\infty}\max_{w\in W_{\mathrm{per}}^n(\mathbb{A})}\sqrt[n]{\rho(A_{w_n}\dotsm A_{w_1})}\le\lim_{n\to\infty}\max_{\sigma\in \varSigma_{\mathbb{A}}^+}\sqrt[n]{\pmb{\|}A_{\sigma(n)}\dotsm A_{\sigma(1)}\pmb{\|}}=\lim_{n\to\infty}\max_{\sigma\in \varSigma_{\mathbb{A}}}\sqrt[n]{\pmb{\|}A_{\sigma(n-1)}\dotsm A_{\sigma(0)}\pmb{\|}}.
\end{equation*}
From \cite[Corollary~2.7]{Dai-LAA} it follows that there exists some $\mu\in\mathcal{M}_{\textit{erg}}(\theta)$ such that
\begin{equation*}
e^{\blambda(\mu)}=\lim_{n\to\infty}\max_{\sigma\in \varSigma_{\mathbb{A}}}\sqrt[n]{\pmb{\|}A_{\sigma(n-1)}\dotsm A_{\sigma(0)}\pmb{\|}}.
\end{equation*}
On the other hand, by Theorem~\ref{thm2.1R} (or (\ref{eq2.14R})) we can gain that
\begin{equation*}
\sup_{n\ge N}\max_{w\in W_{\mathrm{per}}^n(\mathbb{A})}\rho(A_{w_n}\dotsm A_{w_1})=\sup_{n\ge1}\max_{w\in W_{\mathrm{per}}^n(\mathbb{A})}\rho(A_{w_n}\dotsm A_{w_1})\ge e^{\blambda(\mu)}.
\end{equation*}
Therefore, the Gel'fand-Berger-Wang formula of MJLS holds in the situation of Theorem~\ref{thm1.4}.
\end{proof}

This therefore completes the proof of Theorem~\ref{thm1.2} as well.
%%%%%%%%%%%%%%%%%%%%%%%%%%%%%%%%%%%%%%%%%%%%%%%%%%
%%%%%%%%%%%%%%%%%%%%%%%%%%%%%%%%%%%%%%%%%%%%%%%%%%%
\section{Robust periodic stability implies uniform stability for random linear ODE}\label{sec3}%%%
This section will be devoted to proving Theorem~\ref{thm1.8} stated in Section~\ref{sec1.2} for random linear ODE that is driven by a topological semiflow with closing by periodic orbits property.
We shall first prove that under the robust periodic stability condition, there holds the quasi-contraction property described in Theorem~\ref{thm1.9}.
%%%%%%%%%%%%%%%%%%%%%%%%%%%%%%%%%%%%%%%%%%%%%%%%%%%%%%%%%%%%%%%%%%%%%%%%%%
\subsection{A perturbation lemma of Liao}
Now we will introduce a perturbation lemma due to
Liao~\cite{Liao-AMS}. Let us consider a linear differential equations of order $d$
\begin{equation*}
\frac{dy}{dt}=a(t)y,\quad(t,y)\in\mathbb{R}_+\times\mathbb{R}^d,
\end{equation*}
where the $d$-by-$d$ coefficient matrix $a(t)$
is continuous in $t\in\mathbb{R}_+$ such that
\begin{equation*}
{\sup}_{t\in\mathbb{R}_+}\pmb{\|}a(t)\pmb{\|}\le \ba_*<\infty.
\end{equation*}
By $y_a(t,y_0)$ we mean its solution with $y_a(0,y_0)=y_0$ for any
initial value $y_0\in\mathbb{R}^d$. Let $\Phi_a(t)$ be the standard fundamental matrix solution of the above linear ODE with $\Phi_a(0)=I_d$ and set
\begin{equation*}
\Phi_a(t,s)=\Phi_a(t)\circ\Phi_a^{-1}(s),\quad \forall s,t\ge0.
\end{equation*}

We will need the following fundamental perturbation lemma proved by S.-T.~Liao in~\cite{Liao-AMS}.

\begin{lemma}[{\cite{Liao-AMS} also \cite[Lemma~3.2]{Dai-JMSJ}}]\label{lem3.1R}%%%
Let $\varrho\in(0,1)$ and $0=t_0<t_1<\cdots<t_\ell=T<\infty$ and
$y_0,y_\natural\in\mathbb{R}^d$ such that
$\|y_0\|=1=\|y_a(T,y_\natural)\|$. If
\begin{equation*}
\|y_a(t_{\ell-1},y_\natural)\|=\pmb{\|}\Phi_a(t_{\ell-1},T)\pmb{\|}_{\min}
\end{equation*}
and
\begin{equation*}
t_k-t_{k-1}\ge\max\left\{\frac{16\ba_*\bar{T}}{\varrho},\, 2\lambda
\bar{T}+\frac{64}{\varrho}\log\frac{2}{\lambda_*},\,
\bar{T}+2\right\},\quad k=1,\ldots,\ell,
\end{equation*}
where
\begin{equation*}
\lambda=\frac{\varrho}{4\exp(2\ba_*)},\
\lambda_*=\frac{\lambda}{2}\exp(-\varrho/2)\quad \textrm{and}\quad
\bar{T}=\frac{32}{\lambda\varrho}\log\frac{32}{\lambda_*^2},
\end{equation*}
then there is a linear perturbed equation
\begin{equation*}
\frac{dy}{dt}=\left[a(t)+B_\natural(t)\right]y\qquad(t,y)\in\mathbb{R}_+\times\mathbb{R}^d
\end{equation*}
which satisfies the following two properties.
\begin{enumerate}
\item[$\mathrm{(i)}$] $B_\natural(t)$ is continuously differentiable in $t$
such that
\begin{equation*}
B_\natural(t)|_{\{0\}\cup[T-\frac{1}{8},\,\infty)}\equiv0\quad \textrm{and}\quad
{\sup}_{t\in\mathbb{R}_+}\pmb{\|}B_\natural(t)\pmb{\|}<\varrho.
\end{equation*}

\item[$\mathrm{(ii)}$] There exists a solution ${y}_\natural(t)$ to the perturbed equation such that ${y}_\natural(0)=y_0$ and
\begin{align*}
\frac{y_\natural(T)}{\|y_\natural(T)\|}&=y_a(T,{y}_\natural)\;\textrm{
or }=-y_a(T,{y}_\natural)\\
\intertext{and}\|y_\natural(t_k)\|&=\|y_\natural(t_{k-1})\|\cdot\pmb{\|}\Phi_a(t_k,t_{k-1})\pmb{\|}\quad \textrm{for }k=1,\ldots,\ell.\\
\intertext{Particularly,}
\|y_\natural(T)\|&=\prod_{k=1}^\ell\pmb{\|}\Phi_a(t_k,t_{k-1})\pmb{\|}.
\end{align*}
\end{enumerate}
\end{lemma}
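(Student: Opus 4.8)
This is Liao's perturbation (``inflation'') lemma; I sketch the shape of its proof from \cite{Liao-AMS}. A single trajectory of $\dot y=a(t)y$ can only achieve $\|y_a(T,y_0)\|\le\prod_{k}\pmb{\|}\Phi_a(t_k,t_{k-1})\pmb{\|}$, and typically the inequality is strict because the most-expanded directions of $\Phi_a$ over consecutive subintervals do not line up; the lemma says that an arbitrarily $\mathrm{C}^1$-small, suitably localized perturbation $B_\natural$ can \emph{realign} the trajectory so that the product is attained exactly on every subinterval. The proof is constructive, its mechanism being that reorienting the direction of a solution is cheap when a long time is available: the unperturbed flow shears directions at rates bounded by $2\ba_*$, so over an interval whose length is a suitable multiple $\bar T$ of $\exp(2\ba_*)/\varrho$ the reorientation needed costs a perturbation of sup-norm below $\varrho/2$. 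This is the source of the derived quantities $\lambda,\lambda_*,\bar T$ and of the three displayed lower bounds on the step lengths $t_k-t_{k-1}$.

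The core is a single-interval lemma: on $[s_0,s_1]$ with $s_1-s_0\ge\bar T+2$ and given unit vectors $u,v\in\mathbb{R}^d$, there is a $\mathrm{C}^1$ perturbation $B$ supported in the open interior of $[s_0,s_1]$, with $\sup_t\pmb{\|}B(t)\pmb{\|}<\varrho/2$, whose perturbed flow carries the direction $u$ at time $s_0$ to $\pm v$ at time $s_1$ and whose length growth $\|y(s_1)\|/\|y(s_0)\|$ can be set equal to any prescribed value within a fixed multiplicative window around $\pmb{\|}\Phi_a(s_1,s_0)\pmb{\|}$. To construct it one writes the induced ODE for the normalized solution $\xi(t)=y(t)/\|y(t)\|$ on the unit sphere --- a projected Riccati-type flow whose drift and Lipschitz constant are controlled by $\ba_*$ --- together with the scalar ODE for $\log\|y(t)\|$, and takes $B$ to be the sum of a rank-one, almost-tangential piece that makes $\xi$ move gently from $u$ at $s_0$ to $v$ at $s_1$, and a diagonal piece $\beta(t)I_d$ that adjusts $\log\|y\|$ without moving $\xi$, the two pieces carried on disjoint sub-windows and smoothly cut off at their ends. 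The Gronwall-type estimates of how $\xi$ and $\log\|y\|$ respond to $B$ are exactly what force $\bar T$ and the step-length thresholds to be as large as displayed.

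With the single-interval lemma in hand one assembles the global perturbation: on each $[t_{k-1},t_k]$ pick a unit vector realizing $\pmb{\|}\Phi_a(t_k,t_{k-1})\pmb{\|}$, apply the single-interval lemma to realign $y_\natural$ onto the relevant fastest direction and to pin the growth so that $\|y_\natural(t_k)\|=\|y_\natural(t_{k-1})\|\cdot\pmb{\|}\Phi_a(t_k,t_{k-1})\pmb{\|}$ exactly, and concatenate; the pieces have pairwise disjoint supports, so $B_\natural$ is a single $\mathrm{C}^1$ function with $\sup_t\pmb{\|}B_\natural(t)\pmb{\|}<\varrho$, vanishing at $0$ and at every $t_k$. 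The requirement $y_\natural(0)=y_0$ is met simply by starting the first window at a positive time. No perturbation is needed on the last subinterval, so $B_\natural\equiv0$ on $[T-\frac18,\infty)$: indeed the hypothesis $\|y_a(t_{\ell-1},y_\natural)\|=\pmb{\|}\Phi_a(t_{\ell-1},T)\pmb{\|}_{\min}=\pmb{\|}\Phi_a(T,t_{\ell-1})\pmb{\|}^{-1}$ together with $\|y_a(T,y_\natural)\|=1$ says precisely that $y_a(\cdot,y_\natural)$ already points along the most-expanded direction of $\Phi_a(T,t_{\ell-1})$, so once the steering in subinterval $\ell-1$ has aligned $y_\natural(t_{\ell-1})$ with $y_a(t_{\ell-1},y_\natural)$ one takes $y_\natural(t)$ on $[t_{\ell-1},T]$ to be the corresponding scalar multiple of $y_a(t,y_\natural)$; this yields $y_\natural(T)/\|y_\natural(T)\|=\pm y_a(T,y_\natural)$, the identity $\|y_\natural(T)\|=\|y_\natural(t_{\ell-1})\|\cdot\pmb{\|}\Phi_a(t_\ell,t_{\ell-1})\pmb{\|}$, and hence $\|y_\natural(T)\|=\prod_{k=1}^\ell\pmb{\|}\Phi_a(t_k,t_{k-1})\pmb{\|}$.

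The main obstacle, and the reason for the heavy constants, is the single-interval estimate: one must produce a reorientation of a bounded-but-not-small angle by a perturbation that is simultaneously $\mathrm{C}^1$, small in sup-norm (not merely in an $L^1$ sense), and localized away from $0$, from every $t_k$, and from $[T-\frac18,\infty)$, while also meeting the \emph{exact} multiplicative identities on every subinterval. This quantitative bookkeeping --- balancing the tangential (direction) and diagonal (length) pieces of $B$ against the $\le 2\ba_*$ shearing rates of $\Phi_a$ --- is the technical heart of \cite{Liao-AMS}; the packaging used here is that of \cite[Lemma~3.2]{Dai-JMSJ}.
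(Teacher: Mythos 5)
The paper does not prove this lemma --- it is imported wholesale from S.-T. Liao's work (reference \cite{Liao-AMS}) and from \cite[Lemma~3.2]{Dai-JMSJ}, and the text only remarks that the crucial point for the application is the identity $\|y_\natural(T)\|=\prod_{k=1}^\ell\pmb{\|}\Phi_a(t_k,t_{k-1})\pmb{\|}$. There is therefore no ``paper's own proof'' to compare your sketch against; what you have produced is a reconstruction of Liao's argument, not an alternative to anything in this manuscript.

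As a reconstruction, your high-level picture --- decompose $[0,T]$ into long subintervals, realign the solution onto the maximally-expanded direction of $\Phi_a$ at the start of each block with a small tangential perturbation while compensating the resulting norm drift with a scalar (dilation) piece $\beta(t)I_d$, and exploit the long block length so the realignment can be spread out thinly --- is the standard way Liao-type selection lemmas are argued, and the explanation of why the $\min$-norm hypothesis forces the trajectory to already sit on a most-expanded direction over $[t_{\ell-1},T]$ (via $\pmb{\|}\Phi_a(t_{\ell-1},T)\pmb{\|}_{\min}=\pmb{\|}\Phi_a(T,t_{\ell-1})\pmb{\|}^{-1}$) is correct. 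Two caveats. First, you conflate the initial-value condition $y_\natural(0)=y_0$ (merely a choice of initial datum for the perturbed ODE) with the constraint $B_\natural(0)=0$; only the latter is what forces the first perturbation window to start at positive time. Second, your claim that \emph{no} perturbation is needed on the entire last subinterval $[t_{\ell-1},T]$ is stronger than what the lemma actually asserts (vanishing only on $[T-\tfrac18,\infty)$); the weaker support condition suggests that in Liao's construction the perturbation may still act near the beginning of the final block before being cut off. Since none of this can be checked against the present paper, a genuine verification would have to go back to \cite{Liao-AMS} or \cite{Dai-JMSJ}.
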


In Lemma~\ref{lem3.1R}, the key point is that $\|y_\natural(T)\|=\prod_{k=1}^\ell\pmb{\|}\Phi_a(t_k,t_{k-1})\pmb{\|}$ which will implies the quasi contraction property in the following proof of Theorem~\ref{thm1.9}.
%%%%%%%%%%%%%%%%%%%%%%%%%%%%%%%%%%%%%%%%%%

\subsection{Nonuniform stability on periodic orbits}\label{sec3.2}%%%
Let $\varphi\colon\mathbb{R}_+\times W\rightarrow W;\, (t,w)\mapsto t{\bcdot}w$ be a continuous semiflow as in Section~\ref{sec1.2.1} of the Introduction. Recall that a point $w\in W$ is called a periodic point of $\varphi$ of period $\pi$ where $\pi>0$, if $w=\pi{\bcdot}w$. For a periodic point $w$, $\inf\{\pi>0;w=\pi{\bcdot}w\}$ is called the \textit{prime period} of $w$. Clearly, a periodic point $w$ is fixed (i.e. $w=t{\bcdot}w$ for all $t>0$) by $\varphi$ if and only if its prime period is $0$. By $\mathcal{P}(\varphi)$ we mean the set of all the periodic points of $\varphi$ including all fixed points of $\varphi$, as in Section~\ref{sec1.2.1}.

As in Section~\ref{sec1.2.2}, for $\bX,\bY,\bZ, \dotsc\in\mathrm{C}(W,\mathbb{R}^{d\times d})$, by $\mathscr{X}(t,w), \mathscr{Y}(t,w), \mathscr{Z}(t,w), \dotsc$ we mean the corresponding linear cocycles driven by the same semiflow $\varphi\colon(t,w)\mapsto t{\bcdot}w$.

For convenience we restate Theorem~\ref{thm1.9} as follows, which is the most key step toward proving Theorem~\ref{thm1.8} and which is motivated by \cite[Theorem~A]{Dai-JMSJ}.

\begin{theorem}\label{thm3.2R}%%%
If $\bX\in\mathrm{C}(W,\mathbb{R}^{d\times d})$ obeys the robust periodic stability property driven by $\varphi$, then there are
constants $\bta<0$ and $\bT>0$ such that:
if $w\in\mathcal {P}(\varphi)$ has the prime period
$\pi_{w}\ge\bT$ and
$0=t_0<t_1<\cdots<t_\ell=\pi_w$, $\ell\ge 1$,
is a subdivision of the interval $[0,\pi_w]$ satisfying $t_k-t_{k-1}\ge\bT$ for
$k=1,\ldots,\ell$, then
\begin{equation*}
\frac{1}{\pi_w}\sum_{k=1}^\ell\log\pmb{\|}\mathscr{X}(t_k-t_{k-1},{t_{k-1}}{\bcdot}w)\pmb{\|}\le\bta
\end{equation*}
and $\rho(\mathscr{X}(\pi_w,w))\le\exp(\pi_w\bta)$.
\end{theorem}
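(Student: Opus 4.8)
The plan is to argue by contradiction: suppose no such pair $(\bta,\bT)$ exists. Then for every $n\ge1$ there is a periodic point $w_n\in\mathcal{P}(\varphi)$ of prime period $\pi_n\ge n$ and a subdivision $0=t_0^{(n)}<t_1^{(n)}<\cdots<t_{\ell_n}^{(n)}=\pi_n$ with all gaps $t_k^{(n)}-t_{k-1}^{(n)}\ge n$ such that the averaged log-expansion $\frac1{\pi_n}\sum_{k}\log\pmb{\|}\mathscr{X}(t_k^{(n)}-t_{k-1}^{(n)},t_{k-1}^{(n)}\bcdot w_n)\pmb{\|}$ exceeds $-1/n$ (and the period grows). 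First I would fix $\varepsilon>0$ from the robust periodic stability hypothesis (Definition~\ref{def1.7}), and set $\ba_*=\max_{w\in W}\pmb{\|}\bX(w)\pmb{\|}$, which is finite by compactness of $W$ and continuity of $\bX$. Then I would apply Liao's perturbation lemma (Lemma~\ref{lem3.1R}) along the periodic orbit of $w_n$ with a fixed small $\varrho<\varepsilon$: this requires that the subdivision gaps exceed the threshold appearing in Lemma~\ref{lem3.1R}, which depends only on $\varrho$ and $\ba_*$ — so for $n$ large enough this is automatic since the gaps are $\ge n$.

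The crucial output of Lemma~\ref{lem3.1R} is property (ii): a perturbation $B_\natural(t)$, supported away from the endpoints of the time interval $[0,\pi_n]$, with $\sup_t\pmb{\|}B_\natural(t)\pmb{\|}<\varrho<\varepsilon$, such that the perturbed fundamental solution realizes $\|y_\natural(\pi_n)\|=\prod_{k=1}^{\ell_n}\pmb{\|}\Phi_{a}(t_k^{(n)},t_{k-1}^{(n)})\pmb{\|}$ — that is, the perturbed one-period growth is exactly the \emph{product of norms} over the subdivision, which dominates the norm of the true one-period transition. Since $B_\natural$ vanishes near the endpoints, the perturbed coefficient function $t\mapsto\bX(t\bcdot w_n)+B_\natural(t)$ is still $\pi_n$-periodic in $t$, so it defines a genuine $\varepsilon$-perturbation $\bY_n$ of $\bX$ along the periodic orbit $\tilde w_n=w_n$ (one may take $\bY_n$ continuous on $W$ by setting it equal to $\bX$ off a neighborhood of the orbit of $w_n$ and interpolating, using that $B_\natural$ is small and compactly supported in time; continuity on the compact orbit transfers to $W$ since the orbit is a circle). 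Then the perturbed period map satisfies $\pmb{\|}\mathscr{Y}_n(\pi_n,w_n)\pmb{\|}\ge\|y_\natural(\pi_n)\|\ge\exp\!\big(\sum_k\log\pmb{\|}\mathscr{X}(t_k^{(n)}-t_{k-1}^{(n)},t_{k-1}^{(n)}\bcdot w_n)\pmb{\|}\big)=\exp\!\big(\pi_n\cdot(\textrm{average})\big)>\exp(-\pi_n/n)$.

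From here I would derive the contradiction with robust periodic stability. The perturbed system $\dot y=\bY_n(t\bcdot w_n)y$ is linear periodic of period $\pi_n$, so its stability is governed by $\rho(\mathscr{Y}_n(\pi_n,w_n))$; but $\rho$ of a matrix is bounded below by $\pmb{\|}\cdot\pmb{\|}^{1/m}$ only after iterating, whereas we need a lower bound on $\rho$ itself. To handle this, note that for the periodic orbit the relevant quantity is $\rho(\mathscr{Y}_n(\pi_n,w_n))$, and by Gel'fand's formula $\rho(\mathscr{Y}_n(\pi_n,w_n))=\lim_{m}\pmb{\|}\mathscr{Y}_n(\pi_n,w_n)^m\pmb{\|}^{1/m}$; unfortunately $\pmb{\|}\mathscr{Y}_n(\pi_n,w_n)\pmb{\|}>\exp(-\pi_n/n)$ does not by itself force $\rho>1$. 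The fix is to improve Liao's construction to a \emph{dilation} estimate: since $\bX$ is robustly periodically stable, by an argument parallel to Lemma~\ref{lem1.3} one first upgrades the hypothesis to the statement that there is $\varepsilon'>0$ and $\beta<0$ with $\frac1{\pi}\log\rho(\mathscr{X}'(\pi,\tilde w))\le\beta$ for \emph{all} periodic $\tilde w$ of period $\pi$ and all $\varepsilon'$-perturbations $\bX'$ (using $\bX'=(1+\varepsilon'/2)\bX$ type dilations together with spectral-radius homogeneity $\rho(c\cdot\text{-})=|c|\,\rho(\text{-})$ along the cocycle). Then applying Lemma~\ref{lem3.1R} with $\varrho<\varepsilon'$ and feeding the output perturbation $\bY_n$ into this dilated bound yields $\frac1{\pi_n}\log\pmb{\|}\mathscr{Y}_n(\pi_n,w_n)\pmb{\|}\le\frac1{\pi_n}\log\big(d\cdot\rho(\mathscr{Y}_n(\pi_n,w_n))\cdot\textrm{(Jordan factor)}\big)$, which for $\pi_n\to\infty$ is $\le\beta/2<0$, contradicting $>-1/n$ once $n$ is large. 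Finally, the last assertion $\rho(\mathscr{X}(\pi_w,w))\le\exp(\pi_w\bta)$ follows by applying the just-proved inequality to the trivial subdivision $t_0=0<t_1=\pi_w$ when $\pi_w\ge\bT$ (and to a two-point or finer subdivision when needed), together with $\rho(\mathscr{X}(\pi_w,w))\le\pmb{\|}\mathscr{X}(\pi_w,w)\pmb{\|}$ and iteration of the period map (using $\rho(M)=\rho(M^k)^{1/k}\le\pmb{\|}M^k\pmb{\|}^{1/k}$ with the $k$-fold subdivision $0<\pi_w<2\pi_w<\cdots<k\pi_w$, each gap $\ge\bT$, letting $k\to\infty$).

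\textbf{Main obstacle.} The hard part is the bookkeeping in passing from Liao's single-orbit-segment perturbation to a genuine global $\varepsilon$-perturbation $\bY\in\mathrm{C}(W,\mathbb{R}^{d\times d})$ that is admissible in Definition~\ref{def1.7}, \emph{while simultaneously} extracting a lower bound not merely on the \emph{norm} of the perturbed period map but on its \emph{spectral radius}; the product-of-norms identity in Lemma~\ref{lem3.1R}(ii) controls norms, and bridging the gap to $\rho$ is exactly why the dilation upgrade (the analogue of Lemma~\ref{lem1.3}) must be invoked before, not after, the perturbation argument. Verifying that the numerical threshold conditions on the subdivision gaps in Lemma~\ref{lem3.1R} are met for all large $n$ is routine once $\ba_*$ and $\varrho$ are fixed, since those thresholds are constants independent of $\pi_n$ and the gaps diverge.
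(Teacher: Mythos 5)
Your strategy — Liao's perturbation lemma plus the robustness hypothesis — is exactly the paper's machinery, but the proposal has a genuine gap where you try to bridge from the norm of the perturbed period map to its spectral radius.

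The ``Jordan factor'' inequality you invoke, $\pmb{\|}M\pmb{\|}\le d\cdot\rho(M)\cdot(\textrm{Jordan factor})$, is false with any controllable factor: for $M=\left(\begin{smallmatrix}0&c\\0&0\end{smallmatrix}\right)$ one has $\rho(M)=0$ while $\pmb{\|}M\pmb{\|}$ is arbitrary, and in general the conditioning $\pmb{\|}P\pmb{\|}\pmb{\|}P^{-1}\pmb{\|}$ of the Jordan transform is unbounded over $\mathbb{R}^{d\times d}$. Since $M=\mathscr{Y}_n(\pi_n,w_n)$ varies with $n$, you cannot absorb this factor as $\pi_n\to\infty$. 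So the chain $\frac1{\pi_n}\log\pmb{\|}\mathscr{Y}_n\pmb{\|}\le\frac1{\pi_n}\log(d\cdot\rho\cdot\text{Jordan})\le\beta/2$ does not hold.

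What you have missed is that Lemma~\ref{lem3.1R}(ii) already delivers a \emph{spectral radius} lower bound for free, with no Jordan-form bookkeeping. The lemma produces a solution $y_\natural(t)$ of the perturbed equation with $y_\natural(0)=y_0$ and $y_\natural(T)=\pm\|y_\natural(T)\|\,y_a(T,y_\natural)$; with the paper's choice $y_0=\mathscr{X}(\pi_\w,\w)y_\natural$ (so that $y_a(T,y_\natural)=y_0$), this reads $y_\natural(\pi_\w)=\pm\|y_\natural(\pi_\w)\|\,y_0$. Thus $y_0$ is an actual eigenvector of the perturbed one-period transition $\mathscr{Y}(\pi_\w,\w)$, with eigenvalue $\pm\|y_\natural(\pi_\w)\|=\pm\varDelta$, giving $\rho(\mathscr{Y}(\pi_\w,\w))\ge\varDelta$ directly. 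Once you use this, your ``dilate first'' upgrade of Definition~\ref{def1.7} (the continuous-time analogue of Lemma~\ref{lem1.3}, which is itself sound: replace $\bX$ by $\bX+\frac{\varepsilon}{2}I_d$ and use $\rho(e^{c\pi}M)=e^{c\pi}\rho(M)$) does close the sequential contradiction: $e^{-\pi_n/n}<\varDelta\le\rho(\mathscr{Y}_n(\pi_n,w_n))\le e^{\beta\pi_n}$ with $\beta<0$, impossible for large $n$.

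For comparison, the paper's own route is tighter and avoids the dilation preprocessing entirely. It proves the bound directly for each periodic point rather than through a sequence; after applying Lemma~\ref{lem3.1R} it \emph{adds} a scalar $\alpha I_d$ with $\alpha=-\frac1{\pi_\w}\log\varDelta\in[0,\varrho/4)$ to the orbit-local perturbation (still within the $\varepsilon$-budget), which multiplies $y_\natural(t)$ by $e^{\alpha t}$ and lifts the eigenvalue of the period map to modulus exactly $\ge1$, contradicting $\rho<1$ from Definition~\ref{def1.7} without ever needing a uniform $\beta<0$. The paper also makes the orbit-local-to-global extension precise via Urysohn--Tietze, which you sketch but should state. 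Finally, your derivation of the last claim via iterated periods is correct but unnecessary: the one-term subdivision $t_0=0<t_1=\pi_w$ already gives $\pmb{\|}\mathscr{X}(\pi_w,w)\pmb{\|}\le e^{\pi_w\bta}$, and $\rho\le\pmb{\|}\cdot\pmb{\|}$ finishes it.
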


\begin{proof}
Let $\varepsilon>0$ be a constant such that if $\bY\in\mathrm{C}(W,\mathbb{R}^{d\times d})$ satisfies $\pmb{\|}\bX-\bY\pmb{\|}<\varepsilon$ then for any $w\in\mathcal{P}(\varphi)$ with period $\pi_w>0$, we have $\rho(\mathscr{Y}(\pi_w,w))<1$, where $\rho(\bcdot)$ denotes the spectral radius of a $d\times d$ matrix as in Section~\ref{sec1.1}.

For $w\in W$ and $T>0$, write $[0,T]{\bcdot}w=\{t{\bcdot}w\,|\, 0\le t\le T\}$, i.e., the closed orbit arc of $\varphi$ connecting $w$ and $T\bcdot w$.

Using the Urysohn-Tietze extension theorem, we can find a constant $\delta=\delta(\varepsilon)>0$ with $\delta<\varepsilon$ such that for any $T>0$ and any $w\in W$, if
\begin{equation*}
\bP\colon[0,T]{\bcdot}w\rightarrow\mathbb{R}^{d\times d}\quad \textrm{such that}\quad\pmb{\|}\bP(w^\prime)\pmb{\|}\le\delta\;\forall w^\prime\in[0,T]{\bcdot}w
\end{equation*}
is continuous on the closed orbit arc $[0,T]{\bcdot}w$, then there exists a $\bY\in\mathrm{C}(W,\mathbb{R}^{d\times d})$ such that
\begin{equation*}
\pmb{\|}\bX-\bY\pmb{\|}<\varepsilon\quad \textrm{and}\quad \bY(w^\prime)=\bX(w^\prime)+\bP(w^\prime)\;\forall w^\prime\in[0,T]{\bcdot}w.
\end{equation*}
And now we put
\begin{equation*}
\varrho=\frac{\min\{\delta,1\}}{4}\quad\textrm{and}\quad \ba_*=\max_{w\in W}\pmb{\|}\bX(w)\pmb{\|}.
\end{equation*}
Let the constants $\lambda, \lambda_*, \bar{T}$ and $\bT$ be defined as follows:
\begin{equation*}
\lambda=\frac{\varrho}{4\exp(2\ba_*)},\quad\lambda_*=\frac{\lambda}{2}\exp(-\varrho/2),\quad\bar{T}=\frac{32}{\lambda\varrho}\log\frac{32}{\lambda_*^2},
\end{equation*}
and
\begin{equation}\label{eq3.1R}
\bT=\max\left\{\frac{16\ba_*\bar{T}}{\varrho},\
2\lambda\bar{T}+\frac{64}{\varrho}\log\frac{2}{\lambda_*},\
\bar{T}+2\right\},
\end{equation}
It is easy to see that $\bT$ is independent of the choice of the periodic points $w\in\mathcal{P}(\varphi)$.

Let $\w\in\mathcal{P}(\varphi)$ be arbitrarily given with the large
prime period $\pi_{\w}\gg\bT$ and let
\begin{equation}\label{eq3.2R}
0=t_0<t_1<\cdots<t_\ell=\pi_{\w},\quad \textrm{where }\ell\ge 1,\quad t_k-t_{k-1}\ge\bT
\end{equation}
be an arbitrary subdivision of the interval $[0,\pi_{\w}]$. We claim that
\begin{subequations}\label{eq3.3R}
\begin{align}
&\frac{1}{\pi_{\w}}\sum_{k=1}^\ell\log\pmb{\|}\mathscr{X}(t_k-t_{k-1},{t_{k-1}}{\bcdot}\w)\pmb{\|}\le
-\frac{\varrho}{4},\\
\intertext{or equivalently,}&\varDelta:=\prod_{k=1}^\ell\pmb{\|}\mathscr{X}(t_k-t_{k-1},{t_{k-1}}{\bcdot}\w)\pmb{\|}\le\exp(-\frac{\varrho}{4}\pi_{\w}),
\end{align}\end{subequations}
which completes the proof of Theorem~\ref{thm3.2R}.

Indeed, to apply Lemma~\ref{lem3.1R}, we first take and then fix some vector $y_\natural\in\mathbb{R}^d$ such that
\begin{equation*}
\|\mathscr{X}(\pi_{\w},\w)y_\natural\|=1\quad\textrm{and}\quad\|\mathscr{X}(t_{\ell-1},\w)y_\natural\|=\inf_{x\in\mathbb{R}^d, \|\mathscr{X}(\pi_{\w},\w)x\|=1}\|\mathscr{X}(t_{\ell-1},\w)x\|.
\end{equation*}
Put
\begin{equation}\label{eq3.4R}%%%
y_0=\mathscr{X}(\pi_{\w},\w)y_\natural.
\end{equation}
Then by applying Lemma~\ref{lem3.1R} with $a(t)=\bX(t{\bcdot}\w)$ and $T=\pi_{\w}$, one can find a linear equation
\begin{equation}\label{eq3.5R}
\frac{dy}{dt}=\big{(}\bX(t{\bcdot}\w)+B_{\w}(t)\big{)}y\quad(t\in\mathbb{R}_+\textrm{ and } y\in\mathbb{R}^d)
\end{equation}
such that $B_{\w}(t)$ is continuous in $t$ and
\begin{equation}\label{eq3.6R}
B_{\w}(t)|_{\{0\}\cup[\pi_{\w}-\frac{1}{8},\infty)}\equiv0,\quad\sup_{t\ge0}\pmb{\|}B_{\w}(t)\pmb{\|}<\varrho.
\end{equation}
Note that by Lemma~\ref{lem3.1R} (\ref{eq3.5R}) has a solution $y_{\w}(t)$ such that
\begin{equation}\label{eq3.7R}
y_{\w}(0)=y_0\quad\textrm{and}\quad y_{\w}(\pi_{\w})=\|y_{\w}(\pi_{\w})\|y_0\;\textrm{ or }y_{\w}(\pi_{\w})=-\|y_{\w}(\pi_{\w})\|y_0,
\end{equation}
and
\begin{equation}\label{eq3.8R}
\|y_{\w}(\pi_{\w})\|=\prod_{k=1}^\ell\sup_{x\in\mathbb{R}^d,\|\mathscr{X}(t_{k-1},\w)x\|=1}\|\mathscr{X}(t_k,\w)x\|=\varDelta.
\end{equation}
Next we will prove (\ref{eq3.3R}) under condition (\ref{eq3.2R}).

On the contrary, assume (\ref{eq3.3R}) is not true. So $-\frac{1}{\pi_{\w}}\log\varDelta<\frac{\varrho}{4}$.  For any $0\le t\le \pi_{\w}$, we put
\begin{equation*}
\bP(t{\bcdot}\w)=B_{\w}(t)+ \alpha I_d,\quad \textrm{where }
\alpha:=\begin{cases}-\frac{1}{\pi_{\w}}\log\varDelta\in[0,{\varrho}/4{})& \textrm{if }\varDelta\le1,\\
0& \textrm{if }\varDelta>1.
\end{cases}
\end{equation*}
Since $\pmb{\|}\bP\pmb{\|}<2\varrho<\delta$, there exists a $\bY\in\mathrm{C}(W,\mathbb{R}^{d\times d})$ such that
\begin{equation*}
\pmb{\|}\bX-\bY\pmb{\|}<\varepsilon\quad \textrm{and}\quad \bY(t{\bcdot}\w)=\bX(t{\bcdot}\w)+\bP(t{\bcdot}\w)\;\forall t\in[0,\pi_{\w}],
\end{equation*}
and that
\begin{equation}
\hat{y}(t)=y_{\w}(t)\exp(\alpha t),\quad \textrm{where }0\le t\le\pi_{\w},
\end{equation}
is a solution of the induced equation
\begin{equation}
\frac{dy}{dt}=\bY(t{\bcdot}\w)y\quad(0\le t\le\pi_{\w}\textrm{ and } y\in\mathbb{R}^d).
\end{equation}
 Thus according to (\ref{eq3.8R}) and (\ref{eq3.7R}), we can obtain that
\begin{equation*}
\|y_{\w}(\pi_{\w})\|>\exp(-\varrho\pi_{\w}/4)\quad \textrm{and hence}\quad
\|\hat{y}(\pi_{\w})\|=\|y_0\|\textrm{ if }\varDelta\le 1\textrm{ and }\|\hat{y}(\pi_{\w})\|>\|y_0\|\textrm{ if }\varDelta>1.
\end{equation*}
Since $\hat{y}(0)=y_0$ and $\hat{y}(\pi_{\w})\in\left\{\pm y_0\|y_{\w}(\pi_{\w})\|\exp(\alpha\pi_{\w})\right\}$, we see that $\rho(\mathscr{Y}(\pi_{\w},\w))\ge1$, it is a contradiction to the robust periodic stability property of $\bX$.

This proves (\ref{eq3.3R}) and hence Theorem~\ref{thm3.2R} by letting $\bta=-\varrho/4$.
\end{proof}

\subsection{Uniform stability on periodic orbits with uniformly bounded periods}\label{sec3.3}%%%
We have considered the quasi stability of periodic points with large prime periods. We now will study the periodic points with small prime periods.

For any $T>0$, let $\mathcal{P}_T(\varphi)$ be the set of all periodic points whose prime periods are less than or equal to $T$.

Since $\mathcal{P}_T(\varphi)$ is a compact invariant set of $\theta$ and every ergodic probability measure in $\mathcal{P}_T(\varphi)$ is supported on a periodic orbit, the following lemma follows easily from the semi-uniform subadditive ergodic theorem (see, e.g.,~\cite{Sch, SS, Cao, Dai-Non}).

\begin{lemma}\label{lem3.3R}%%%
Let $\bX\in\mathrm{C}(W,\mathbb{R}^{d\times d})$ obey the robust periodic stability property driven by $\theta$. Then for any $T>0$, there are
constants $C>0$ and $0<\gamma<1$ such that
\begin{equation*}
\pmb{\|}\mathscr{X}(t,p)\pmb{\|}\le C\gamma^t\quad\forall t\ge0,
\end{equation*}
for every $p\in\mathcal{P}_T(\varphi)$.
\end{lemma}

In the differentiable dynamical systems case, $\mathcal{P}_T(\varphi)$ consists of at most finite number of periodic orbits. This is not the case under our situation.

%%%%%%%%%%%%%%%%%%%%%%%%%%%%%%%%%%%%%%%%%%%%%%%%%%%%%%%%
\subsection{An ergodic lemma}\label{sec3.4}%%%
Let $\mathscr{X}\colon\mathbb{R}_+\times W\rightarrow\mathrm{GL}(d,\mathbb{R})$ be the linear cocycle driven by $\varphi\colon(t,w)\mapsto t{\bcdot}w$, for an arbitrary $\bX\in\mathrm{C}(W,\mathbb{R}^{d\times d})$. For any $T>0$, we define a qualitative function
\begin{equation*}
\xi_T\colon W\rightarrow\mathbb{R};\quad w\mapsto\frac{1}{T}\log\pmb{\|}\mathscr{X}(T,w)\pmb{\|}.
\end{equation*}
It is a continuous function of $w\in W$, since $\varphi$ and $\bX$ both are continuous.

The following criterion for almost sure stability is useful for proving Theorem~\ref{thm1.8} later.

\begin{lemma}\label{lem3.4R}%%%
Let $\mu$ be an arbitrary ergodic probability measure of $\varphi$ on $W$. If
\begin{equation*}
\int_W\xi_T(w)\mu(dw)<0\quad \textrm{for some }T>0,
\end{equation*}
then $\bX$ at $\mu$ has a negative maximal Lyapunov exponent; i.e.,
\begin{equation*}
\lim_{t\to\infty}\frac{1}{t}\log\pmb{\|}\mathscr{X}(t,w)\pmb{\|}<0
\end{equation*}
for $\mu$-a.e. $w\in W$.
\end{lemma}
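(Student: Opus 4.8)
\emph{The plan} is to identify the maximal Lyapunov exponent $\blambda(\mu):=\lim_{t\to\infty}\frac1t\log\pmb{\|}\mathscr{X}(t,w)\pmb{\|}$ — which exists, is finite, and is $\mu$-a.e.\ constant — with the infimum over $t>0$ of the normalized integrals $\frac1t\int_W\log\pmb{\|}\mathscr{X}(t,w)\pmb{\|}\,\mu(dw)$, and then to observe that the hypothesis $\int_W\xi_T\,d\mu<0$ is precisely the statement that one such integral, at time $T$, is already negative. First I would record that $t\mapsto a(t,w):=\log\pmb{\|}\mathscr{X}(t,w)\pmb{\|}$ is a subadditive cocycle over $\varphi$: from $\mathscr{X}(s+t,w)=\mathscr{X}(t,s{\bcdot}w)\mathscr{X}(s,w)$ and submultiplicativity of the operator norm one gets $a(s+t,w)\le a(s,w)+a(t,s{\bcdot}w)$. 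Moreover $(t,w)\mapsto\mathscr{X}(t,w)$ is jointly continuous and, by a Gronwall estimate with $\ba_*=\max_{w\in W}\pmb{\|}\bX(w)\pmb{\|}$, one has $e^{-\ba_* t}\le\pmb{\|}\mathscr{X}(t,w)\pmb{\|}\le e^{\ba_* t}$; hence $\sup_{0\le s\le1,\,w\in W}|a(s,w)|<\infty$, so the integrability requirements of Kingman's (continuous-time) subadditive ergodic theorem are met.

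Then I would invoke Kingman's subadditive ergodic theorem for the subadditive cocycle $a$ and the ergodic measure $\mu$ of $\varphi$: for $\mu$-a.e.\ $w\in W$,
\begin{equation*}
\blambda(\mu)=\lim_{t\to\infty}\frac1t\log\pmb{\|}\mathscr{X}(t,w)\pmb{\|}=\inf_{t>0}\frac1t\int_W\log\pmb{\|}\mathscr{X}(t,w)\pmb{\|}\,\mu(dw).
\end{equation*}
Since the infimum is no larger than the value at $t=T$,
\begin{equation*}
\blambda(\mu)\le\frac1T\int_W\log\pmb{\|}\mathscr{X}(T,w)\pmb{\|}\,\mu(dw)=\int_W\xi_T(w)\,\mu(dw)<0
\end{equation*}
by hypothesis, so $\lim_{t\to\infty}\frac1t\log\pmb{\|}\mathscr{X}(t,w)\pmb{\|}=\blambda(\mu)<0$ for $\mu$-a.e.\ $w$, which is exactly the assertion.

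If one prefers to avoid the continuous-time version, the same conclusion follows by discretization: apply the discrete subadditive ergodic theorem to the time-$T$ map $g\colon w\mapsto T{\bcdot}w$ and the cocycle $w\mapsto\mathscr{X}(T,w)$ over $g$, producing a $g$-invariant limit $h\in L^1(\mu)$ with $\int_W h\,d\mu\le\int_W\log\pmb{\|}\mathscr{X}(T,w)\pmb{\|}\,\mu(dw)<0$; one then checks that $h$ is in fact $\varphi$-invariant, because for $0\le s\le T$ the matrices $\mathscr{X}(s,\cdot)$ and $\mathscr{X}(s,\cdot)^{-1}$ are uniformly bounded on $W$, so $\frac1n\log\pmb{\|}\mathscr{X}(nT,s{\bcdot}w)\pmb{\|}$ and $\frac1n\log\pmb{\|}\mathscr{X}(nT,w)\pmb{\|}$ share the same limit; ergodicity of $\mu$ then forces $h$ to equal the negative constant $\int_W h\,d\mu$, and interpolating $t=nT+s$ with the same uniform bound upgrades the limit along the subsequence $nT$ to the full limit as $t\to\infty$. \emph{The main obstacle} is really only the ergodic-theoretic identification in the middle step — that the pointwise exponent equals $\inf_{t>0}t^{-1}\int_W a(t,\cdot)\,d\mu$ — together with the routine but indispensable boundedness estimates needed to legitimately apply Kingman; in the discretized route the corresponding delicate point is verifying that the $g$-invariant limit is genuinely $\varphi$-invariant before invoking ergodicity.
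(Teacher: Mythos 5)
Your argument is correct, and both of your routes differ from the paper's in a substantive way. The paper first invokes the Oseledec theorem to know that the constant $\chi=\lim_{t\to\infty}\frac1t\log\pmb{\|}\mathscr{X}(t,w)\pmb{\|}$ exists $\mu$-a.e., then faces the nuisance that the time-$T$ map $f_T$ preserves $\mu$ but need not be ergodic; it works around this by applying the ergodic decomposition of $\mu$ into $f_T$-ergodic components $\mu_w$, extracting a positive-$\mu$-measure set of $w$ with $\int\xi_T\,d\mu_w<0$, and then running Birkhoff plus subadditivity on each such component to force $\chi<0$. Your primary argument bypasses all of this: because $a(t,w)=\log\pmb{\|}\mathscr{X}(t,w)\pmb{\|}$ is a subadditive cocycle over the $\varphi$-ergodic measure $\mu$, with the integrability verified by the Gronwall bound $e^{-\ba_*t}\le\pmb{\|}\mathscr{X}(t,w)\pmb{\|}\le e^{\ba_*t}$, the continuous-time Kingman theorem directly identifies the a.e.\ pointwise exponent with $\inf_{t>0}\frac1t\int a(t,\cdot)\,d\mu$, and the hypothesis simply says this infimum is beaten by the value at $t=T$. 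This is both shorter and conceptually cleaner than the paper's route, and it never needs the ergodic decomposition. Your discrete alternative is also sound: the one delicate point — that the $f_T$-invariant Kingman limit $h$ is genuinely $\varphi$-invariant — is correctly handled by the identity $\mathscr{X}(nT,s{\bcdot}w)=\mathscr{X}(s,(nT){\bcdot}w)\,\mathscr{X}(nT,w)\,\mathscr{X}(s,w)^{-1}$ together with the uniform boundedness of $\mathscr{X}(s,\cdot)^{\pm1}$ on $[0,T]\times W$, after which ergodicity of $\mu$ under $\varphi$ collapses $h$ to the negative constant, and the same uniform bound interpolates from times $nT$ to all $t$. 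In short: both of your approaches deliver the lemma; what they buy, relative to the paper, is that they replace Oseledec plus ergodic decomposition plus Birkhoff with a single application of the subadditive ergodic theorem, at the modest cost of either invoking its continuous-time form or supplying the short $\varphi$-invariance check.
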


\begin{proof}
According to the Oselede\v{c} multiplicative ergodic theorem, there exists a constant $\chi\in\mathbb{R}$ such that
\begin{equation*}
\chi=\lim_{t\to\infty}\frac{1}{t}\log\pmb{\|}\mathscr{X}(t,w)\pmb{\|}
\end{equation*}
for $\mu$-a.e. $w\in W$. Let
\begin{equation*}
f_T\colon W\rightarrow W;\quad w\mapsto T{\bcdot}w.
\end{equation*}
Then $f_T$ is a continuous transformation of $W$, which preserves $\mu$, but not necessarily ergodic.
From the ergodic decomposition theorem, it follows that there exists a family of $f_T$-ergodic probability measures $\{\mu_w\}_{w\in W}$ on the Borel measurable space $(W,\mathscr{B})$ such that
\begin{equation*}
\int_W\psi(w)\mu(dw)=\int_W\left(\int_W\psi(w^\prime)\mu_w(dw^\prime)\right)\mu(dw)\quad\forall \psi\in \mathscr{L}^1(W,\mathscr{B},\mu).
\end{equation*}
Since $\int_W\xi_T(w)\mu(dw)<0$ and $\xi_T\in \mathscr{L}^1(W,\mathscr{B},\mu)$, we can find a Borel set $\Lambda\subset W$ with $\mu(\Lambda)>0$ such that
\begin{equation*}
\int_W\xi_T(w^\prime)\mu_w(dw^\prime)<0\quad\forall w\in\Lambda.
\end{equation*}
Then from the Birkhoff ergodic theorem for $(W,f_T,\mu_w, \xi_T)$, it follows that for any $w\in\Lambda$,
\begin{equation*}
\lim_{n\to\infty}\frac{1}{nT}\log\pmb{\|}\mathscr{X}(nT,w^{\prime\prime})\pmb{\|}\le\lim_{n\to+\infty}\frac{1}{n}\sum_{k=0}^{n-1}\xi_T(f_T^k(w^{\prime\prime}))=\int_W\xi_T(w^\prime)\mu_w(dw^\prime)<0
\end{equation*}
for $\mu_w$-a.e. $w^{\prime\prime}\in W$. This implies that $\chi<0$, as desired. This proves Lemma~\ref{lem3.4R}.
\end{proof}
%%%%%%%%%%%%%%%%%%%%%%%%%%%%%%%%%%%%%%%%%%%%%%%%%%%%%%%%%%%%%%%%%%%%%%%%%%%%%%%%%%%%%%%%%%%%%
\subsection{Proof of Theorem~\ref{thm1.8}}%%%%
Now we are ready to complete the proof of Theorem~\ref{thm1.8} stated in Section~\ref{sec1.2.2}.

\begin{proof}[Proof of Theorem~\ref{thm1.8}]
In order to prove Theorem~\ref{thm1.8}, it is sufficient to show that for any ergodic probability measure $\mu$ of $\varphi$ on $W$, $\bX$ at $\mu$ has the negative maximal Lyapunov exponent. From now on, let $\mu$ be an arbitrary ergodic probability measure of $\varphi$ on $W$.

If $\mu$ is either atomic or supported on a periodic orbit of $\theta$, then the statement holds from Lemma~\ref{lem3.3R} and Theorem~\ref{thm3.2R}. So, there is no loss of generality in assuming that $\mathrm{supp}(\mu)$ is neither a point nor a periodic orbit of $\varphi$. We can then choose a sequence of periodic orbits $P_n$ with the prime periods $\pi_n$ such that as $n$ tends to $+\infty$,
\begin{equation*}
\pi_n\uparrow+\infty,\quad\mu_{P_n}^{}\xrightarrow{\textrm{in the weak-$*$ topology}}\mu,\quad\textrm{and}\quad P_n\xrightarrow{\textrm{in the Hausdorff metric}}\mathrm{supp}(\mu),
\end{equation*}
where $\mu_{P_n}^{}$ denotes the unique ergodic probability measure of $\varphi$ supported on the periodic orbit $P_n$ for all $n$, as stated in Section~\ref{sec1.2}.

Let the constants $\bta<0$ and $\bT>0$ be given by Theorem~\ref{thm3.2R} and $\xi_{\bT}\colon w\mapsto\frac{1}{\bT}\log\pmb{\|}\mathscr{X}(\bT,w)\pmb{\|}$. Define the continuous transformation $f_{\bT}\colon W\rightarrow W;\, w\mapsto{\bT}{\bcdot}w$. Then, $\mu$ and $\mu_{P_n}^{}, n=1,2,\dotsc$, all are invariant probability measures of $f_{\bT}$ on $W$, but not necessarily $f_{\bT}$-ergodic.
Now according to Lemma~\ref{lem3.4R}, to prove Theorem~\ref{thm1.8} we need only prove $\int_W\xi_{\bT}(w)\mu(dw)<0$.
Since it holds that
\begin{equation*}
\lim_{n\to+\infty}\int_W\xi_{\bT}(w)\mu_{P_n}^{}(dw)=\int_W\xi_{\bT}(w)\mu(dw),
\end{equation*}
it is sufficient to find some constant $\gamma<0$ such that
\begin{equation*}
\int_W\xi_{\bT}(w)\mu_{P_n}^{}(dw)\le\gamma
\end{equation*}
as $n$ is sufficiently large.
For that, we write the prime periods
\begin{equation*}
\pi_n=\ell_n\bT+\varDelta_n\quad \textrm{where }\varDelta_n=0\textrm{ or }\bT<\varDelta_n<2\bT,\quad n=1,2,\dotsc.
\end{equation*}
Then from Theorem~\ref{thm3.2R}, for any $p\in P_n$
\begin{equation*}
\frac{\bT}{\ell_n\bT+\varDelta_n}\sum_{k=0}^{\ell_n-1}\xi_{\bT}(f_{\bT}^k(p))+\frac{1}{\pi_n}\log\pmb{\|}\mathscr{X}(\varDelta_n,(\ell_n\bT){\bcdot}p)\pmb{\|}\le\bta
\end{equation*}
for all $n\ge1$. Since $\mathscr{X}(t,w)$ is jointly continuous in $(t,w)$ and both $[\bT,2\bT], W$ are compact, there exists an $N\ge1$ such that for any $n\ge N$,
\begin{equation}\label{eq3.11R}%%%
\frac{1}{\ell_n}\sum_{k=0}^{\ell_n-1}\xi_{\bT}(f_{\bT}^k(p))\le\frac{\bta}{2}\quad \forall p\in P_n.
\end{equation}
On the other hand, from the Birkhoff ergodic theorem, for every $n\ge1$ we have
\begin{equation*}
\lim_{m\to+\infty}\frac{1}{m}\sum_{k=0}^{m-1}\xi_{\bT}(f_{\bT}^k(p))=\bar{\xi}_{\bT}^{(n)}(p),\quad\mu_{P_n}^{}\textrm{-a.e. }p\in P_n;
\end{equation*}
and
\begin{equation}\label{eq3.12R}
\int_W\bar{\xi}_{\bT}^{(n)}d\mu_{P_n}^{}=\int_W\xi_{\bT}d\mu_{P_n}^{}.
\end{equation}
Therefore by (\ref{eq3.11R}), as $n\ge N$ we have
\begin{equation*}
\bar{\xi}_{\bT}^{(n)}(p)=\lim_{m\to+\infty}\frac{1}{m\ell_n}\sum_{k=0}^{m\ell_n-1}\xi_{\bT}(f_{\bT}^k(p))=\lim_{m\to+\infty}\frac{1}{m}\sum_{i=0}^{m-1}\left\{\frac{1}{\ell_n}\sum_{k=0}^{\ell_n-1}\xi_{\bT}(f_{\bT}^{k+i\ell_n}(p))\right\}\le\frac{\bta}{2}
\end{equation*}
for $\mu_{P_n}^{}$-a.e. $p\in P_n$; noting here that $f_{\bT}^{i\ell_n}(p)\in P_n$ for all $p\in P_n$ and any integer $i\ge0$ for $P_n$ is $\theta$-invariant. So by the  equality (\ref{eq3.12R}), we can obtain that
\begin{equation*}
\int_W\xi_{\bT}d\mu_{P_n}^{}\le\frac{\bta}{2}\quad \textrm{as }n\ge N, \quad\textrm{and then}\quad\int_W\xi_{\bT}d\mu<0.
\end{equation*}
This proves the statement of Theorem~\ref{thm1.8}.
\end{proof}

We note that the proof of Theorem~\ref{thm1.8} is a modification of that of Liao~\cite[Theorem~3.1]{Liao-CAM}. Since $\bX$ does not need to be H\"{o}lder continuous and the closing by periodic orbits property of $\varphi$ is not necessarily exponential as in \cite{Dai-FM,Kal}, there is no a general approximation theorem of Lyapunov exponents as \cite[Theorem~1.3]{Dai-FM} and \cite{Kal} applicable here. So, the proof of Theorem~\ref{thm1.8} presented here is itself of interest.

%%%%%%%%%%%%%%%%%%%%%%%%%%%%%%%%%%%%%%%%%%%%%%%%%%%%%%%%%%%%%%%%%
%%%%%%%%%%%%%%%%%%%%%%%%%%%%%%%%%%%%%%%%%%%%%%%%%%%%%%%%%%%%%%%%%
\section{Stability of linear cocycles driven by irrational rotations}\label{sec4}%%%
In this section, we will study linear cocycles driven by an irrational rotation on $\mathbb{T}^1$. For our convenience, we introduce a metric on the product topological space $\mathbb{R}\times\mathbb{T}^1$ as follows: for any $z=(y,e^{2\pi\mathfrak{i}x})\in\mathbb{R}\times\mathbb{T}^1$ and $z^\prime=(y^\prime,e^{2\pi\mathfrak{i}x^\prime})\in\mathbb{R}\times\mathbb{T}^1$ where $x,x^\prime[0,1)$, set $d(z,z^\prime)=|y-y^\prime|+|x-x^\prime|$ mod 1. It is easy to see that $d(\cdot,\cdot)$ is a metric compatible with the product topology of $\mathbb{R}\times\mathbb{T}^1$.

\subsection{Proof of Theorem~\ref{thm1.11}}\label{sec4.1}%%%

We will prove Theorem~\ref{thm1.11} stated in Section~\ref{sec1.3} using the quasi contraction lemma (i.e. Theorem~\ref{thm1.9}) and Theorem~\ref{thm1.8} proved in Section~\ref{sec3}.

\begin{proof}[Proof of Theorem~\ref{thm1.11}]
Let $\omega\in(0,1)$ be an irrational number and $\bS\in\mathrm{C}(\mathbb{T}^1,\mathrm{GL}(d,\mathbb{R}))$ given as in Theorem~\ref{thm1.11}. For $\omega$, from Dirichlet's theorem we choose a sequence of positive integer pairs $(p_n,q_n)_{n\ge1}$ such that $0<p_n/q_n<1$ for all $n\ge1$.

Since $p_n/q_n\to\omega$ as $n\to\infty$ and $[0,1]\times\mathbb{T}^1$ is compact under the standard product topology, we see that
\begin{equation*}
W:=\left(\{p_n/q_n\colon n\ge1\}\cup\{\omega\}\right)\times\mathbb{T}^1,
\end{equation*}
as a closed subspace of $[0,1]\times\mathbb{T}^1$, is also a compact metric space. We simply write $\mathbb{T}_y^1=\{y\}\times\mathbb{T}^1$ for all $y\in[0,1]$.
We now extend $\bS$ from $\mathbb{T}_\omega^1$ onto $W$ as follows:
\begin{equation*}
\widetilde{\bS}\colon W\rightarrow\mathrm{GL}(d,\mathbb{R})\quad \textrm{by }(y,z)\mapsto S(z)\;\forall (y,z)\in W.
\end{equation*}
Moreover, we define the homeomorphism from $W$ onto itself
\begin{equation*}
T\colon W\rightarrow W\quad \textrm{by }T(y,z)=(y,R_{y}(z))\;\forall (y,z)\in W,
\end{equation*}
where $R_y\colon \mathbb{T}_y^1\rightarrow\mathbb{T}_y^1$ is as in (\ref{eq1.9}) with $y$ instead of $x$. For any $z\in\mathbb{T}^1$, since
\begin{equation*}
\big{|}\omega-\frac{p_n}{q_n}\big{|}<\frac{1}{q_n^2}\quad\textrm{and}\quad T^{q_n}(\frac{p_n}{q_n},z)=(\frac{p_n}{q_n},z)\quad\forall n\ge1,
\end{equation*}
we can obtain that
\begin{equation*}
\big{|}T^k(\omega,z)-T^k(\frac{p_n}{q_n},z)\big{|}<\frac{1}{q_n}\cdot|z|\quad\forall k=0,1,\dotsc,q_n-1.
\end{equation*}
This implies that $(W,T)$ has the closing by periodic orbits property (cf.~Definition~\ref{def1.12}).

On the other hand, from $\widetilde{\bS}$ there is a naturally defined linear cocycle
\begin{equation*}
\widetilde{\mathscr{S}}\colon \mathbb{N}\times W\rightarrow\mathrm{GL}(d,\mathbb{R});\quad (n,(y,z))\mapsto S(R_{y}^{n-1}(z))\dotsm S(R_y(z))S(z)
\end{equation*}
driven by $T$. We note that although the robust periodic stability of $\bS$ on $\mathbb{T}^1$ defined by Definition~\ref{def1.10} cannot induce the robust periodic stability of $\widetilde{\bS}$ on $W$ defined by discretization of Definition~\ref{def1.7}, yet from the product structure of $W$, the distribution of periodic orbits of $T$ and from the proof of Theorem~\ref{thm3.2R}, it follows that $\widetilde{\mathscr{S}}$ has the quasi contraction property on the periodic points of $T$.

Therefore, $\widetilde{\mathscr{S}}$ driven by $T$ is uniformly exponentially stable from Theorem~\ref{thm1.8}. This implies that $\mathscr{S}_\omega$ driven by $R_\omega$ is uniformly exponentially stable too, as desired.

This completes the proof of Theorem~\ref{thm1.11}.
\end{proof}

We note that since $p_n/q_n\to\omega$ as $n\to\infty$, $\{p_n/q_n\,|\,n\ge1\}\cup\{\omega\}$ is a compact subset of $\mathbb{R}$ and $W$ is compact. However, in the definition of the driving dynamical system $T$, $p_n/q_n$ and $\omega$ all play the role of the rotation numbers. The same points should be noted in the proof of Theorem~\ref{thm4.1} later.
%%%%%%%%%%%%%%%%%%%%%%%%%%%%%%%%%%%%%%%%%%%%%%%%%%%%%%%%%%%%%%%%%%%%%%%%
%%%%%%%%%%%%%%%%%%%%%%%%%%%%%%%%%%%%%%%%%%%%%%%%%%%%%%%%%%%%%%%%%%%%%%%%
\subsection{Complete periodic stability does not imply the uniform stability}\label{sec4.2}%%%
Now we will construct examples which show that the complete periodic stability condition (\ref{eq1.3}) does not need to imply the unform stability even in the $1$-dimensional case in our context.

\begin{theorem}\label{thm4.1}%%%
Let $\omega\in(0,1)$ be an irrational number and $\gamma$ an arbitrary constant with $0<\gamma<1$. Let $\{p_n/q_n\}_{n\ge1}$ be a sequence of rational numbers such that $q_n\uparrow\infty$ and $|\omega-\frac{p_n}{q_n}|<\frac{1}{q_n^2}$.
Define a continuous $1\times 1$ matrix-valued function
\begin{equation*}
\bS\colon W:=\left(\{p_n/q_n\,|\,n=1,2,\dotsc\}\cup\{\omega\}\right)\times\mathbb{T}^1\rightarrow(0,1]\subset\mathbb{R}^{1\times 1}
\end{equation*}
by
\begin{equation*}
\bS|\{p_n/q_n\}\times\mathbb{T}^1\equiv\gamma^{1/q_n}\quad\textrm{and}\quad\bS|\{\omega\}\times\mathbb{T}^1\equiv1.
\end{equation*}
Then, driven by $T\colon (y,z)\mapsto (y,R_y(z))$ which has the closing by periodic orbits property, $\bS$ is completely periodic stable, but not uniformly stable.
\end{theorem}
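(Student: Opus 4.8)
The plan is to verify, for the scalar cocycle $\mathscr{S}$ generated by $\bS$ over $T$, three things: that $\bS\in\mathrm{C}(W,\mathbb{R}^{1\times1})$ and $T$ has the asserted closing property; that $\mathscr{S}$ is completely periodically stable in the sense of $(\ref{eq1.3})$; and that $\mathscr{S}$ is not uniformly exponentially stable. The decisive structural fact, which I would record at the outset, is that $T$ preserves every level $\{y\}\times\mathbb{T}^1$ and $\bS$ is constant on each level, so that
\begin{equation*}
\mathscr{S}(n,(y,z))=s(y)^{n}\qquad(n\ge1,\ z\in\mathbb{T}^1),\qquad\text{where } s(p_m/q_m)=\gamma^{1/q_m},\ \ s(\omega)=1 .
\end{equation*}
Throughout I would assume, as in Dirichlet's theorem, that each $p_m/q_m$ is in lowest terms; this is what makes the constant in $(\ref{eq1.3})$ below \emph{uniform}.

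For the first point I would note that, since $q_m\uparrow\infty$ and $\omega$ is irrational, the $p_m/q_m$ are pairwise distinct rationals whose only accumulation point in $W$ is $\omega$; hence each level $\{p_m/q_m\}\times\mathbb{T}^1$ is relatively open in $W$, so $\bS$ is locally constant there, while continuity at the level $\{\omega\}\times\mathbb{T}^1$ follows from $\gamma^{1/q_m}\to\gamma^{0}=1$; and since $s(y)\in(0,1]$ for every level, $\bS$ is a continuous $\mathbb{R}^{1\times1}$-valued (indeed $\mathrm{GL}(1,\mathbb{R})$-valued) function. For the closing by periodic orbits property of $T$ (Definition~\ref{def1.12}) I would invoke the verification already carried out in the proof of Theorem~\ref{thm1.11}: the level $p_m/q_m$ carries the $q_m$-periodic orbits of $T$, and for $0\le k\le q_m$ one has $d\!\left(T^{k}(\omega,z),T^{k}(p_m/q_m,z)\right)\le(k+1)\,|\omega-p_m/q_m|<(k+1)/q_m^{2}$, which via Dirichlet's theorem yields periodic orbits shadowing recurrent orbit arcs near the level $\omega$.

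For the second point I would observe that $(y,z)$ is fixed by $T^{N}$ exactly when $e^{2\pi\mathfrak{i}Ny}=1$, i.e. $Ny\in\mathbb{Z}$, which for $y=\omega$ never occurs and for $y=p_m/q_m$ in lowest terms occurs precisely when $q_m\mid N$. Thus the periodic points of $T$ are exactly the points of the rational levels, each of prime period $q_m$, and for every period $N=kq_m$,
\begin{equation*}
\rho\!\big(\mathscr{S}(N,(p_m/q_m,z))\big)=\big(\gamma^{1/q_m}\big)^{kq_m}=\gamma^{k}\le\gamma<1 ,
\end{equation*}
so $(\ref{eq1.3})$ holds with the single constant $\bgamma=\gamma$: $\bS$ is completely periodically stable. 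For the third point I would use that on the level $y=\omega$ one has $\pmb{\|}\mathscr{S}(n,(\omega,z))\pmb{\|}=1$ for all $n\ge1$, whence $\brho(\bS,T)=1$; in particular no constants $C>0$, $0<c<1$ can satisfy $\pmb{\|}\mathscr{S}(n,w)\pmb{\|}\le Cc^{n}$ for all $n\ge1$ and $w\in W$, so $\bS$ is not uniformly exponentially stable. Assembling these three points completes the proof.

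I do not expect a genuine obstacle: once the level-wise structure is recorded, everything reduces to elementary estimates. The single point that has to be handled with care is the interplay that the construction is designed to exploit — continuity of $\bS$ across the level $\omega$ forces the per-step contraction rate $s(p_m/q_m)=\gamma^{1/q_m}$ to degenerate to $1$, yet, because the value is tuned to $\gamma^{1/q_m}$ on the $q_m$-periodic level (and the $p_m/q_m$ are irreducible), the contraction over one full period is exactly $\gamma$, uniformly, so $(\ref{eq1.3})$ survives with one constant. This degeneration, together with the non-contracting level $\omega$ which carries no periodic point, is what destroys uniform exponential stability while leaving complete periodic stability intact.
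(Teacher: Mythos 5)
Your proposal is correct and follows essentially the same route as the paper: exploit that $T$ preserves the levels $\{y\}\times\mathbb{T}^1$ and $\bS$ is constant on each, so the cocycle is $s(y)^n$, the periodic points live precisely on the rational levels with prime period $q_m$ and spectral radius $\gamma$ per period, while the $\omega$-level has Lyapunov exponent $0$. The paper compresses these observations to two sentences and appends a citation of Theorem~\ref{thm1.11} (whose proof contains the closing-property verification you invoke); you simply spell out the continuity of $\bS$, the classification of periodic points via $Ny\in\mathbb{Z}$, and the uniform bound $\rho(\mathscr{S}(kq_m,\cdot))=\gamma^k\le\gamma$, all of which the paper leaves implicit.
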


\begin{proof}
Indeed, for any periodic point $(p_n/q_n,z)\in W$ of $T$, its period is $q_n$ under the iteration of $T$ and the induced linear cocycle has the spectral radius $\gamma$ over this corresponding periodic orbit. However, the induced linear cocycle has Lyapunov exponent zero at every aperiodic point $(\omega,z)\in W$ of $T$. This completes the proof of Theorem~\ref{thm4.1} from Theorem~\ref{thm1.11}.
\end{proof}

For a continuous transformation $T\colon W\rightarrow W$ on a compact metric space $(W,\bd)$, a point $w\in W$ is said to be \textit{nonwandering} with respect to $T$ if for every neighborhood $U$ of $w$, there exists an $n\ge1$ with $U\cap T^{-n}U\not=\varnothing$. Equivalently, a point $w$ is nonwandering if and only if for any $\varepsilon>0$ there is a point $y\in W$ and an $n\ge1$ such that $\bd(w,y)<\varepsilon$ and $\bd(w,T^n(y))<\varepsilon$. The set of all nonwandering points of $T$ is called the \textit{nonwandering set} of $T$ and denoted by $\Omega_T(W)$. Clearly, $\Omega_T(W)$ is an $T$-invariant closed subset of $W$ such that $\mu(\Omega_T(W))=1$ for every $T$-invariant probability measure $\mu$ on $W$. Hence from \cite[Corollary~2.7]{Dai-LAA}, it follows that for any continuous matrix-valued function
$\bC\colon W\rightarrow \mathbb{R}^{d\times d}$ by $w\mapsto C(w)$,
the joint spectral radius $\brho(\bC,T)$ defined as in Definition~\ref{def1.13} is such that
\begin{equation*}
\brho(\bC,T)=\lim_{n\to\infty}\max_{w\in \Omega_T(W)}\sqrt[n]{\pmb{\|}\mathscr{C}(n,w)\pmb{\|}}=\inf_{n\ge1}\max_{w\in\Omega_T(W)}\sqrt[n]{\pmb{\|}\mathscr{C}(n,w)\pmb{\|}}.
\end{equation*}
It is easy to check that if $T$ satisfies the closing by periodic orbits property described as in Definition~\ref{def1.12}, then the periodic points of $T$ are dense in $\Omega_T(W)$. By $W_{\textrm{per}}^n(T)$ we mean the set of all periodic points of period $n$ for all $n\ge1$. Write $W_\textrm{per}(T)=\bigsqcup_{n\ge1}W_\textrm{per}^n(T)$.

\begin{remark}\label{rem4.2}%%%
Theorem~\ref{thm4.1} not only may be served as a counterexample to the spectral finiteness mentioned in Section~\ref{sec1.4}, but also resists \cite[Question~3]{Dai-LAA}. However, Theorem~\ref{thm4.1} is not a counterexample to \cite[Question~3]{Dai-LAA}, since the complete periodic stability condition, that is described as
\begin{equation*}
\exists\,\bgamma<1\textrm{ such that }\rho(C(T^{n-1}w)\dotsm C(w))\le\bgamma\;\forall w\in W_\textrm{per}^n(T)\textrm{ and }n\ge1,
\end{equation*}
is weaker than the following condition which is required by \cite[Question~3]{Dai-LAA}:
\begin{equation*}
\exists\,\bgamma<1\textrm{ and }N\ge1\textrm{ such that }\rho(C(T^{n-1}w)\dotsm C(w))\le\bgamma\;\forall w\in W_\textrm{per}(T)\textrm{ and }n\ge N;
\end{equation*}
since the period of $w\in W_{\textrm{per}}(T)$ is not necessarily equal to $n$.
\end{remark}
%%%%%%%%%%%%%%%%%%%%%%%%%%%%%%%%%%%%%%%%%%%%%%%%%%%%%%%%%%%%%%%%%

\section{Concluding remarks}%%%
We have mainly proven that for a linear cocycle driven by a dynamical system having the closing by periodic orbits property, it is uniformly exponentially stable if and only if it is robustly periodically stable (Theorems~\ref{thm1.2}, \ref{thm1.8} and \ref{thm1.11}). We note here that we have not imposed any additional conditions on the Markov transition matrix $\bP$ for the Markovian chain $\bxi$ which is equivalent to the MJLS (\ref{eq1.1}). If $\bP$ is irreducible and aperiodic (i.e. there is some $N>0$ such that $\bP^N$ is strictly positive), then the $N$-fold iteration of the finite-type subshift $\theta_{\mathbb{A}}^+$ is equivalent a full shift and further the complete periodic stability condition (\ref{eq1.3}) implies the uniform stability of the MJLS (\ref{eq1.1}). Hence our results and methods of proof should be useful for the stability analysis of Markovian jump linear systems.

In the classical case of the stability analysis of switched linear dynamical system, a powerful tool is the Elsner reduction theorem; in other words, for any irreducible
$\A=\{A_1,\dotsc,A_K\}\subset\mathbb{R}^{d\times d}$ with the joint spectral radius $\brho(\A)>0$, there always exists a vector norm $\|\cdot\|_*$ on $\mathbb{R}^d$ such that
$$
\|A_{i_n}\dotsm A_{i_1}x\|_*\le\brho(\A)^n\|x\|_*,\quad\forall x\in\mathbb{R}^d, n\ge1\textrm{ and }(i_1,\dotsc,i_n)\in\{1,\dotsc,K\}^n.
$$
However, this is not the case in our situation. To get around those obstruction points that are overcame by Elsner's reduction theorem in the classical case, we have employed Shantao Liao's methods established for the theory of differentiable dynamical systems and proved a Gel'fand-Berger-Wang formula of MJLS using approximation of Lyapunov exponents by periodic orbits.

In addition, counterexamples have been constructed to the robustness condition (Theorem~\ref{thm4.1}) and to the spectral finiteness of linear cocycle (Theorem~\ref{thm1.14}).

\section*{\textbf{Acknowledgments}}%
The author would like to thank Professors Victor~Kozyakin, Yu~Huang and Mingqing~Xiao for some helpful discussion. Particularly he is grateful to the anonymous reviewers for their comments.

Project was supported partly by National Natural Science Foundation of China (Nos. 11071112 and 11271183) and PAPD of Jiangsu Higher Education Institutions.
%%%%%%%%%%%%%%%%%%%%%%%%%%%%%%%%%%%%%%%%%%%%%%%%%%%%%%%%%%%%%%%%%%%%%%%%%%%%%%%%%%%%%%
\bibliographystyle{amsplain}

\end{document}